\theoremstyle{plain} 
\newtheorem{theorem}{\indent\sc Theorem}[section]
\newtheorem{lemma}[theorem]{\indent\sc Lemma}
\newtheorem{proposition}[theorem]{\indent\sc Proposition}
\theoremstyle{definition} 
\newtheorem{definition}[theorem]{\indent\sc Definition}
\newtheorem{remark}[theorem]{\indent\sc Remark}
\newcommand{\overbar}[1]{\mkern 1.5mu\overline{\mkern-1.5mu#1\mkern-1.5mu}\mkern 1.5mu}
\begin{document}

\title[Classical weight one forms in Hida families]{Classical weight one forms in Hida families: \\ Hilbert modular case} 

\author[T. Ozawa]{Tomomi Ozawa} 

\subjclass[2010]{ 
Primary 11F80; Secondary 11F33, 11R23. 
}
%
\keywords{ 
Hilbert modular forms, Hida families, Modular Galois representations. 
}
\thanks{ 
$^*$ The author is supported by JSPS, Grant-in-Aid for Scientific Research for JSPS fellows (15J00944). }
\address{
Mathematical Institute, Graduate School of Science, Tohoku University \endgraf
6-3, Aramaki Aza-Aoba, Aoba-ku, Sendai 980-8578 JAPAN
}
\email{sb2m06@math.tohoku.ac.jp}

\maketitle

\begin{abstract}
In this paper, we investigate the number of classical weight one specializations of a non-CM ordinary Hida family of parallel weight Hilbert cusp forms. We give an explicit upper bound on the number of such specializations. 
\end{abstract}

\section{Introduction}
\label{sec:intro}
The purpose of this article is to generalize the first part of Dimitrov and Ghate's paper \cite{DG} to the case of parallel weight Hilbert modular forms. Let $F$ be a totally real field and $p$ an odd prime. We consider a primitive $p$-ordinary Hida family ${\mathcal F}$ of parallel weight Hilbert cusp forms defined over $F$. It is well-known that a specialization of ${\mathcal F}$ at any arithmetic points of weight at least two is a classical (holomorphic) Hilbert cusp form. Contrary to the higher weight specializations, it is still mysterious to know about weight one specializations. In the case of $F={\mathbb{Q}}$, Ghate and Vatsal \cite{GV} proved that such a Hida family admits infinitely many classical weight one specializations if and only if it is of CM type (namely, constructed from a Hecke character of a imaginary quadratic field). Further the number of such forms inside a non-CM family is bound by an explicit constant due to Dimitrov and Ghate (\cite{DG}). The former result in the specialization was generalized to the case of totally real fields by Balasubramanyam, Ghate and Vatsal (\cite{BGV}). Motivated by these works, in this paper, we pursue a generalization of the results of Dimitrov and Ghate. Namely, for a non-CM primitive $p$-ordinary Hida family ${\mathcal F}$, we give an explicit estimate on the number of classical weight one specializations of ${\mathcal F}$. 

We now explain the details of the main results. Let $G_F$ be the absolute Galois group ${\rm Gal}(\overbar{F}/F)$ of a totally real field $F$ and ${\mathfrak n}_0$ an integral ideal of $F$ prime to $p$. Let ${\mathcal F}$ be a primitive $p$-ordinary cuspidal Hida family of tame level ${\mathfrak n}_0$, $\rho_{\mathcal F}: G_F \rightarrow GL_2({\rm Frac}(\Lambda_L))$ the big Galois representation attached to ${\mathcal F}$ where $\Lambda_L$ is a finite integral extension of the Iwasawa algebra $\Lambda \cong {\mathcal O}[[X]]$ (\cite{W88}). We take a $G_F$-stable $\Lambda_L$-lattice ${\mathcal L}$ in ${\rm Frac}(\Lambda_L)^2$ and consider the reduction $\bar{\rho}_{\mathcal F}: G_F \rightarrow GL_2({\mathbb{F}})$ of $\rho_{\mathcal F}$ modulo the maximal ideal of $\Lambda_L$. 
The reduction $\bar{\rho}_{\mathcal F}$ may depend on the choice of a lattice ${\mathcal L}$ but its semi-simplification $\bar{\rho}_F^{\rm ss}: G_F \rightarrow GL_2({\mathbb{F}})$ does not (Section \ref{sec:103}). Due to Ohta \cite{O84} and Rogawski-Tunnell \cite{RT}, any classical weight one Hilbert cuspidal eigenform $f$ gives rise to an irreducible totally odd Artin representation $\rho_f: G_F \rightarrow GL_2({\mathbb{C}})$. The image of $\rho_f$ in $PGL_2({\mathbb{C}})$ is either
\begin{itemize}
\item dihedral, namely a dihedral group $D_{2n}$ of order $2n$ for some integer $n \geq 1$; or
\item exceptional, namely the symmetric group $S_4$, or the alternative groups $A_4$ or $A_5$. 
\end{itemize}
It will be revealed in Section \ref{sec:113} that if ${\mathcal F}$ admits a classical weight one specialization $f$, then the image of $\bar{\rho}_{\mathcal F}^{\rm ss}$ in $PGL_2({\mathbb{F}}$) has to be of the same type as that of $\rho_f$, and hence $\bar{\rho}_{\mathcal F}=\bar{\rho}_{\mathcal F}^{\rm ss}$ is irreducible and unique up to isomorphism. If ${\mathcal F}$ is of CM type then the image of $\bar{\rho}_{\mathcal F}$ in $PGL_2({\mathbb{F}})$ is dihedral. According to the above classification, we will distinguish the arguments for each case. 
\subsection*{Dihedral case}
\label{sec:di}
Suppose that ${\mathcal F}$ satisfies the following properties: 
\begin{itemize}
\item[(P1)] ${\mathcal F}$ is residually of dihedral type: namely, there exists a quadratic extension $K$ of $F$ such that $\bar{\rho}_{\mathcal F} \cong {\rm Ind}_K^F(\bar{\varphi})$ for some character $\bar{\varphi}: G_K={\rm Gal}(\bar{F}/K) \rightarrow {\mathbb{F}}^\times$; 
\item[(P2)] ${\mathcal F}$ has a classical weight one specialization $f$ such that the associated representation $\rho_f: G_F \rightarrow GL_2(O)$ ($O$ is a suitable finite integral extension of ${\mathbb{Z}}_p$) has the following property: $\rho_f(I_{\mathfrak p})$ has order at least three for each prime ${\mathfrak p}$ of $F$ lying over $p$. Here $I_{\mathfrak p} \subset G_F$ is the inertia group at ${\mathfrak p}$. 
\end{itemize}
It follows from (P1) and (P2) that $\rho_f \cong {\rm Ind}_K^F(\varphi)$ is induced by a finite order character $\varphi: G_K \rightarrow O^\times$ which is a lift of $\bar{\varphi}$ (Lemma \ref{rmk:221}). The two conditions further imply that for any prime ideal ${\mathfrak p}$ of $F$ lying over $p$, ${\mathfrak p}$ splits in $K$ and the character $\varphi$ is ramified at exactly one of the two prime ideals ${\mathcal P}$ and ${\mathcal P}^\sigma$ of $K$ lying over ${\mathfrak p}$ (Lemma \ref{lem:202}). We assume that $\varphi$ is ramified at ${\mathcal P}$ and unramified at ${\mathcal P}^\sigma$. We let ${\mathcal Q}=\prod_{{\mathfrak p} \mid p}{\mathcal P}$ and ${\rm Cl}_K({\mathfrak n}_0{\mathcal Q}^\infty)=\varprojlim_{r \geq 1} {\rm Cl}_K({\mathfrak n}_0{\mathcal Q}^r)$ the projective limit of narrow ray class groups ${\rm Cl}_K({\mathfrak n}_0{\mathcal Q}^r)$ of $K$ of modulus ${\mathfrak n}_0{\mathcal Q}^r$. The group ${\rm Cl}_K({\mathfrak n}_0{\mathcal Q}^\infty)$ is of finite order if and only if $(\prod_{{\mathfrak p} \mid p} U_{K, {\mathcal P}})/\overbar{U_K}$ is a finite group (Section \ref{sec:202}). If this is the case, we put 
\begin{align*}
 M' &= |{\rm Cl}_K| \cdot \left|\left(\prod_{{\mathfrak p} \mid p} U_{K, {\mathcal P}} \right)/\overbar{U_K} \right| \cdot \prod_{\substack{{\mathfrak l} \mid {\mathfrak n}_0, \\ \text{split in $K$}}} (q_{\mathfrak l}-1) \cdot \prod_{\substack{{\mathfrak l} \mid {\mathfrak n}_0, \\ \text{inert in $K$}}} (q_{\mathfrak l}+1)  
\end{align*}
and  $M({\mathcal F}, K, f)=p^{{\rm ord}_p(M')}$, where ${\rm ord}_p$ is the $p$-adic valuation normalized such that ${\rm ord}_p(p)=1$. We will prove the following
\begin{theorem}
\label{theo:001}
Let ${\mathcal F}$ be a primitive $p$-ordinary cuspidal Hida family satisfying the conditions {\rm (P1)} and {\rm (P2)}. Then the following two statements hold true:  
\begin{enumerate}
\item If ${\rm Cl}_K({\mathfrak n}_0{\mathcal Q}^\infty)$ is an infinite group, then $K/F$ is a totally imaginary extension and there is a Hida family $\mathcal G$ of CM type by $K$ that has the same tame level and the same residual representation as those of ${\mathcal F}$. 
\item If ${\rm Cl}_K({\mathfrak n}_0{\mathcal Q}^\infty)$ is of finite order, then ${\mathcal F}$ is not of CM type and the number of classical weight one specializations of ${\mathcal F}$ is bounded by $M({\mathcal F}, K, f)$. 
\end{enumerate}
\end{theorem}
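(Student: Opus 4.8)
The plan is to parametrize the classical weight one specializations of $\mathcal{F}$ by finite order Hecke characters of $K$ lifting $\bar{\varphi}$, and then to count those characters by class field theory. First I would argue that every weight one specialization is induced from the same $K$: since $\bar{\rho}_{\mathcal{F}}=\bar{\rho}_{\mathcal{F}}^{\rm ss}$ is irreducible of dihedral type and unique (Section \ref{sec:113}), any classical weight one specialization $g$ of $\mathcal{F}$ is residually dihedral of the same type, so by the arguments of Lemma \ref{rmk:221} and Lemma \ref{lem:202} its attached Artin representation satisfies $\rho_g\cong \mathrm{Ind}_K^F(\psi_g)$ for a finite order character $\psi_g\colon G_K\to\overline{\mathbb{Q}}_p^{\times}$ lifting $\bar{\varphi}$, ramified at $\mathcal{P}$ and unramified at $\mathcal{P}^\sigma$ for each $\mathfrak{p}\mid p$. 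By strong multiplicity one for Hilbert modular forms the assignment $g\mapsto\rho_g$ is injective on the set of specializations, and since $\mathrm{Ind}_K^F(\psi)\cong\mathrm{Ind}_K^F(\psi^\sigma)$ the map $g\mapsto\{\psi_g,\psi_g^\sigma\}$ is injective; hence it suffices to bound the number of admissible characters $\psi$.

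Next I would pin down the conductor of such a $\psi$ and thereby land it in a ray class group. Because every specialization $g$ shares the tame level $\mathfrak{n}_0$ of $\mathcal{F}$ and, lying on a single primitive branch, shares the same nebentype, the character $\chi:=\psi_g\varphi^{-1}$ lifts the trivial character; it therefore takes values in $1+\mathfrak{m}$ and, being of finite order, is of $p$-power order. Its conductor away from $p$ is bounded in terms of $\mathfrak{n}_0$, and among the primes over $p$ it is ramified only at the $\mathcal{P}$'s, so $\chi$ factors through $\mathrm{Cl}_K(\mathfrak{n}_0\mathcal{Q}^\infty)$. Comparing determinants, $\det\rho_g=\epsilon_{K/F}\cdot(\psi_g\circ N_{K/F})$ is fixed across weight one points, which forces $\psi_g\psi_g^\sigma=\varphi\varphi^\sigma$, i.e.\ $\chi^{1+\sigma}=1$ is anticyclotomic. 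Thus the admissible $\psi$ form a torsor under the group of $p$-power order anticyclotomic characters of $\mathrm{Cl}_K(\mathfrak{n}_0\mathcal{Q}^\infty)$.

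This yields statement (2). When $\mathrm{Cl}_K(\mathfrak{n}_0\mathcal{Q}^\infty)$ is finite, the number of such characters equals the order of the $p$-Sylow subgroup of the relevant anticyclotomic quotient, namely $p^{{\rm ord}_p|A|}$ where $A$ denotes that quotient. The standard ray class number formula identifies $|A|$ with $M'$: the factors $|{\rm Cl}_K|$ and $\big|(\prod_{\mathfrak{p}\mid p}U_{K,\mathcal{P}})/\overline{U_K}\big|$ supply the global and $\mathcal{Q}^\infty$ contributions, while $q_{\mathfrak{l}}-1$ and $q_{\mathfrak{l}}+1$ are exactly the orders of the local norm-one (anticyclotomic) character groups at split and inert $\mathfrak{l}\mid\mathfrak{n}_0$. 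Hence the number of weight one specializations is at most $p^{{\rm ord}_p(M')}=M(\mathcal{F},K,f)$. Finally, $\mathcal{F}$ cannot be of CM type: a CM family admits infinitely many classical weight one specializations (by Ghate--Vatsal and \cite{BGV}), contradicting this finite bound.

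For statement (1), if $A$ is infinite then by the criterion of Section \ref{sec:202} the quotient $(\prod_{\mathfrak{p}\mid p}U_{K,\mathcal{P}})/\overline{U_K}$ is infinite, giving a $\mathbb{Z}_p$-extension of $K$ unramified outside $\mathcal{Q}$. A $\mathbb{Z}_p$-rank count forces $K/F$ totally imaginary: the target $\prod_{\mathfrak{p}\mid p}U_{K,\mathcal{P}}$ has $\mathbb{Z}_p$-rank $[F:\mathbb{Q}]$, whereas the image of $\overline{U_K}$ has strictly smaller rank precisely when $K$ is totally imaginary. Interpolating $\varphi$ along the resulting anticyclotomic $\mathbb{Z}_p$-extension produces a $\Lambda$-adic Hecke character $\Psi$ lifting $\bar{\varphi}$, and $\mathcal{G}:=\mathrm{Ind}_K^F(\Psi)$ is a primitive $p$-ordinary CM Hida family of tame level $\mathfrak{n}_0$ with residual representation $\bar{\rho}_{\mathcal{F}}$. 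I expect the main obstacle to lie here, in part (1): constructing $\Psi$ with exactly the right conductor and ordinarity data so that $\mathrm{Ind}_K^F(\Psi)$ verifies all axioms of a primitive $p$-ordinary Hilbert modular Hida family of tame level \emph{precisely} $\mathfrak{n}_0$ and residual representation \emph{exactly} $\bar{\rho}_{\mathcal{F}}$, together with the delicate rank (Leopoldt-type) input needed to conclude that $K$ is totally imaginary. For part (2) the subtler point is justifying the precise local conductor bounds that produce the $q_{\mathfrak{l}}\mp 1$ factors and the anticyclotomic constraint.
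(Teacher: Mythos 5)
Your strategy for part (2) — reduce to counting the ratio characters $\chi=\psi_g\varphi^{-1}$ by class field theory — is the paper's strategy (Lemma \ref{lem:293} and Lemma \ref{lem:tame-level}), and your local factors at the primes $\mathfrak{l}\mid\mathfrak{n}_0$ are the right ones. But your key constraint is wrong: the determinant (equivalently the nebentype) of a weight one specialization $\varphi_{1,\varepsilon}(\mathcal{F})$ is $\psi_{\mathcal F}\varepsilon$ (Section \ref{sec:103}), which \emph{varies} with the specialization through the $p$-power order character $\varepsilon$ of $\mathbf{G}$; so $\det\rho_g$ is not ``fixed across weight one points,'' the specializations do not ``share the same nebentype,'' and one cannot conclude $\chi^{1+\sigma}=1$. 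All one gets is $\chi\circ{\rm ver}_{K/F}=\varepsilon/\varepsilon'$, i.e.\ a character unramified outside $p$ and totally even — this is exactly Lemma \ref{lem:293}(3). The difference is not cosmetic: since $\chi$ is unramified at each $\mathcal{P}_i^\sigma$, the relation $\chi\chi^\sigma=1$ would force $\chi|_{I_{\mathcal{P}_i}}=1$ for every $i$ (because $\chi^\sigma$ is unramified at $\mathcal{P}_i$), i.e.\ $\chi$ would be unramified at \emph{all} primes above $p$, and then the factor $\left|\left(\prod_{i} U_{K,\mathcal{P}_i}\right)/\overline{U_K}\right|$ in $M'$ would never be needed — contradicting your own identification $|A|=M'$. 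Under the correct, weaker constraint, $\chi$ is a character of ${\rm Cl}_K(\mathfrak{n}_0\mathcal{Q}^\infty)$ whose restriction to the local units factors through the group (\ref{eqn:207}), and the exact sequence (\ref{exact}) then yields precisely the bound $p^{{\rm ord}_p(M')}=M(\mathcal{F},K,f)$. Your conclusion that $\mathcal{F}$ is non-CM via Theorem \ref{theo:107} is fine.

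The more serious gap is in part (1). The assertion that the image of $\overline{U_K}$ ``has strictly smaller rank precisely when $K$ is totally imaginary'' is not a rank count: the rank count only gives one direction (totally imaginary $\Rightarrow$ the quotient is infinite, which is Remark \ref{rmk:291}(1)). If $K$ is not totally imaginary, the free rank of $U_K$ is at least $d$, but whether its closure in $\prod_i U_{K,\mathcal{P}_i}$ attains the full $\mathbb{Z}_p$-rank $d$ is a semi-local Leopoldt-type statement that is open for general $F$; you acknowledge this but leave it unresolved, and the entire point of the paper (see the introduction) is to get around assuming Leopoldt. The paper avoids it by a completely different mechanism: from infinitude of ${\rm Cl}_K(\mathcal{Q}^\infty)$ one gets infinitely many admissible characters $\xi$ (Lemma \ref{lem:208}); each $\varphi\xi$ is automorphically induced to a classical weight one form $f_\xi$ with $\rho_{f_\xi}\cong{\rm Ind}_K^F(\varphi\xi)$ (Lemmas \ref{lem:292} and \ref{lem:wt-one}); its $p$-stabilizations lie in a primitive Hida family of tame level $\mathfrak{n}_0$ and residual representation $\bar{\rho}_{\mathcal F}$ (Proposition \ref{prop:211}), hence in the \emph{finite} Hida community of $\mathcal{F}$; pigeonhole produces a member $\mathcal{G}$ with infinitely many classical weight one specializations, and Theorem \ref{theo:107} then forces $K/F$ to be totally imaginary and $\mathcal{G}$ to be CM by $K$. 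Note finally that your direct construction $\mathcal{G}={\rm Ind}_K^F(\Psi)$ presupposes that $K$ is totally imaginary (a CM family requires a totally imaginary $K$ by Definition \ref{defn:105}), so the logical order in your plan — first conclude totally imaginary by a rank count, then build $\mathcal{G}$ — cannot be repaired from within; the conclusion ``totally imaginary'' must come out of the modularity/finiteness argument, not feed into it.
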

If we assume Leopoldt conjecture for $F$ and $p$ and $K/F$ is not totally imaginary, then the group ${\rm Cl}_K({\mathfrak n}_0{\mathcal Q}^\infty)$ is of finite order and the second case (2) always happens. In particular in the case of $F={\mathbb{Q}}$ and $K$ is a real quadratic field, the case (2) always happens. It should be noticed that if $K/F$ is totally imaginary, then ${\rm Cl}_K({\mathfrak n}_0{\mathcal Q}^\infty)$ is always an infinite group (Remark \ref{rmk:291} (2)). 

We show that if ${\rm Cl}_K({\mathfrak n}_0{\mathcal Q}^\infty)$ is infinite, then there is a Hida family $\mathcal G$ having the same tame level and the same residual representation as those of ${\mathcal F}$, that admits infinitely many classical weight one specializations (see Sections \ref{sec:203} and \ref{sec:204} for details). In view of Balasubramanyam, Ghate and Vatsal's result which asserts that a non-CM family admits only finitely many classical weight one specializations (Theorem 3 of \cite{BGV}), we see that $K/F$ is totally imaginary and ${\mathcal G}$ is of CM type (by $K$).  

The first case (1) of Theorem \ref{theo:001} is referred to as ``residually of CM type" in \cite{DG}. In this case if ${\mathcal F}$ is a non-CM primitive $p$-ordinary Hida family satisfying (P1) and the quadratic extension $K/F$ in (P1) is totally imaginary, one can also give a bound on the number of classical weight one specializations in ${\mathcal F}$, which is a generalization of Lemma 6.5 in \cite{DG}. Since ${\mathcal F}$ is not of CM type, there is at least one prime ideal ${\mathfrak l}$ in $F$ that is inert in $K$, prime to ${\mathfrak n}_0p$ and the trace of $\rho_{\mathcal F}({\rm Frob}_{\mathfrak l})$ is non-zero (otherwise ${\mathcal F}$ has CM by $K$). Here ${\rm Frob}_{\mathfrak l}$ is a geometric Frobenius at ${\mathfrak l}$. Let $\lambda_{{\mathcal F}, {\mathfrak l}}$ be the number of height one prime ideals of $\Lambda_L$ which contain ${\rm Tr}\rho_{\mathcal F}({\rm Frob}_{\mathfrak l})$ and sit above prime ideals of $\Lambda$ corresponding to weight one specializations. We put $\lambda_{\mathcal F}={\rm min}\left\{\lambda_{{\mathcal F}, {\mathfrak l}} \mid \text{${\mathfrak l}$ is inert in $K$ and prime to ${\mathfrak n}_0p$} \right\}$. Then we will prove the following
\begin{proposition}
\label{prop:002}
Let ${\mathcal F}$ be a non-CM primitive $p$-ordinary Hida family satisfying {\rm (P1)}. Suppose that the quadratic extension $K/F$ in {\rm (P1)} is totally imaginary. Then the number of classical weight one specializations of ${\mathcal F}$ is bounded by $\lambda_{\mathcal F}$. 
\end{proposition}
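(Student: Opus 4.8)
The plan is to prove that every classical weight one specialization of ${\mathcal F}$ is induced from $K$, and then to convert the resulting vanishing of Hecke eigenvalues at inert primes into the stated bound. First I would fix the dictionary between specializations and primes: a classical weight one specialization $f$ of ${\mathcal F}$ is cut out by an arithmetic height one prime $\mathfrak P_f$ of $\Lambda_L$ lying above a weight one prime of $\Lambda$, and reduction of $\rho_{\mathcal F}$ modulo $\mathfrak P_f$ recovers $\rho_f$. In particular, for every prime ${\mathfrak l}$ of $F$ prime to ${\mathfrak n}_0 p$ one has ${\rm Tr}\,\rho_f({\rm Frob}_{\mathfrak l}) \equiv {\rm Tr}\,\rho_{\mathcal F}({\rm Frob}_{\mathfrak l}) \pmod{\mathfrak P_f}$, and the assignment $f \mapsto \mathfrak P_f$ is injective because distinct eigenforms have distinct systems of Hecke eigenvalues. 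Granting the claim that each $\rho_f$ is induced from $K$, the trace ${\rm Tr}\,\rho_f({\rm Frob}_{\mathfrak l})$ vanishes for every ${\mathfrak l}$ inert in $K$; hence ${\rm Tr}\,\rho_{\mathcal F}({\rm Frob}_{\mathfrak l}) \in \mathfrak P_f$ for every weight one specialization $f$ and every such ${\mathfrak l}$. Consequently the weight one specializations inject into the set of height one primes counted by $\lambda_{{\mathcal F},{\mathfrak l}}$ for \emph{every} admissible ${\mathfrak l}$, so their number is at most each $\lambda_{{\mathcal F},{\mathfrak l}}$ and therefore at most the minimum $\lambda_{\mathcal F}$.

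It remains to justify the claim that every weight one specialization is induced from $K$. By the comparison of projective images in Section~\ref{sec:113}, the image of $\rho_f$ in $PGL_2$ is dihedral, so $\rho_f \cong {\rm Ind}_{K'}^F(\varphi')$ for some quadratic extension $K'/F$ and some finite order character $\varphi'$; since $\rho_f$ is totally odd, the quadratic character cutting out $K'$ must be odd at every archimedean place, so $K'$ is totally imaginary. To identify $K'$ with $K$ I would argue through self-twists. Writing $\chi_{K/F}$ and $\chi_{K'/F}$ for the quadratic characters of $G_F$ attached to $K$ and $K'$, the induced shape gives $\rho_f \otimes \chi_{K'/F} \cong \rho_f$, and reducing modulo the maximal ideal shows that $\chi_{K'/F}$ is a nontrivial quadratic self-twist of $\bar\rho_{\mathcal F} \cong {\rm Ind}_K^F(\bar\varphi)$. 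Because $p$ is odd, a quadratic character is determined by its reduction, so it suffices to know the self-twists of $\bar\rho_{\mathcal F}$. When the image of $\bar\rho_{\mathcal F}$ in $PGL_2({\mathbb{F}})$ is dihedral of order at least six, its only nontrivial quadratic self-twist is $\chi_{K/F}$, whence $\chi_{K'/F}=\chi_{K/F}$ and $K'=K$.

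The main obstacle is the residual Klein four case, in which $\bar\rho_{\mathcal F}$ is simultaneously induced from each of the three quadratic subfields of a biquadratic extension $\widetilde K/F$ and therefore carries three nontrivial quadratic self-twists. Here the self-twist argument only forces $K'$ to be one of these three subfields, and a priori $K'$ could be a totally imaginary subfield different from $K$; for such an $f$ the eigenvalue at a prime inert in $K$ but split in $K'$ need not vanish, which would genuinely break the bound. To exclude this I would use the $p$-ordinary structure together with the totally imaginary hypothesis: the unramified quotient of $\bar\rho_{\mathcal F}|_{I_{\mathfrak p}}$ at each ${\mathfrak p}\mid p$ is prescribed by the inducing datum from $K$, and the nearly ordinary filtration of the weight one representation $\rho_f$ must reduce to it, which I expect to force the inducing field of $\rho_f$ to coincide with $K$ at every ${\mathfrak p}\mid p$ and hence globally. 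Equivalently, a specialization induced from a second totally imaginary subfield would place $f$ on a Hida family of CM type by that subfield sharing the tame level, residual representation and ordinary filtration of ${\mathcal F}$, and reconciling this with the non-CM hypothesis on ${\mathcal F}$ pins the inducing field down to $K$. I expect this compatibility at the primes above $p$, rather than any part of the counting argument, to be the delicate step; once it is established the identification $K'=K$ is complete and the bound follows from the first paragraph.
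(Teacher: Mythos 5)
Your first two paragraphs reproduce what is in fact the intended proof: the paper gives no argument for Proposition \ref{prop:294} beyond the sentence preceding it and the citation of Lemma 6.5 of \cite{DG}, and that argument is exactly your combination of (i) the counting of height one primes over weight one points containing ${\rm Tr}\rho_{\mathcal F}({\rm Frob}_{\mathfrak l})$, and (ii) the identification of the inducing field of each weight one specialization with $K$ via uniqueness of the quadratic self-twist. Step (ii) is correct precisely when the projective image of $\bar{\rho}_{\mathcal F}$ is $D_{2m}$ with $m \geq 3$. (One incidental error in your second paragraph: total oddness of ${\rm Ind}_{K'}^F(\varphi')$ does \emph{not} force $K'$ to be totally imaginary; at a real place of $K'$ oddness can be produced by a character $\varphi'$ of mixed signature at the two places above it, which is exactly how dihedral weight one forms with real multiplication arise. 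This does not damage your main line, but it means that in the problematic case below the candidate fields $K'$ need not even be totally imaginary.)

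The genuine gap is the Klein four case, which you flag honestly but do not close, and neither of your two proposed repairs can work as stated. First, the ordinarity idea: when the projective image of $\bar{\rho}_{\mathcal F}$ is $D_4$, the three inductions ${\rm Ind}_{K_i}^F(\bar{\varphi}_i)$ from the three quadratic subfields $K_1=K, K_2, K_3$ of the biquadratic extension are literally the \emph{same} residual representation, so its ordinary filtration at the primes above $p$ cannot possibly distinguish an ``inducing datum from $K$'' from one coming from $K_2$ or $K_3$; moreover Proposition \ref{prop:294} does not assume (P2), so one has no control over how $p$ splits in $K$ and no analogue of Lemma \ref{lem:202} to exploit. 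Second, the reconciliation with the non-CM hypothesis fails because of Remark \ref{rmk:106}: Hida families through a \emph{weight one} form are not unique, so a weight one specialization of ${\mathcal F}$ induced from $K_2$ may perfectly well lie simultaneously on a CM family attached to $K_2$ and on the non-CM family ${\mathcal F}$ --- indeed, a non-CM family possessing CM weight one specializations is precisely the ``residually CM but not CM'' phenomenon this proposition is about, so no contradiction with the non-CM hypothesis can be extracted. Thus your proposal is incomplete at exactly this point: a specialization induced only from $K_2$ (say, with projective image $D_{2n}$, $n=2p^k\geq 6$) has nonvanishing trace at the primes inert in $K$ but split in $K_2$, and escapes the count by $\lambda_{{\mathcal F},{\mathfrak l}}$. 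To be fair, the paper inherits the same issue: both here and in the outline of Theorem \ref{theo:205} the identity $\rho_{f'}={\rm Ind}_K^F(\varphi')$ for an arbitrary weight one specialization $f'$ is simply asserted, and the self-twist argument of \cite{DG} that underlies it requires the projective image of $\bar{\rho}_{\mathcal F}$ to have order at least six; so you have isolated a point that the paper leaves unaddressed, but you have not supplied the missing argument either.
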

Next we consider the exceptional case. 

\subsection*{Exceptional case}
\label{sec:ex}
As the image of $\rho_f$ in $PGL_2({\mathbb{C}})$ for any classical weight one forms $f$ in ${\mathcal F}$ has bounded order, our analysis is simpler than the dihedral case. 
Let $p^r$ be the $p$-part of the class number of $F$ and $t$ the number of the prime ideals of $F$ lying over $p$. 
\begin{theorem} 
\label{theo:003}
Let ${\mathcal F}$ be a non-CM primitive $p$-ordinary cuspidal Hida family such that the image of the residual representation $\bar{\rho}_{\mathcal F}: G_F \rightarrow GL_2({\mathbb{F}})$ in $PGL_2({\mathbb{F}})$ is exceptional. Then ${\mathcal F}$ has at most $a \cdot b$ classical weight one specializations, where
\begin{itemize}
\item $a=1$, except $p=5$ and the type of ${\mathcal F}$ is $A_5$, in which case $a=2;$ and
\item $b=p^r$, except $p=3$ or $5$, in which case $b=2^t \cdot p^r$. 
\end{itemize}
\end{theorem}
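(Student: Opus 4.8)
The plan is to organize the classical weight one specializations according to their associated projective Galois representations, bounding separately the number of distinct projective representations (this yields the factor $a$) and, for each fixed projective representation, the number of specializations realizing it (this yields the factor $b$). By the discussion preceding the statement, any classical weight one specialization $f$ gives an odd irreducible Artin representation $\rho_f \colon G_F \to GL_2(\overline{\mathbb Q}_p)$ whose reduction is the fixed residual representation $\bar{\rho}_{\mathcal F}$; write $\mathbb P\rho_f \colon G_F \to PGL_2(\overline{\mathbb Q}_p)$ for its projectivization, with image the exceptional group $G \in \{A_4, S_4, A_5\}$. Since $p$ is odd and none of $A_4, S_4, A_5$ has a nontrivial normal $p$-subgroup, the reduction map $PGL_2(O) \to PGL_2({\mathbb F})$, whose kernel is pro-$p$, is injective on $G$; hence the image of $\mathbb P\bar{\rho}_{\mathcal F}$ is isomorphic to $G$ and the exceptional type is preserved under reduction, as already recorded in Section \ref{sec:113}.

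First I would bound the number of projective representations $\mathbb P\rho_f$ that can occur. All of them lift the single residual projective representation $\mathbb P\bar{\rho}_{\mathcal F}$, and by the injectivity just noted their common image is the rigid finite subgroup $G \subset PGL_2(\overline{\mathbb Q}_p)$. For $p \geq 7$ the order of $G$ is prime to $p$, the mod $p$ reduction identifies $G$ with its image, and the conjugacy class of such a lift is unique; this gives $a = 1$. The single exception is $p = 5$ with $G = A_5$: here the exceptional isomorphism $A_5 \cong PSL_2({\mathbb F}_5)$ allows the two mutually ${\rm Gal}(\mathbb Q(\sqrt 5)/\mathbb Q)$-conjugate icosahedral projective representations to have the same mod $5$ reduction, so two projective lifts are possible and $a = 2$.

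Next I would fix a projective representation and bound the specializations realizing it. If $f$ and $f'$ share the same $\mathbb P\rho_f$, then $\rho_{f'} \cong \rho_f \otimes \chi$ for a character $\chi \colon G_F \to \overline{\mathbb Q}_p^\times$; comparing reductions and using that $\bar{\rho}_{\mathcal F}$ is irreducible and non-dihedral (so admits no nontrivial self-twist) forces $\bar{\chi} = 1$, whence $\chi$ has $p$-power order. As $p$ is odd, evaluation at complex conjugations shows $\chi$ is totally even, and the fixed tame level ${\mathfrak n}_0$ together with the $p$-power order of $\chi$ confines its ramification to the primes above $p$. For $p \geq 7$ the weight one ordinarity together with the fixed nebentype of $\mathcal F$ at the primes ${\mathfrak p} \mid p$ pins down the local inertia behaviour rigidly and forces $\chi$ to be unramified at $p$ as well; thus $\chi$ factors through the class group and lies in ${\rm Hom}({\rm Cl}_F, \mu_{p^\infty})$, a group of order $p^r$, giving $b = p^r$. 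For $p = 3$ or $5$ the local ordinary deformation at each ${\mathfrak p} \mid p$ is no longer rigid --- two weight one lifts with the same residual and projective data can differ by a choice of unramified quotient, the companion-form ambiguity --- contributing an extra factor of $2$ at each of the $t$ primes above $p$, so $b = 2^t p^r$.

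Multiplying the two bounds gives at most $a \cdot b$ specializations. I expect the main obstacle to be the small-characteristic analysis for $p = 3$ and $p = 5$: there the exceptional isomorphisms $A_4 \cong PSL_2({\mathbb F}_3)$, $S_4 \cong PGL_2({\mathbb F}_3)$ and $A_5 \cong PSL_2({\mathbb F}_5)$ interfere with the reduction of the exceptional image, and the local deformation theory at primes above $p$ ceases to be rigid. Disentangling exactly which lifts survive, so as to extract the precise correction factors $a$ and $2^t$ rather than a cruder bound, requires a careful study of the $p$-modular representation theory of the binary exceptional groups together with the local Galois theory of ordinary weight one forms, and this is where the real work lies.
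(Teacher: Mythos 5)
Your overall strategy (bound the projective data, then bound the twists realizing fixed projective data) is the same decomposition the paper uses, and your treatment of $p\geq 7$ is essentially right, but the proof is genuinely incomplete exactly where the theorem has content at small primes --- and you concede this yourself (``this is where the real work lies''). The first concrete gap is the factor $a$ at $p=3$: your argument for $a=1$ rests on $p\nmid |G|$ plus uniqueness of prime-to-$p$ lifts, and $3$ divides $|A_4|$, $|S_4|$ and $|A_5|$, so it gives nothing at $p=3$, yet the theorem asserts $a=1$ there. The paper avoids counting projective \emph{representations} altogether and instead counts projective \emph{trace functions} $({\rm Tr}\,\rho_f)^2/\det\rho_f$. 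Since reduction is injective on the exceptional projective image (no normal $p$-subgroups for $p$ odd), the order of the image of $\rho_f(g)$ in $PGL_2(O)$ equals the order of $\bar{\rho}_{\mathcal F}(g)$ in $PGL_2({\mathbb F})$, and an element of order $n\leq 5$ in $PGL_2$ has projective trace $4,0,1,2$, or a root of $X^2-3X+1$, according to $n=1,\dots,5$. Thus the characteristic-zero projective trace is determined by $\bar{\rho}_{\mathcal F}$ for \emph{every} odd $p$, the only ambiguity being which root of $X^2-3X+1$ occurs at order-$5$ elements, and the two roots are congruent mod $p$ exactly when $p=5$. One then recovers ${\rm Tr}\,\rho_f$ from the projective trace and $\det\rho_f$ using ${\rm Tr}\,\rho_f\equiv{\rm Tr}\,\bar{\rho}_{\mathcal F}$ and $p$ odd; this uniform argument is what settles $p=3$.

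The second gap is the factor $b$ at $p\in\{3,5\}$: your ``companion-form ambiguity contributing a factor $2$ at each prime above $p$'' is a heuristic, not a proof. The paper's argument: writing $f=\varphi_{1,\varepsilon}({\mathcal F})$, $f'=\varphi_{1,\varepsilon'}({\mathcal F})$ and $\rho_f\cong\eta\otimes\rho_{f'}$, the determinant relation gives $\varepsilon=\eta^2\varepsilon'$, so $\eta^2$ --- hence $\eta$, being of odd order --- is unramified outside $p$; restricting $\rho_f\cong\eta\otimes\rho_{f'}$ to $I_{\mathfrak p}$ forces either $\eta|_{I_{\mathfrak p}}=1$ or $\eta|_{I_{\mathfrak p}}=\varepsilon|_{I_{\mathfrak p}}$ (the latter only when $\psi_{\mathcal F}|_{I_{\mathfrak p}}=1$), so $\eta$ is pinned down by a subset $S$ of the $t$ primes above $p$ together with an everywhere-unramified character, giving at most $2^t\cdot p^r$ choices. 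Note also that even in your $p\geq 7$ case two steps are under-justified: (i) the fixed tame level does \emph{not} by itself confine the ramification of the twist $\chi$ to primes above $p$ (a ramified $p$-power order twist at $q\mid{\mathfrak n}_0$ can preserve the prime-to-$p$ conductor); the correct reason is again the determinant comparison, since $\chi^2=\varepsilon'/\varepsilon$ is a ratio of characters of ${\mathbf G}$ and hence unramified outside $p$; and (ii) ``ordinarity pins down the local behaviour'' should be made precise: ordinarity gives $\rho_f|_{I_{\mathfrak p}}\cong\left(\begin{smallmatrix}\det\rho_f & * \\ 0 & 1\end{smallmatrix}\right)$, so $\rho_f(I_{\mathfrak p})$ injects into the exceptional projective image, whence $\psi_{\mathcal F}\varepsilon|_{I_{\mathfrak p}}$ has order at most $5$; as $\psi_{\mathcal F}$ is tame and $\varepsilon$ has $p$-power order, this forces $\varepsilon|_{I_{\mathfrak p}}=1$ once $p\geq 7$.
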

Further, under some assumptions on a Hida community $\left\{{\mathcal F} \right\}$, we prove the existence of a classical weight one specialization in some member of $\left\{{\mathcal F} \right\}$. 
\begin{proposition}
\label{prop:004}
Let $p \geq 7$ be a prime number that splits completely in $F$ and $\left\{{\mathcal F} \right\}$ a Hida community of exceptional type which is $p$-distinguished and the residual representation $\bar{\rho}_{\mathcal F}$ is absolutely irreducible when restricted to ${\rm Gal}(\overbar{F}/F(\zeta_p))$. Assume further that the tame level ${\mathfrak n}_0$ of $\left\{{\mathcal F} \right\}$ is the same as the Artin conductor of $\bar{\rho}_{\mathcal F}$. Then $\left\{{\mathcal F} \right\}$ has at least one classical weight one specialization $f$. Moreover, any other classical weight one specialization of $\left\{{\mathcal F} \right\}$ can be written as $f \otimes \eta$, where $\eta: G_F \rightarrow {\mathbb{Q}}_p^\times$ is a $p$-power order character of conductor dividing ${\mathfrak n}_0$. 
\end{proposition}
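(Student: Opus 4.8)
The plan is to realize the desired weight one form as the classical form attached to an Artin lift of $\bar{\rho}_{\mathcal F}$, the construction being made possible by the hypothesis $p \geq 7$. Indeed, $p \geq 7$ is coprime to the orders $|A_4|=12$, $|S_4|=24$ and $|A_5|=60$, so the exceptional projective image $\bar G = {\rm Im}(\bar{\rho}_{\mathcal F}^{\rm proj}) \subset PGL_2({\mathbb F})$ has order prime to $p$. Since the kernel of the reduction map $PGL_2({\mathcal O}) \to PGL_2({\mathbb F})$ is pro-$p$, a profinite Schur--Zassenhaus argument lifts $\bar{\rho}_{\mathcal F}^{\rm proj}$ to a continuous homomorphism $\tilde\rho: G_F \to PGL_2({\mathcal O})$ with image again $\bar G$; moreover this lift is unique up to conjugacy, a rigidity that I reuse for the final assertion.

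I would then linearize $\tilde\rho$. The obstruction to lifting it to a genuine two-dimensional representation lies in the Brauer group of $F$, and by global class field theory it can be annihilated after twisting by a finite order character, yielding an Artin representation $\rho_f: G_F \to GL_2(\overbar{\mathbb Q}_p)$ of exceptional type that lifts $\bar{\rho}_{\mathcal F}$. Using that $p$ splits completely in $F$ and that $\bar{\rho}_{\mathcal F}$ is $p$-distinguished and $p$-ordinary, I arrange $\rho_f$ to be ordinary at every ${\mathfrak p} \mid p$, and using that ${\mathfrak n}_0$ equals the Artin conductor of $\bar{\rho}_{\mathcal F}$, to have conductor ${\mathfrak n}_0$ away from $p$. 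Now $\bar{\rho}_{\mathcal F}$ is modular, and under the standing hypotheses --- absolute irreducibility of $\bar{\rho}_{\mathcal F}$ over ${\rm Gal}(\overbar{F}/F(\zeta_p))$ and $p$-distinguishedness --- the Taylor--Wiles method identifies the ordinary deformation ring $R$ of $\bar{\rho}_{\mathcal F}$ of tame conductor ${\mathfrak n}_0$ with the ordinary Hecke algebra ${\mathbb T}$ of $\left\{{\mathcal F}\right\}$. Thus $\rho_f$ defines a weight one point of ${\mathbb T}$, hence a $p$-adic ordinary weight one eigenform in the community; since $\rho_f$ has finite image, a classicality criterion of Buzzard--Taylor type for ordinary weight one Hilbert modular forms upgrades it to a classical weight one specialization $f$ of a member of $\left\{{\mathcal F}\right\}$, whose attached Artin representation is $\rho_f$ in the sense of \cite{O84} and \cite{RT}.

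For the final assertion I would proceed by a twisting argument. Let $g$ be any classical weight one specialization of $\left\{{\mathcal F}\right\}$ and $\rho_g$ its attached Artin representation, which reduces to $\bar{\rho}_{\mathcal F}$. Both $\rho_f$ and $\rho_g$ lift the projective representation $\bar{\rho}_{\mathcal F}^{\rm proj}$, so by the uniqueness of the prime-to-$p$ projective lift established above their projectivizations agree; hence $\rho_g \cong \rho_f \otimes \eta$ for some character $\eta: G_F \to \overbar{\mathbb Q}_p^\times$. Reducing modulo the maximal ideal gives $\bar\eta = 1$, so $\eta$ takes values in $1 + {\mathfrak m}$ and therefore has finite $p$-power order; and since $\rho_f$ and $\rho_g$ both have conductor dividing ${\mathfrak n}_0$, the conductor of $\eta$ divides ${\mathfrak n}_0$, as claimed.

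The crux is the last step of the existence argument, namely the passage from the Galois side to a genuinely \emph{classical} weight one form. Weight one is non-cohomological, so the eigenform produced by $R = {\mathbb T}$ is a priori only $p$-adic, and its classicality has to be extracted from the finiteness of the image of $\rho_f$ by a classicality criterion for ordinary weight one Hilbert modular forms --- exactly the delicate phenomenon around which the whole paper revolves. A subsidiary difficulty is the simultaneous control of ordinarity at $p$ and of the prime-to-$p$ conductor when linearizing $\tilde\rho$, and it is precisely here that the hypotheses that $p$ splits completely, that $\bar{\rho}_{\mathcal F}$ is $p$-distinguished, and that ${\mathfrak n}_0$ equals the Artin conductor are used in an essential way.
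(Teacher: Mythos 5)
Your overall strategy coincides with the paper's: construct a prime-to-$p$ Artin lift of $\bar{\rho}_{\mathcal F}$ using $p\geq 7$, feed it into a modularity-lifting-plus-classicality theorem to produce a classical weight one form whose Hida family has tame level ${\mathfrak n}_0$, and deduce the twisting statement from rigidity of the prime-to-$p$ lift. What you describe as ``$R={\mathbb T}$ plus a Buzzard--Taylor classicality criterion'' is precisely the packaged theorem of Sasaki that the paper invokes (Theorem \ref{theo:209}, i.e.\ Theorem 3 of \cite{Sa10}); the hypotheses of the proposition ($p$ split in $F$, $p$-distinguished, absolute irreducibility over $F(\zeta_p)$) are tailored exactly to it, and your Schur--Zassenhaus conjugacy argument is a clean justification of the step the paper states tersely (that the projectivizations of $\rho_f$ and $\rho_g$ agree). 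The problem is in how you build the lift. By lifting only the projective representation and then linearizing via vanishing of the Brauer obstruction, you lose exactly the local data that Sasaki's theorem requires as hypotheses: that the lift restricted to each $D_{\mathfrak p}$ is a direct sum of two finite-order characters with \emph{distinct} reductions, and that its conductor is ${\mathfrak n}_0$. You assert these ``can be arranged'' by a twist, but a single character twist has to fix the reduction, the finiteness of image, the shape at every ${\mathfrak p}\mid p$, and the prime-to-$p$ conductor simultaneously, and you give no argument. The paper sidesteps all of this by Teichm\"uller-lifting the \emph{linear} representation: since the projective image has order $12$, $24$ or $60$ and scalars in ${\mathbb F}^\times$ have order prime to $p$, the full image of $\bar{\rho}_{\mathcal F}$ in $GL_2({\mathbb F})$ has order prime to $p$, so the section of $GL_2(W({\mathbb F}))\to GL_2({\mathbb F})$ over this image yields $\tilde{\rho}$ with the same kernel as $\bar{\rho}_{\mathcal F}$; its conductor is then automatically the Artin conductor of $\bar{\rho}_{\mathcal F}$ (which is ${\mathfrak n}_0$ by hypothesis), and $\tilde{\rho}|_{D_{\mathfrak p}}$ is automatically a sum of two characters lifting $\bar{\mathcal E}_{\mathfrak p}$ and $\bar{\mathcal D}_{\mathfrak p}$: semisimplicity of $\bar{\rho}_{\mathcal F}|_{D_{\mathfrak p}}$ is Maschke's theorem for a prime-to-$p$ group, and distinctness modulo $p$ is $p$-distinguishedness. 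You should replace your $PGL_2$ construction by this direct one.

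The second genuine gap is in the uniqueness part, where you write that ``$\rho_f$ and $\rho_g$ both have conductor dividing ${\mathfrak n}_0$.'' For $\rho_g$ this is not known a priori: a weight one specialization $g=\varphi_{1,\varepsilon'}({\mathcal G})$ of a member of the community has level ${\mathfrak n}_0p^{r+1}$, and $\rho_g$ could in principle be ramified at the primes above $p$ through $\varepsilon'$ or through the nebentype $\psi_{\mathcal G}$. Proving that $\eta$ is unramified at $p$ is the actual content of this step, and it is where $p\geq 7$ and ordinarity enter: ordinarity forces $\rho_g(I_{\mathfrak p})$ to inject into the projective image, whose elements have order at most $5<p$, so the $p$-power order character $\varepsilon'|_{I_{\mathfrak p}}$ is trivial (and likewise $\varepsilon|_{I_{\mathfrak p}}$, as in the proof of Theorem \ref{theo:301}); a case analysis of the relation $\rho_g\cong\eta\otimes\rho_f$ on $I_{\mathfrak p}$ then gives $\eta|_{I_{\mathfrak p}}=1$; and finally the determinant relation $\psi_{\mathcal G}\varepsilon'=\eta^2\psi_{\mathcal F}\varepsilon$, together with the fact that $\eta$ has odd ($p$-power) order so that ${\rm cond}(\eta)={\rm cond}(\eta^2)$, bounds the conductor of $\eta$ by ${\mathfrak n}_0$. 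Without this inertia analysis, your argument only yields that the conductor of $\eta$ divides ${\mathfrak n}_0p^\infty$, which is strictly weaker than the claim of the proposition.
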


As mentioned at the beginning of the introduction, Theorems \ref{theo:001} and \ref{theo:003} extend Theorems 6.4 and 5.1 of \cite{DG}, respectively, to the case of Hilbert modular forms. The proof is basically the same as \cite{DG}. When $F={\mathbb{Q}}$ and $K$ is a real quadratic field, the finiteness of ${\rm Cl}_K({\mathfrak n}_0{\mathcal Q}^\infty)$ is a consequence of Leopoldt conjecture. However, for a general totally real field $F$, Leopoldt conjecture is still open and we do not know whether or not the group ${\rm Cl}_K({\mathfrak n}_0{\mathcal Q}^\infty)$ is finite. Therefore, a new analysis around the group ${\rm Cl}_K({\mathfrak n}_0{\mathcal Q}^\infty)$ is necessary to get around assuming Leopoldt conjecture. 

This paper is organized as follows: In Section \ref{sec:100}, we recall basics of ordinary Hida families of Hilbert cusp forms and Galois representations attached to them. In the first two subsections of Section \ref{sec:110}, we discuss specializations (including in weight one) of CM and non-CM families and then state the finiteness result for non-CM families in \cite{BGV}. The rest of the section is a summary of projective images of the residual representations of Hida families and of Artin representations attached to classical weight one forms. In Section \ref{sec:200} we discuss the dihedral case and prove Theorem \ref{theo:001} (Theorem \ref{theo:205}) and Proposition \ref{prop:002} (Proposition \ref{prop:294}). In the last section, we deal with the exceptional case and provide proofs of Theorem \ref{theo:003} (Theorem \ref{theo:301}) and Proposition \ref{prop:004} (Proposition \ref{prop:303}).

\section{Preliminaries on ordinary Hida families}
\label{sec:100}
In this section, we briefly recall basic notions and properties concerning ordinary Hida families of Hilbert modular forms of parallel weight and the associated Galois representations. Details can be found in \cite{H88} and \cite{W88}. Let $F$ be a totally real field of degree $d$ over the field ${\mathbb{Q}}$ of rationals, $O_F$ the ring of integers of $F$, $G_F={\rm Gal}(\overbar{F}/F)$ the absolute Galois group of $F$ and $p$ an odd prime. Throughout the paper, we fix embeddings $\iota_p: \overbar{\mathbb{Q}} \hookrightarrow \overbar{\mathbb{Q}}_p, \ \iota: \overbar{\mathbb{Q}}_p \hookrightarrow {\mathbb{C}}$ and put $\iota_\infty=\iota \circ \iota_p: \overbar{\mathbb{Q}} \hookrightarrow {\mathbb{C}}$. 

\subsection{Weight and character}
\label{sec:101}
We set up the notation basically following \cite{W88}. Let $F_\infty$ be the cyclotomic ${\mathbb{Z}}_p$-extension of $F$ and ${\mathbf{G}}$ the Galois group ${\rm Gal}(F_\infty/F)$. This $\mathbf{G}$ will be the weight space of Hida families. Let ${\mathcal O}$ be the $p$-adic integer ring of a finite extension of ${\mathbb{Q}}_p$ and $\Lambda={\mathcal O}[[\mathbf{G}]]$ the complete group algebra. We fix a topological generator $\gamma$ of ${\mathbf{G}}$. Then $\Lambda$ can be identified with the power series ring ${\mathcal O}[[X]]$ of one-variable by sending $\gamma$ to $1+X$. 
Let $F(\mu_{p^\infty})$ be the Galois extension of $F$ obtained by adjoining all the $p$-power roots of unity to $F$. Then the group ${\rm Gal}(F(\mu_{p^\infty})/F)$ is isomorphic to ${\mathbb{Z}}_p^\times$ and we put
\begin{align*}
 \chi_p: G_F & \rightarrow {\rm Gal}(F(\mu_{p^\infty})/F) \cong {\mathbb{Z}}_p^\times
\end{align*}
the $p$-adic cyclotomic character of $G_F$. We have an isomorphism ${\rm Gal}(F(\mu_{p^\infty})/F) \cong {\mathbf{G}} \times \Delta$ corresponding to the decomposition ${\mathbb{Z}}_p^\times=(1+p{\mathbb{Z}}_p) \times \mu_{p-1}$, where $\mu_{p-1}$ is the set of $(p-1)$-st roots of unity in ${\mathbb{Z}}_p^\times$. Hence we may and do regard $\chi_p$ as a character of ${\mathbf{G}}$. Let
\begin{align*}
 \omega &= \lim_{n \to \infty} \chi_p^{p^n}: G_F \rightarrow {\mathbb{Z}}_p^\times \rightarrow \mu_{p-1}
\end{align*}
be the Teichm\"uller character. For an integer $k$ and a $p$-power order character $\varepsilon: {\mathbf{G}} \rightarrow \overbar{\mathbb{Q}}_p^\times$, let $\varphi_{k, \varepsilon}: \Lambda \rightarrow \overbar{\mathbb{Q}}_p$ be the ring homomorphism induced by $a \mapsto \varepsilon(a)\chi_p(a)^{k-1}$ on ${\mathbf{G}}$. Let $P_{k, \varepsilon}$ denote the kernel of $\varphi_{k, \varepsilon}$. Note that the character $a \mapsto \varepsilon(a)\chi_p(a)^{k-1}$ on ${\mathbf{G}}$ is of finite order if and only if $k=1$. 

Let $L$ be a finite extension of the field of fractions of $\Lambda$ and $\Lambda_L$ the closure of $\Lambda$ in $L$. 
\begin{definition} 
\label{defn:101}
A prime ideal $P$ of $\Lambda_L$ is called an arithmetic point if $P \cap \Lambda=P_{k, \varepsilon}$ for some integer $k \geq 2$ and a finite order character $\varepsilon: {\mathbf{G}} \rightarrow \overbar{\mathbb{Q}}_p^\times$. 
\end{definition}

Let $\chi_\Lambda: G_F \rightarrow \Lambda^\times$ be the $\Lambda$-adic cyclotomic character obtained by composing the canonical surjection $G_F \rightarrow {\mathbf{G}}$ with the map ${\mathbf{G}} \rightarrow \Lambda^\times$ taking $\gamma$ to  $1+X$. 

\subsection{Ordinary Hida families}
\label{sec:102}
Throughout this paper, we fix a non-zero integral ideal ${\mathfrak n}_0$ in $O_F$ prime to $p$ which will be the tame level of Hida families. Let $\psi: G_F \rightarrow \overbar{\mathbb{Q}}_p^\times$ be a totally odd finite order character of conductor dividing ${\mathfrak n}_0p$ that is tamely ramified at all prime ideals ${\mathfrak p}$ of $F$ lying over $p$. 

\begin{definition} 
\label{defn:102}
A $p$-ordinary cuspidal Hida family ${\mathcal F}$ of tame level ${\mathfrak n}_0$ and nebentype $\psi$ is a collection $\left\{c({\mathfrak m}, {\mathcal F}) \right\}_{\mathfrak m}$ of elements in $\Lambda_L$ indexed by the set of non-zero integral ideals ${\mathfrak m}$ in $O_F$, such that for each arithmetic point $P$ of $\Lambda_L$ with $P \cap \Lambda=P_{k, \varepsilon}$, the series
$$\sum_{{\mathfrak m}} \iota(c({\mathfrak m}, {\mathcal F}) \bmod P)N{\mathfrak m}^{-s}$$
is the Dirichlet series associated with a classical $p$-ordinary Hilbert cusp form of parallel weight $k$, level ${\mathfrak n}_0p^{r+1}$ for the order $p^r$ of $\varepsilon$ and nebentype $(\chi_p^{1-k} \psi \chi_\Lambda) \bmod P=\psi \varepsilon \omega^{1-k}$. Here $N{\mathfrak m}$ is the absolute norm of ${\mathfrak m}$. We denote this classical cusp form by $f_P$ or $\varphi_{k, \varepsilon}({\mathcal F})$ and call this the specialization of ${\mathcal F}$ at $P$. 
\end{definition}
The space of $p$-ordinary cuspidal Hida family of tame level ${\mathfrak n}_0$ and nebentype $\psi$ is finitely generated over $\Lambda_L$ (Proposition 1.5.2; p.~552 of \cite{W88}) and is equipped with Hecke operators $T_{\mathfrak l}, U_{\mathfrak p}$ and diamond operators $S({\mathfrak l})$ for prime ideals ${\mathfrak l} \nmid {\mathfrak n}_0p$ and ${\mathfrak p} \mid {\mathfrak n}_0p$ so that the action is compatible with specializations. 

\begin{definition}[cf. p.~552 of \cite{W88}] 
\label{defn:103}
A primitive $p$-ordinary cuspidal Hida family ${\mathcal F}$ of tame level ${\mathfrak n}_0$ and nebentype $\psi$ is an eigenform for the Hecke operators $T_{\mathfrak l}$, $U_{\mathfrak p}$ and $S({\mathfrak l})$ such that for every arithmetic point $P$ of $\Lambda_L$, the specialization $f_P$ is a $p$-ordinary, $p$-stabilized newform of level divisible by ${\mathfrak n}_0$ (see p.~538 of \cite{W88} for the notion of $p$-stabilized newforms). 
\end{definition}
For such a family ${\mathcal F}$, the eigenvalue of $S({\mathfrak l})$ is $\psi({\mathfrak l})$ for each prime ideal ${\mathfrak l}$ prime to ${\mathfrak n}_0p$, and the nebentype $\psi=\psi_{\mathcal F}$ is referred to as ``the central character of ${\mathcal F}$" in \cite{DG}. 

\subsection{Galois representations attached to ordinary Hida families}
\label{sec:103}
To each primitive $p$-ordinary cuspidal Hida family ${\mathcal F}$ of tame level ${\mathfrak n}_0$ and nebentype $\psi_{\mathcal F}$, Wiles associated a continuous irreducible Galois representation $\rho_{\mathcal F}: G_F \rightarrow GL_2(L)$ which is unramified outside ${\mathfrak n}_0p$ and such that
\begin{alignat}{4}
 {\rm Tr}(\rho_{\mathcal F})({\rm Frob}_{\mathfrak l}) &= c({\mathfrak l}, {\mathcal F}), & \ \ {\rm det}(\rho_{\mathcal F})({\rm Frob}_{\mathfrak l}) &= \psi_{\mathcal F}({\mathfrak l})\chi_\Lambda({\rm Frob}_{\mathfrak l}) \label{eq:trace}
\end{alignat}
for all prime ideals ${\mathfrak l} \nmid {\mathfrak n}_0p$ (Theorem 2.2.1; p.~562 of \cite{W88}). Here ${\rm Frob}_{\mathfrak l}$ is a geometric Frobenius at ${\mathfrak l}$. In the case of $F={\mathbb{Q}}$, this is due to Hida \cite{H86}. In particular this tells us ${\rm det}(\rho_{\mathcal F})=\psi_{\mathcal F}\chi_\Lambda$ (apply Chebotarev density theorem). Specializing the determinant at $P_{k, \varepsilon}$, we obtain a character $\psi_{\mathcal F} \varepsilon \omega^{1-k} \chi_p^{k-1}$. If the specialization $\varphi_{k, \varepsilon}({\mathcal F})$ is a classical cusp form, its nebentype is $\psi_{\mathcal F} \varepsilon \omega^{1-k}$ which is in accordance with Definition \ref{defn:102}. Note that if $k=1$, $\varphi_{1, \varepsilon}({\rm det}(\rho_{\mathcal F}))=\psi_{\mathcal F}\varepsilon$ is a finite order character. 

We give an account of residual representation. Let ${\mathfrak m}={\mathfrak m}_{\Lambda_L}$ be the maximal ideal of the local ring $\Lambda_L$ and $\mathbb{F}=\Lambda_L/{\mathfrak m}$ which is a finite field of characteristic $p$. It is known that $\rho_{\mathcal F}$ preserves a $\Lambda_L$-lattice ${\mathcal L}$ in $L^2$. We regard $\rho_{\mathcal F}$ as $\rho_{\mathcal F}: G_F \rightarrow {\rm End}_{\Lambda_L}({\mathcal L})$ and consider the reduction $\bar{\rho}_{\mathcal F}: G_F \rightarrow {\rm End}_{\mathbb{F}}({\mathcal L}/{\mathfrak m}{\mathcal L})$. We show that the reduction $\bar{\rho}_{\mathcal F}$ takes values in $GL_2({\mathbb{F}})$. Let $P$ be a height one prime ideal of $\Lambda_L$. The localization $\Lambda_{L, P}$ of $\Lambda_L$ at $P$ is a valuation ring. Let ${\mathcal L}_P={\mathcal L} \otimes_{\Lambda_L} \Lambda_{L, P}$. Since ${\mathcal L}$ is a $\Lambda_L$-lattice, we have ${\mathcal L}_P \otimes_{\Lambda_{L, P}} L \cong L^2$. Thus ${\mathcal L}_P$ is free of rank two over $\Lambda_{L, P}$ and we obtain $\rho_{\mathcal F}: G_F \rightarrow GL_2(\Lambda_{L, P})$. Reducing $\rho_
{\mathcal F}$ modulo $P$, we have a continuous representation $\rho_{\mathcal F} \bmod P: G_F \rightarrow GL_2(\Lambda_{L, P}/P\Lambda_{L, P})$. Secondly, let $\Lambda_L'$ be the integral closure of $\Lambda/(P \cap \Lambda)$ in $\Lambda_{L, P}/P\Lambda_{L, P}$. Then $\Lambda_L/P$ is contained in $\Lambda_L'$ which is a valuation ring and $\Lambda_{L, P}/P\Lambda_{L, P}$ is the fraction field of $\Lambda_L'$. The inverse image of the maximal ideal of $\Lambda_L'$ under the map $\Lambda_L \rightarrow \Lambda_L/P \subset \Lambda_L'$ is ${\mathfrak m}={\mathfrak m}_{\Lambda_L}$. By inductive hypothesis on the height of $P$, we have a representation $\bar{\rho}_{\mathcal F}: G_F \rightarrow GL_2({\mathbb{F}})$. Since the trace of $\bar{\rho}_{\mathcal F}$ is characterized by (\ref{eq:trace}), its semi-simplification $\bar{\rho}_{\mathcal F}^{\rm ss}: G_F \rightarrow GL_2({\mathbb{F}})$ does not depend on the choice of ${\mathcal L}$. We call $\bar{\rho}_{\mathcal F}^{\rm ss}$ the residual representation of ${\mathcal F}$. 

We also know that $\rho_{\mathcal F}$ is $p$-ordinary, that is, for each prime ${\mathfrak p}$ sitting above $p$, the restriction of $\rho_{\mathcal F}$ to the decomposition group $D_{\mathfrak p}$ at ${\mathfrak p}$ is
\begin{align*}
 \rho_{\mathcal F}|_{D_{\mathfrak p}} & \cong \left(
 \begin{array}{cc}
  {\mathcal E}_{\mathfrak p} & * \\
  0 & {\mathcal D}_{\mathfrak p}
 \end{array}
 \right) 
\end{align*}
where ${\mathcal E}_{\mathfrak p}, {\mathcal D}_{\mathfrak p}: D_{\mathfrak p} \rightarrow \Lambda_L^\times$ are characters with ${\mathcal D}_{\mathfrak p}$ unramified and ${\mathcal D}_{\mathfrak p}({\rm Frob}_{\mathfrak p})=c({\mathfrak p}, {\mathcal F})$ (Theorem 2.2.2; p.~562 of \cite{W88}). This in particular implies that ${\mathcal E}_{\mathfrak p}={\rm det}(\rho_{\mathcal F})$ on the inertia group $I_{\mathfrak p}$ at ${\mathfrak p}$. Let $\overbar{\mathcal E}_{\mathfrak p}: D_{\mathfrak p} \rightarrow {\mathbb{F}}^\times$ denote the reduction of ${\mathcal E}_{\mathfrak p}$ modulo the maximal ideal of $\Lambda_L$. The same for $\overbar{\mathcal D}_{\mathfrak p}$. 
\begin{definition} 
\label{defn:104}
${\mathcal F}$ is said to be $p$-distinguished if $\overbar{\mathcal E}_{\mathfrak p} \neq \overbar{\mathcal D}_{\mathfrak p}$ for all prime ideals ${\mathfrak p}$ lying over $p$. 
\end{definition}

\subsection{Families and communities}
\label{sec:106}
\begin{definition} 
\label{defn:109}
A Hida community is the set $\left\{{\mathcal F} \right\}$ of primitive $p$-ordinary cuspidal Hida families having the same tame level and the same residual representation. 
\end{definition}
We note that a Hida community is always a finite set. By definition, it makes sense to speak of the residual representation and $p$-distinguishability of a Hida community.  

\section{Weight one specializations of a Hida family}
\label{sec:110}
\subsection{CM and non-CM families}
\label{sec:111}
\begin{definition} 
\label{defn:105}
A primitive $p$-ordinary cuspidal Hida family ${\mathcal F}$ is of CM type (or a CM family) if there exists a totally imaginary quadratic extension $K$ of $F$ such that $\rho_{\mathcal F} \cong \rho_{\mathcal F} \otimes \varepsilon_{K/F}$, where $\varepsilon_{K/F}: G_F \rightarrow \left\{\pm 1 \right\}$ is the quadratic character corresponding to $K/F$. We say that ${\mathcal F}$ is a non-CM family if ${\mathcal F}$ is not of CM type. 
\end{definition}
A CM family is explicitly constructed by a $\Lambda$-adic Hecke character of $K$. We refer the reader to Section 4 of \cite{BGV} for the construction. We note that a CM family has infinitely many classical specializations including in weight one, and all of them have CM in the classical sense. 

Hida's control theorem (Theorem 3; p.~298 of \cite{H88}) tells us that if $f$ is a $p$-ordinary $p$-stabilized newform of weight $k \geq 2$, then there is a unique primitive $p$-ordinary cuspidal Hida family ${\mathcal F}$ specializing to $f$. In addition, if $f$ has CM in the classical sense, one can explicitly construct a CM family passing through $f$, and by the uniqueness this CM family is the only one specializing to $f$. Therefore, any arithmetic specialization of a non-CM family does not have CM in the classical sense. 

\begin{remark}
\label{rmk:106}
If $f$ is of weight $1$, Wiles showed the existence of a primitive $p$-ordinary cuspidal Hida family ${\mathcal F}$ specializing to $f$ as well (Theorem 3; p.~532 of \cite{W88}), but the uniqueness of ${\mathcal F}$ is not guaranteed. 
\end{remark}

\subsection{Finiteness result for non-CM families}
\label{sec:112}
By definition, a primitive $p$-ordinary cuspidal Hida family ${\mathcal F}$ admits infinitely many classical specializations of weight at least two. If ${\mathcal F}$ is of CM type, then ${\mathcal F}$ contains infinitely many classical weight one specializations as well. The proof of Balasubramanyam, Ghate and Vatsal's theorem implies the following finiteness result. 
\begin{theorem}[cf. Theorem 3; p.~517 of \cite{BGV}]
\label{theo:107}
A primitive $p$-ordinary cuspidal Hida family ${\mathcal F}$ admits infinitely many classical weight one specializations if and only if ${\mathcal F}$ is of CM type. 
\end{theorem}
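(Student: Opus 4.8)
The plan is to prove the two implications separately; the forward implication is a direct consequence of the CM construction, while the converse carries the real content.

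For the \emph{if} direction, suppose ${\mathcal F}$ is of CM type by a totally imaginary quadratic extension $K/F$. By the construction recalled in Section 4 of \cite{BGV}, ${\mathcal F}$ arises from a $\Lambda$-adic Hecke character $\Phi$ of $K$ with $\rho_{\mathcal F} \cong {\rm Ind}_K^F(\Phi)$. For each finite order character $\varepsilon$ of ${\mathbf G}$, the specialization $\Phi \bmod P_{1, \varepsilon}$ is a finite order Hecke character $\phi_\varepsilon$ of $K$, and $\varphi_{1, \varepsilon}({\mathcal F})$ is the classical weight one Hilbert cusp form with CM by $K$ whose Artin representation is ${\rm Ind}_K^F(\phi_\varepsilon)$. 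As ${\mathbf G} \cong {\mathbb Z}_p$ admits infinitely many finite order characters $\varepsilon$, and these produce specializations of pairwise distinct nebentype $\psi_{\mathcal F}\varepsilon$, the family ${\mathcal F}$ has infinitely many classical weight one specializations.

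For the \emph{only if} direction, assume ${\mathcal F}$ has infinitely many classical weight one specializations; I would show ${\mathcal F}$ is of CM type. The argument, following Ghate--Vatsal \cite{GV} for $F = {\mathbb Q}$ and its extension in \cite{BGV}, is local at $p$. First I would establish the classicality criterion: a weight one specialization $\varphi_{1, \varepsilon}({\mathcal F})$ is classical \emph{if and only if}, for the height one prime $P$ of $\Lambda_L$ with $P \cap \Lambda = P_{1, \varepsilon}$, the representation $\rho_{\mathcal F} \bmod P$ is unramified at every prime ${\mathfrak p} \mid p$. The forcing direction rests on Ohta \cite{O84} and Rogawski--Tunnell \cite{RT}: the Artin representation $\rho_f$ attached to a classical weight one form has finite image, so its restriction to $I_{\mathfrak p}$ is of finite order; the reverse direction is a classicality statement for ordinary weight one specializations drawn from the theory of the local behaviour of ordinary $\Lambda$-adic representations. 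Next, using the $p$-ordinary shape of $\rho_{\mathcal F}|_{D_{\mathfrak p}}$ recalled in Section \ref{sec:103}, I would note that at a weight one point both diagonal characters become finite order on $I_{\mathfrak p}$ --- indeed ${\mathcal E}_{\mathfrak p}|_{I_{\mathfrak p}}$ agrees with $\det \rho_{\mathcal F}|_{I_{\mathfrak p}} = (\psi_{\mathcal F}\chi_\Lambda)|_{I_{\mathfrak p}}$, which specializes to the finite order character $(\psi_{\mathcal F}\varepsilon)|_{I_{\mathfrak p}}$ --- so unramifiedness at ${\mathfrak p}$ is governed by the vanishing modulo $P$ of a single element $\delta_{\mathfrak p} \in \Lambda_L$ built from the off-diagonal extension class together with ${\mathcal E}_{\mathfrak p}/{\mathcal D}_{\mathfrak p}$ on $I_{\mathfrak p}$. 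Since infinitely many $\varepsilon$ give classical specializations, $\delta_{\mathfrak p}$ lies in infinitely many of the resulting height one primes $P$; as a nonzero element of $\Lambda_L$ can belong to only finitely many height one primes, this forces $\delta_{\mathfrak p} = 0$ for every ${\mathfrak p} \mid p$.

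It remains to deduce CM from this identical vanishing, namely from the statement that $\rho_{\mathcal F}$ is unramified (split) at every prime above $p$ as a $\Lambda_L$-adic representation. This is the heart of \cite{BGV}: a $p$-ordinary family whose local filtration at $p$ splits globally has complex multiplication, the splitting propagating to a global constraint that confines the image of $\rho_{\mathcal F}$ to the normalizer of a torus. I expect this final step to be the main obstacle, since in the Hilbert setting one must run the argument simultaneously at all primes ${\mathfrak p} \mid p$ and control the contributions of the unit and class groups of $K$ without appealing to Leopoldt's conjecture; accordingly I would import it directly from \cite{BGV} rather than reprove it.
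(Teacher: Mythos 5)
Your ``if'' direction is correct and agrees with the paper. The ``only if'' direction has a genuine gap at its final step. You reduce to the statement that $\rho_{\mathcal F}|_{D_{\mathfrak p}}$ splits for every ${\mathfrak p} \mid p$ and then propose to import ``splitting implies CM'' from \cite{BGV}. But that implication is not available here as a black box: as Remark \ref{rmk:108} of this paper explains, Balasubramanyam--Ghate--Vatsal prove it by first applying a modularity lifting theorem (Theorem 3 of \cite{Sa10}) to convert local splitting into classical weight one specializations, and this forces their hypotheses (A1) ($p$ splits completely in $F$) and (A2) ($p$-distinguishedness, and absolute irreducibility of $\bar{\rho}_{\mathcal F}$ restricted to ${\rm Gal}(\overbar{F}/F(\zeta_p))$). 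Theorem \ref{theo:107} assumes neither, so quoting \cite{BGV} proves only a strictly weaker statement. Moreover, the step of \cite{BGV} that passes from ``infinitely many classical weight one specializations'' to ``CM'' is exactly the assertion you are trying to prove, so your reduction travels in a circle through the theorem itself. The paper's proof avoids all of this by arguing globally and directly: classical weight one specializations give Artin representations of dihedral or exceptional projective type; only finitely many specializations can be exceptional (as in \cite{GV}); the dihedral ones satisfy $\rho_f \cong \varepsilon_{K/F} \otimes \rho_f$ with $K$ of bounded discriminant, so a single $K$ serves infinitely many of them; then ${\rm Tr}\,\rho_{\mathcal F}({\rm Frob}_{\mathfrak l})$ lies in infinitely many height one primes of $\Lambda_L$, hence vanishes, for every ${\mathfrak l}$ inert in $K$ and prime to ${\mathfrak n}_0p$; irreducibility then gives $\rho_{\mathcal F} \cong \varepsilon_{K/F} \otimes \rho_{\mathcal F}$, and $K$ must be totally imaginary because the higher weight specializations are holomorphic discrete series. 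No local-at-$p$ analysis and no modularity lifting enters.

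There is also an error in your classicality criterion: a classical weight one specialization need \emph{not} have $\rho_{\mathcal F} \bmod P$ unramified at ${\mathfrak p} \mid p$. Indeed, the forms in property (P2) of Section \ref{sec:200} have $\rho_f(I_{\mathfrak p})$ of order at least three, and whenever $\varepsilon \neq 1$ the determinant $\psi_{\mathcal F}\varepsilon$ is already ramified at $p$. What classicality actually gives (finite image together with semisimplicity of finite-image representations in characteristic zero) is that $\rho_f|_{D_{\mathfrak p}}$ decomposes as a direct sum of two characters whose restrictions to $I_{\mathfrak p}$ have finite order --- splitting, not unramifiedness. As written, your vanishing argument for $\delta_{\mathfrak p}$ collapses, because the infinitely many classical points need not lie in the vanishing locus you define; replacing ``unramified'' by ``split'' repairs that step, but the structural gap described above remains.
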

\begin{remark}
\label{rmk:108}
\begin{enumerate}
\item It should be pointed out that in the original paper \cite{BGV}, the Iwasawa algebra $\Lambda$ is isomorphic to a power series ring of $(1+\delta)$-variable, where $\delta$ is the Leopoldt defect for $F$ and $p$. In this paper we follow Wiles' setting in \cite{W88} and consider only the cyclotomic variable. Hence our Iwasawa algebra $\Lambda$ is isomorphic to a power series ring of one variable ${\mathcal O}[[X]]$ and we can apply Weierstrass' $p$-adic preparation theorem to $\Lambda$. In particular we do not need Lemma 1 of \cite{BGV}. This makes our argument much simpler, as seen in the proof of Theorem \ref{theo:107} below. 
\item In the original theorem in \cite{BGV}, the assumptions are
\begin{itemize}
\item[(A1)] $p$ splits completely in $F$; and
\item[(A2)] the family ${\mathcal F}$ is $p$-distinguished and $\overbar{\rho}_{\mathcal F}$ is absolutely irreducible when restricted to ${\rm Gal}(\bar{F}/F(\zeta_p))$, where $\zeta_p$ is a primitive $p$-th root of unity, 
\end{itemize}
and the statement is
\begin{itemize}
\item[(C1)] $\rho_{\mathcal F}|_{D_{\mathfrak p}}$ splits for each ${\mathfrak p} \mid p$; if and only if
\item[(C2)] ${\mathcal F}$ is of CM type. 
\end{itemize} 
They had to impose the assumptions (A1) and (A2) because they used modularity lifting theorem (Theorem 3; p.~999 of \cite{Sa10}) to observe that (C1) implies any weight one specializations of ${\mathcal F}$ is classical. More precisely, (A1) is a condition that is assumed in the modularity lifting theorem (see the remark in p.~518 of \cite{BGV}), and (A2) is used to verify that they can apply modularity lifting theorem to the Galois representation attached to each weight one specialization of ${\mathcal F}$. We do not need modularity lifting theorem to establish the equivalence in Theorem \ref{theo:107}, and hence we assume neither (A1) nor (A2). 
\end{enumerate}
\end{remark}
\begin{proof}
It follows from the construction that a CM family admits infinitely many classical weight one specializations. We prove the converse. As explained in the introduction, the Galois representation attached to a classical weight one form is either dihedral or exceptional. By the same reasoning as in \cite{GV} p.~2155, we see that only finitely many classical weight one specializations of ${\mathcal F}$ are exceptional. Therefore ${\mathcal F}$ has infinitely many classical weight one specializations $f$ such that the associated Galois representation $\rho_f$ satisfies $\rho_f \cong \varepsilon_{K/F} \otimes \rho_f$ for some quadratic extension $K$ of $F$. Since the conductor of $\rho_f$ is ${\mathfrak n}_0p^{r+1}$ for some integer $r \geq 0$, we see that there is a bound on the discriminant of $K/F$. Hence there exists a quadratic extension $K$ of $F$ such that infinitely many classical weight one specializations $f$ of ${\mathcal F}$ satisfy $\rho_f \cong \varepsilon_{K/F} \otimes \rho_f$. This implies that ${\rm Tr}\rho_f({\rm Frob}_{\mathfrak l})=0$ for all primes ${\mathfrak l}$ of $F$ prime to ${\mathfrak n}_0p$ and inert in $K$. Since the intersection of infinitely many height one prime ideals of $\Lambda$ is zero, we have ${\rm Tr}\rho_{\mathcal F}({\rm Frob}_{\mathfrak l})=0$ for all such primes ${\mathfrak l}$. As for a prime ideal ${\mathfrak l}$ that splits in $K$, the equality ${\rm Tr}\rho_{\mathcal F}({\rm Frob}_{\mathfrak l})=\varepsilon_{K/F}({\rm Frob}_{\mathfrak l}){\rm Tr}\rho_{\mathcal F}({\rm Frob}_{\mathfrak l})$ is unconditional. Hence $\rho_{\mathcal F}$ and $\varepsilon_{K/F} \otimes \rho_{\mathcal F}$ have the same trace. As $\rho_{\mathcal F}$ is irreducible and hence determined by the trace, this implies $\rho_{\mathcal F} \cong \varepsilon_{K/F} \otimes \rho_{\mathcal F}$. If $K/F$ is not totally imaginary, this yields a contradiction, since the automorphic representation associated to any specializations of ${\mathcal F}$ at arithmetic points of weight $k \geq 2$ is a holomorphic discrete series. Therefore $K/F$ is totally imaginary and the family ${\mathcal F}$ has CM by $K$. 
\end{proof}
The goal of this paper is to make this finiteness result effective. 

\subsection{Projective image of the residual Galois representation}
\label{sec:113}
We recall that the residue field ${\mathbb{F}}$ of $\Lambda_L$ is a finite field of characteristic $p$. By Dickson's classification of subgroups of $PGL_2({\mathbb{F}})$, the image of $\overbar{\rho}_{\mathcal F}^{\rm ss}: G_F \rightarrow GL_2({\mathbb{F}})$ in $PGL_2({\mathbb{F}})$ is either
\begin{enumerate}
\item a subgroup of a Borel subgroup in $PGL_2({\mathbb{F}}')$ with a quadratic extension ${\mathbb{F}}'/{\mathbb{F}}$; 
\item conjugate to $PGL_2({\mathbb{F}}')$ or $PSL_2({\mathbb{F}}')$ for some subfield ${\mathbb{F}}'$ of ${\mathbb{F}}$; 
\item isomorphic to the dihedral group $D_{2m}$ of order $2m$ with $m$ prime to $p$, or the symmetric group $S_4$, the alternative groups $A_4$ or $A_5$. 
\end{enumerate}
The residual representation $\bar{\rho}_{\mathcal F}^{\rm ss}$ is reducible if (1) occurs and irreducible if (2) or (3) is the case. On the other hand, it is well known that the Galois representation $\rho_f$ attached to a classical weight one cuspidal eigenform $f$ is an irreducible totally odd Artin representation and has finite image. Let us write $\rho_f: G_F \rightarrow GL_2(O)$ for the $p$-adic integer ring $O$ of a finite extension of ${\mathbb{Q}}_p$. Then the image of $\rho_f$ in $PGL_2(O)$ is isomorphic to either $D_{2n}$ for some integer $n \geq 1$ or $A_4, S_4, A_5$. The weight one form $f$ is said to be of dihedral type in the first case and of exceptional type in the latter three cases. 
\begin{lemma}
\label{lem:110}
Suppose that a Hida family ${\mathcal F}$ has a classical weight one specialization $f$. 
\begin{enumerate}
\item If $f$ is of dihedral type and the image of $\rho_f$ in $PGL_2(O)$ is isomorphic to $D_{2n}$, then the image of $\overbar{\rho}_{\mathcal F}^{\rm ss}$ in $PGL_2({\mathbb{F}})$ is isomorphic to $D_{2m}$, where $m$ is the prime-to-$p$ part of $n$. 
\item If $f$ is of exceptional type, the image of $\overbar{\rho}_{\mathcal F}^{\rm ss}$ in $PGL_2({\mathbb{F}})$ is isomorphic to that of $\rho_f$ in $PGL_2(O)$. 
\end{enumerate}
\end{lemma}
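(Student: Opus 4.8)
The plan is to deduce both assertions from the single relation $\bar{\rho}_f^{\rm ss} \cong \bar{\rho}_{\mathcal F}^{\rm ss}$ together with an analysis of how a finite subgroup of $PGL_2$ behaves under reduction modulo $p$. First I would identify $f$ with the weight one specialization $\varphi_{1,\varepsilon}({\mathcal F})$, so that $\rho_f$ is (a model over $O$ of) the reduction $\rho_{\mathcal F} \bmod P$ at the corresponding height one prime $P$; by (\ref{eq:trace}) the traces satisfy ${\rm Tr}\,\rho_f({\rm Frob}_{\mathfrak l}) = c({\mathfrak l},{\mathcal F}) \bmod P$ for all ${\mathfrak l} \nmid {\mathfrak n}_0 p$. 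Reducing one further step modulo the maximal ideal along the tower $\Lambda_L \to \Lambda_L/P \subset \Lambda_L'$ constructed in Section \ref{sec:103} --- equivalently, applying Brauer--Nesbitt to the residual traces $\overline{c({\mathfrak l},{\mathcal F})}$, which are common to both representations --- gives $\bar{\rho}_f^{\rm ss} \cong \bar{\rho}_{\mathcal F}^{\rm ss}$ after extending scalars to a common finite field. Since the isomorphism type of a projective image is unchanged under scalar extension, it then suffices to compute the projective image of $\bar{\rho}_f^{\rm ss}$ from that of $\rho_f$.

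The mechanism I would exploit is that the reduction map $GL_2(O) \to GL_2({\mathbb F}')$ has kernel $1 + M_2({\mathfrak m}_O)$, a pro-$p$ group, so the induced map $PGL_2(O) \to PGL_2({\mathbb F}')$ has pro-$p$ kernel as well. Writing $G$ for the finite image of $\rho_f$ in $PGL_2(O)$, the image $\bar G$ of the reduced representation is therefore $G/N$ for a normal $p$-subgroup $N$, whence $N$ is contained in the maximal normal $p$-subgroup $O_p(G)$. In the exceptional case $G \in \{A_4, S_4, A_5\}$ and, since $p$ is odd, $O_p(G) = 1$; thus $N = 1$, reduction is injective on $G$, and because none of these groups embeds in the image of a Borel subgroup of $PGL_2({\mathbb F}')$, the representation $\bar{\rho}_f$ is irreducible and equal to its semisimplification. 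This yields assertion (2).

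For the dihedral case I would use that dihedral type means $\rho_f \cong {\rm Ind}_K^F(\varphi)$, so that $\rho_f|_{G_K} \cong \varphi \oplus \varphi^\sigma$ and the projective image of $G_K$ is the cyclic group generated by $\psi := \varphi \cdot (\varphi^\sigma)^{-1}$, of order $n$, on which the outer element $\sigma$ acts by inversion, producing $D_{2n}$. Reducing, $\bar\psi$ is the reduction of the order-$n$ character $\psi$; since reduction on roots of unity is injective away from $p$ and annihilates the $p$-power part, $\bar\psi$ has order exactly $m$, the prime-to-$p$ part of $n$, so the projective image of $\bar{\rho}_f$ is $D_{2m}$. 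Finally ${\rm Ind}_K^F(\bar\varphi)$ is semisimple because $[G_F : G_K] = 2$ is prime to $p$, whence $\bar{\rho}_f^{\rm ss} = \bar{\rho}_f$ and assertion (1) follows.

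The step I expect to be the main obstacle is pinning down the kernel $N$ exactly rather than merely bounding it: the inclusion $N \subseteq O_p(G)$ is formal, but showing that reduction retains no spurious $p$-part is delicate, because a dihedral group whose rotation subgroup has order divisible by $p$ does occur inside $PGL_2({\mathbb F}')$ --- a unipotent element together with a suitable involution generates such a group inside a Borel. Ruling this out, i.e. establishing $N = O_p(G)$, is precisely what the explicit order computation for $\bar\psi$ accomplishes in the dihedral case and what $O_p(G)=1$ supplies for free in the exceptional case.
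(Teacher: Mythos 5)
Your proof is correct, and it is essentially the argument the paper has in mind: the paper gives no proof of Lemma \ref{lem:110} beyond citing Lemma 4.2 of \cite{DG}, and your three steps (identifying $\bar{\rho}_f^{\rm ss} \cong \bar{\rho}_{\mathcal F}^{\rm ss}$ over a common residue field via Brauer--Nesbitt, using that the kernel of $PGL_2(O) \rightarrow PGL_2({\mathbb F}')$ is pro-$p$ so the residual projective image is the quotient of $D_{2n}$, $A_4$, $S_4$ or $A_5$ by a normal $p$-subgroup, and pinning that subgroup down via the order of the reduction of $\varphi/\varphi^\sigma$ in the dihedral case) are precisely how that lemma is proved there. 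Your closing observation---that the explicit character computation is what excludes a residual image of the form $D_{2mp^b}$ with $b>0$, which does occur inside a Borel of $PGL_2({\mathbb F}')$---correctly identifies the one point where the formal bound $N \subseteq O_p(G)$ alone would not suffice.
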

This lemma can be shown in the same way as Lemma 4.2; p.~674 of \cite{DG}. For the purpose of giving an upper bound on the number of classical weight one specializations, it is enough to consider families as in Lemma \ref{lem:110}. 
\begin{definition}
\label{defn:111}
A Hida family ${\mathcal F}$ is residually of dihedral (resp. exceptional) type if the image of $\overbar{\rho}_{\mathcal F}^{\rm ss}: G_F \rightarrow GL_2({\mathbb{F}})$ in $PGL_2({\mathbb{F}})$ is isomorphic to $D_{2m}$ (resp. $A_4, S_4$ or $A_5$). 
\end{definition}
Notice that if ${\mathcal F}$ is residually of dihedral or exceptional type, then $\bar{\rho}_{\mathcal F}=\bar{\rho}_{\mathcal F}^{\rm ss}$ is irreducible and unique up to isomorphism. We also note that if ${\mathcal F}$ is residually of dihedral type, then any classical weight one specialization of ${\mathcal F}$ (if exists) has to be of dihedral type. The same for each of $A_4, S_4$ and $A_5$ type (cf. Lemma 4.5; p.~675 of \cite{DG}). 

\section{Families residually of dihedral type}
\label{sec:200}
\subsection{Setting and the main theorem}
\label{sec:201}
We consider a primitive $p$-ordinary cuspidal Hida family ${\mathcal F}$ of tame level ${\mathfrak n}_0$ and nebentype $\psi_{\mathcal F}$ satisfying the following properties: 
\begin{itemize}
\item[(P1)] ${\mathcal F}$ is residually of dihedral type: say, there exists a quadratic extension $K$ of $F$ such that $\bar{\rho}_{\mathcal F}$ is equivalent to the induced representation ${\rm Ind}_K^F(\bar{\varphi})$ by a character $\bar{\varphi}: G_K={\rm Gal}(\bar{F}/K) \rightarrow {\mathbb{F}}^\times$; 
\item[(P2)] ${\mathcal F}$ has a classical weight one specialization $f$ such that the associated representation $\rho_f: G_F \rightarrow GL_2(O)$ ($O$ is the $p$-adic integer ring of a suitable finite extension of ${\mathbb{Q}}_p$) satisfies: $\rho_f(I_{\mathfrak p})$ has order at least three for each prime ${\mathfrak p}$ of $F$ lying over $p$. 
\end{itemize}
\begin{remark}
\label{rmk:201}
The first property (P1) implies that $\bar{\rho}_{\mathcal F}$ is irreducible, and hence $\bar{\varphi} \neq \bar{\varphi}^\sigma$. Here $\sigma \in G_F$ is a generator of ${\rm Gal}(K/F)$ and $\bar{\varphi}^\sigma$ is defined by $\bar{\varphi}^\sigma(g)=\bar{\varphi}(\sigma g \sigma^{-1})$ for each $g \in G_K$ (this is Mackey's criterion for irreducibility of induced representations). 
\end{remark}
In order to state the main theorem, we need two lemmas: 
\begin{lemma}
\label{rmk:221}
The representation $\rho_f$ in the second property {\rm (P2)} is of the form ${\rm Ind}_K^F(\varphi)$ for a finite order character $\varphi: G_K \rightarrow O^\times$ which is a lift of $\bar{\varphi}$. 
\end{lemma}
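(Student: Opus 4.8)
The plan is to reduce everything to showing that $\rho_f$ is isomorphic to its own twist by the quadratic character $\varepsilon_{K/F}$ attached to $K/F$, and then to apply the standard dictionary between quadratic-twist-invariance and induction. Indeed, $\rho_f$ is irreducible with finite (hence semisimple) image, so once I know $\rho_f \cong \rho_f \otimes \varepsilon_{K/F}$, a two-dimensional irreducible representation fixed by a nontrivial quadratic twist is automatically induced from the corresponding index-two subgroup $G_K$: $\rho_f \cong {\rm Ind}_K^F(\varphi)$ for some character $\varphi \colon G_K \to O^\times$, which has finite order because $\rho_f$ does. Reducing this isomorphism modulo the maximal ideal $\mathfrak m_O$ of $O$ and comparing with (P1) shows ${\rm Ind}_K^F(\varphi \bmod \mathfrak m_O) \cong {\rm Ind}_K^F(\bar\varphi)$, whence $\varphi \bmod \mathfrak m_O \in \{\bar\varphi, \bar\varphi^\sigma\}$; after possibly replacing $\varphi$ by its conjugate $\varphi^\sigma$ (which induces the same representation) I may assume $\varphi$ lifts $\bar\varphi$, as required.

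To obtain the twist-invariance I would compare the (semisimple) traces of $\rho_f$ and $\rho_f \otimes \varepsilon_{K/F}$. On $G_K$ the character $\varepsilon_{K/F}$ is trivial, so the two traces coincide there, and the only thing to prove is that ${\rm Tr}\,\rho_f(g) = 0$ for every $g \in G_F \setminus G_K$. Modulo $\mathfrak m_O$ this is automatic: by (P1) the reduction $\bar\rho_{\mathcal F} \cong {\rm Ind}_K^F(\bar\varphi)$ has trace supported on $G_K$, so ${\rm Tr}\,\rho_f(g) \equiv 0 \pmod{\mathfrak m_O}$. Thus the entire difficulty is to promote this congruence to an exact equality, and this is the step I expect to be the main obstacle.

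For the upgrade I would exploit the finiteness of the image together with the projective structure. Since $\mathcal F$ is residually of dihedral type, the remark following Lemma \ref{lem:110} shows that $f$ is itself of dihedral type, so the projective image of $\rho_f$ is a dihedral group $D_{2n}$ surjecting onto that of $\bar\rho_{\mathcal F}$, which is $D_{2m}$. Because $\bar\rho_{\mathcal F}|_{G_K} \cong \bar\varphi \oplus \bar\varphi^\sigma$ with $\bar\varphi \neq \bar\varphi^\sigma$ (Remark \ref{rmk:201}), the image of $G_K$ lies in the rotation subgroup $C_m$, so each $g \in G_F \setminus G_K$ reduces to a reflection, an element of order two. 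The kernel of the induced surjection $D_{2n} \to D_{2m}$ has order equal to the $p$-part of $n$; as $p$ is odd this kernel is a normal subgroup consisting of rotations, so the surjection carries reflections to reflections and $g$ already projects to a reflection of order two in $D_{2n}$. Consequently $\rho_f(g)$ has projective order two, and since its image is finite its eigenvalues $\zeta_1, \zeta_2$ are roots of unity whose ratio $\zeta_1/\zeta_2$ has order equal to the projective order, namely two; hence $\zeta_1/\zeta_2 = -1$ and ${\rm Tr}\,\rho_f(g) = \zeta_1 + \zeta_2 = 0$. The crux is exactly this passage from residual to exact vanishing: finiteness of the image forces root-of-unity eigenvalues, and the dihedral picture (with $p$ odd preventing the rotation and reflection cosets from merging under reduction) pins their ratio to $-1$.
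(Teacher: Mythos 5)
Your proof is correct and follows essentially the same route as the paper's: show that for $g \notin G_K$ the projective image of $\rho_f(g)$ is a reflection, hence of order two (via the dihedral projective image $D_{2n}$ and the fact that the kernel of its reduction to $D_{2m}$ has odd $p$-power order and so lies in the rotations), deduce ${\rm Tr}\,\rho_f(g)=0$ off $G_K$ and thus $\rho_f \cong \rho_f \otimes \varepsilon_{K/F}$, and conclude that $\rho_f$ is induced from a finite order character of $G_K$. The only differences are cosmetic: where you invoke the standard Clifford-theory dictionary between invariance under a quadratic twist and induction, the paper proves that step by an explicit intertwiner/eigenspace computation, and your closing argument that $\varphi$ may be taken to lift $\bar\varphi$ (after possibly replacing $\varphi$ by $\varphi^\sigma$) makes explicit a point the paper leaves implicit.
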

\begin{proof}
With Lemma \ref{lem:110} in mind, the image of $\rho_f$ in $PGL_2(O)$ is $D_{2n}$ for an integer $n \geq 1$. Let $C_n$ be a cyclic subgroup of order $n$ in $D_{2n}$. If $n \leq 2$, such a subgroup is not uniquely determined, but we make the choice so that the image of $C_n \subset PGL_2(O)$ in $PGL_2({\mathbb{F}})$ is $\bar{\rho}_{\mathcal F}(G_K)$. Then we see that the image of $\rho_f(g)$ in $PGL_2(O)$ is contained in $C_n$ if and only if $g \in G_K$. This implies that ${\rm Tr}(\rho_f)(g)=0$ if $g \notin G_K$, and hence we have ${\rm Tr}(\rho_f)={\rm Tr}(\rho_f \otimes \varepsilon_{K/F})$ (recall that $\varepsilon_{K/F}: G_F \rightarrow \left\{\pm 1 \right\}$ is the quadratic character corresponding to $K/F$). Since $\rho_f$ is irreducible, we conclude that $\rho_f \cong \rho_f \otimes \varepsilon_{K/F}$. 

By definition there exists a matrix $M \in GL_2(O)$ such that $M\rho_f(g)M^{-1}=\varepsilon_{K/F}(g)\rho_f(g)$ for all $g \in G_F$. Since $\varepsilon_{K/F}^2=1$, we have
\begin{alignat*}{3}
 M^2\rho_f(g)M^{-2} &= \varepsilon_{K/F}(g)^2\rho_f(g) &= \rho_f(g). 
\end{alignat*}
Irreducibility of $\rho_f$ implies that $M^2$ is a scalar matrix. We change the basis of $\rho_f$ if necessary and assume that $M=\left(
\begin{smallmatrix}
 \alpha & \beta \\
 0 & \delta
\end{smallmatrix}
\right)$. Let $\rho_f(\sigma)=\left(
\begin{smallmatrix}
 a & b \\
 c & d
\end{smallmatrix}
\right)$ for our fixed generator $\sigma \in G_F$ of ${\rm Gal}(K/F)$. Then we have $M\rho_f(\sigma)M^{-1}=-\rho_f(\sigma)$ and thus
$$2\alpha a=-\beta c, \ b(\alpha +\delta)+\beta(a+d)=0, \ c(\alpha+\delta)=0, \ 2\delta d=-\beta c. $$
If $\alpha+\delta \neq 0$ then $c=0$ and $a=d=0$, a contradiction. Therefore $\alpha=-\delta$ and $M$ has two distinct eigenvalues $\alpha$ and $-\alpha$. Again we change the basis of $\rho_f$ and assume that $M=\left(
\begin{smallmatrix}
 \alpha & 0 \\
 0 & -\alpha
\end{smallmatrix}
\right)$. Let $V$ denote the representation space of $\rho_f$. Then $M$ acts semi-simply on $V$. Let $V(\alpha)$ (resp. $V(-\alpha)$) be the $\alpha$-eigenspace (resp. $(-\alpha)$-eigenspace) of $M$. If $v \in V(\pm \alpha)$ then $M\rho_f(g)v=\pm \alpha \varepsilon_{K/F}(g)\rho_f(g)v$. Therefore if $g \in G_K$ then $\rho_f(g)$ preserves each of $V(\pm \alpha)$, and if $g \notin G_F$ then $\rho_f(g)$ permutes $V(\alpha)$ and $V(-\alpha)$. Now let $\varphi: G_K \rightarrow O^\times$ be the character defined by $\rho_f(g)v=\varphi(g)v$ for all $v \in V(\alpha)$. Then a direct calculation shows that $\rho_f$ is induced by $\varphi$.  
\end{proof}
\begin{lemma} 
\label{lem:202}
Each prime ${\mathfrak p}$ of $F$ lying over $p$ splits in $K$. 
\end{lemma}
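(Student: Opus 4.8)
The plan is to argue by contradiction: assuming some prime $\mathfrak p \mid p$ is inert or ramified in $K$, I will show that condition (P2) is violated. The starting point is that $\rho_f$ is the weight one specialization of the $p$-ordinary family $\mathcal F$, so it inherits the ordinary shape recorded in Section \ref{sec:103}: $\rho_f|_{D_{\mathfrak p}} \cong \left(\begin{smallmatrix} \mathcal E_{\mathfrak p} & * \\ 0 & \mathcal D_{\mathfrak p}\end{smallmatrix}\right)$ with $\mathcal D_{\mathfrak p}$ unramified. Since $\rho_f$ is an Artin representation with finite image, $\rho_f|_{I_{\mathfrak p}}$ is a representation of a finite group over a field of characteristic zero, hence semisimple; combined with the fact that $\mathcal D_{\mathfrak p}$ is unramified, this forces $\rho_f|_{I_{\mathfrak p}} \cong \mathbf 1 \oplus \bigl(\mathcal E_{\mathfrak p}|_{I_{\mathfrak p}}\bigr)$. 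Consequently $\rho_f(I_{\mathfrak p})$ is cyclic of order equal to the order of the character $\mathcal E_{\mathfrak p}|_{I_{\mathfrak p}}$, and hypothesis (P2) asserts that this order is at least three.

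Next I would compute the same restriction through the induced description $\rho_f \cong {\rm Ind}_K^F(\varphi)$ from Lemma \ref{rmk:221}, distinguishing the splitting type of $\mathfrak p$ in $K$. If $\mathfrak p$ is inert, then the local extension is unramified, so $I_{\mathfrak p} \subset G_K$ and Mackey's formula gives $\rho_f|_{I_{\mathfrak p}} \cong \varphi|_{I_{\mathfrak p}} \oplus \varphi^\sigma|_{I_{\mathfrak p}}$. A representative of $\sigma$ may be chosen inside $D_{\mathfrak p}$, where it normalizes $I_{\mathfrak p}$, so the two characters $\varphi|_{I_{\mathfrak p}}$ and $\varphi^\sigma|_{I_{\mathfrak p}}$ are conjugate and in particular have the same order (note $\varphi^\sigma$ is independent of the chosen lift, as $\varphi$ factors through $G_K^{\mathrm{ab}}$). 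Comparing with $\mathbf 1 \oplus \bigl(\mathcal E_{\mathfrak p}|_{I_{\mathfrak p}}\bigr)$, one of the two summands is trivial, hence both are, so $\rho_f(I_{\mathfrak p})$ is trivial, contradicting (P2).

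If instead $\mathfrak p$ ramifies in $K$, then $I_{\mathcal P} = I_{\mathfrak p}\cap G_K$ has index two in $I_{\mathfrak p}$ and $\rho_f|_{I_{\mathfrak p}} \cong {\rm Ind}_{I_{\mathcal P}}^{I_{\mathfrak p}}\bigl(\varphi|_{I_{\mathcal P}}\bigr)$. The reducibility imposed by the ordinary shape means that $\varphi|_{I_{\mathcal P}}$ is invariant under the nontrivial coset, so this induction splits as $\widetilde\varphi \oplus \widetilde\varphi\,\varepsilon_{K/F}|_{I_{\mathfrak p}}$, where $\varepsilon_{K/F}|_{I_{\mathfrak p}}$ is the quadratic character of $I_{\mathfrak p}/I_{\mathcal P}$, nontrivial because $K/F$ is ramified at $\mathfrak p$. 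Matching this with $\mathbf 1 \oplus \bigl(\mathcal E_{\mathfrak p}|_{I_{\mathfrak p}}\bigr)$ forces one summand to be trivial, whence $\mathcal E_{\mathfrak p}|_{I_{\mathfrak p}} = \varepsilon_{K/F}|_{I_{\mathfrak p}}$ has order exactly two; thus $\rho_f(I_{\mathfrak p})$ has order two, again contradicting (P2). Having excluded both non-split possibilities, $\mathfrak p$ must split in $K$.

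The delicate points are all local at $p$: transporting the ordinary shape from $\rho_{\mathcal F}$ to the specialization $\rho_f$ and reading off that $\rho_f|_{I_{\mathfrak p}}$ is semisimple with a trivial summand, and treating the ramified case where $I_{\mathfrak p} \not\subset G_K$, so that the clean Mackey decomposition is unavailable and one must analyse the index two induction together with the quadratic character $\varepsilon_{K/F}|_{I_{\mathfrak p}}$. This interplay is exactly what explains the numerical hypothesis in (P2): the inert case is already ruled out by order at least two, whereas the ramified case is ruled out only by order at least three.
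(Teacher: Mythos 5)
Your proof is correct, but it takes a genuinely different route from the paper's. The common starting point is the same: the $p$-ordinary shape of $\rho_f|_{D_{\mathfrak p}}$ with unramified quotient, together with finiteness of the image (hence semisimplicity on inertia), gives $\rho_f|_{I_{\mathfrak p}} \cong \mathbf{1} \oplus \bigl({\mathcal E}_{\mathfrak p}|_{I_{\mathfrak p}}\bigr)$, cyclic of order at least three by (P2). From there, however, the paper does not split into cases by decomposition type. It first proves that ${\mathfrak p}$ is \emph{unramified} in $K$ using the dihedral group theory set up in Lemma \ref{rmk:221}: the cyclic group $\rho_f(I_{\mathfrak p})$ injects into $PGL_2(O)$, a cyclic subgroup of $D_{2n}$ of order at least three must lie in the cyclic part $C_n$, and the preimage of $C_n$ under the projective representation is exactly $G_K$, whence $I_{\mathfrak p} \subset G_K$. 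It then compares $\varphi|_{I_{\mathfrak p}} \oplus \varphi^\sigma|_{I_{\mathfrak p}}$ with the ordinary shape and invokes the conductor-discriminant formula (\ref{eqn:c-d}) to see that exactly one of $\varphi, \varphi^\sigma$ ramifies above ${\mathfrak p}$, which is impossible when ${\mathfrak p}$ is inert. You replace both of these inputs by purely local Mackey decompositions: in the inert case the characters $\varphi|_{I_{\mathfrak p}}$ and $\varphi^\sigma|_{I_{\mathfrak p}}$ are conjugate under a lift of $\sigma$ in $D_{\mathfrak p}$, so triviality of one forces triviality of both; in the ramified case the index-two induction splits as $\widetilde{\varphi} \oplus \widetilde{\varphi}\,\varepsilon_{K/F}|_{I_{\mathfrak p}}$, forcing $\rho_f(I_{\mathfrak p})$ to have order exactly two. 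What your approach buys: it is self-contained (no appeal to the $D_{2n}$-structure or to the conductor-discriminant formula) and it isolates exactly where the hypothesis ``order at least three'' rather than ``at least two'' is needed, namely the ramified case --- a point the paper leaves implicit. What the paper's route buys: its intermediate conclusion that exactly one of $\varphi, \varphi^\sigma$ is ramified above each ${\mathfrak p} \mid p$ is precisely what is used right after the lemma to fix the notation ${\mathcal P}_i$ versus ${\mathcal P}_i^\sigma$, and again in Lemma \ref{lem:293}; with your argument this still follows from the same multiset comparison in the split case, but it would need to be stated explicitly.
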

\begin{proof}
Let $\rho_f={\rm Ind}_K^F(\varphi)$ as in Lemma \ref{rmk:221} and $D_{2n}$ the projective image of $\rho_f$. Since ${\mathcal F}$ is $p$-ordinary, so is $f$ and we have
\begin{align}
 \rho_f|_{I_{\mathfrak p}} & \cong \left(
\begin{array}{cc}
 \psi_{\mathcal F} \varepsilon & * \\
 0 & 1
\end{array}
\right) \label{eqn:204}
\end{align}
(here $\varepsilon: {\mathbf{G}} \rightarrow \overbar{\mathbb{Q}}_p^\times$ is the character such that $\varphi_{1, \varepsilon}({\mathcal F})=f$). Therefore $\rho_f(I_{\mathfrak p})$ is a finite cyclic group and it injects into $PGL_2(O)$. By our assumption that $\rho_f(I_{\mathfrak p})$ has order at least three, we know that the integer $n$ is at least three and the projective image of $\rho_f(I_{\mathfrak p})$ is contained in the unique cyclic group $C_n$ of order $n$ in $D_{2n}$. Therefore $I_{\mathfrak p}$ is contained in $G_K$, that is, ${\mathfrak p}$ is unramified in $K$. This observation implies
\begin{align}
 \rho_f|_{I_{\mathfrak p}} & \cong \left(
\begin{array}{cc}
 \varphi & 0 \\
 0 & \varphi^\sigma 
\end{array}
\right) \quad (\varphi^\sigma(g)=\varphi(\sigma g \sigma^{-1})). \label{eqn:203}
\end{align}
Since the representations (\ref{eqn:204}) and (\ref{eqn:203}) are equivalent, we know that at least one of $\varphi$ and $\varphi^\sigma$ has to be unramified at ${\mathfrak p}$. Suppose that both $\varphi$ and $\varphi^\sigma$ are unramified at ${\mathfrak p}$. We know conductor-discriminant formula
\begin{align}
 {\rm cond}(\rho_f) &= N_{K/F}({\rm cond}(\varphi))d_{K/F}, \label{eqn:c-d}
\end{align}
where $d_{K/F}$ (resp. $N_{K/F}$) is the relative discriminant (resp. the relative ideal norm) of $K/F$. Since the left-hand side of (\ref{eqn:c-d}) is divisible by ${\mathfrak p}$, and ${\mathfrak p}$ is unramified in $K$, $N_{K/F}({\rm cond}(\varphi))$ has to be divisible by ${\mathfrak p}$. This contradicts our assumption. Therefore exactly one of $\varphi$ and $\varphi^\sigma$ is ramified at ${\mathfrak p}$. This cannot be satisfied if ${\mathfrak p}$ is inert in $K$. 
\end{proof}
For the prime ideals ${\mathfrak p}_1, \ldots, {\mathfrak p}_t$ of $F$ lying over $p$, let ${\mathfrak p}_iO_K={\mathcal P}_i{\mathcal P}_i^\sigma$, $\varphi$ is ramified at ${\mathcal P}_i$ and unramified at ${\mathcal P}_i^\sigma$ for each $i=1, \ldots, t$. Further we put ${\mathcal Q}=\prod_{i=1}^t {\mathcal P}_i$ and $U_K=\left\{u \in O_K^\times \mid u \equiv 1 \bmod {\mathcal Q} \right\}$. For each $i=1, \ldots, t$, $U_{K, {\mathcal P}_i}$ denotes the principal unit group of the completion of $K$ at ${\mathcal P}_i$. $\overbar{U_K}$ is the closure (taken in $\prod_{i=1}^t U_{K, {\mathcal P}_i}$) of the image of $U_K$ under the diagonal map $U_K \rightarrow \prod_{i=1}^t U_{K, {\mathcal P}_i}$. Let ${\rm Cl}_K({\mathfrak n}_0{\mathcal Q}^r)$ be the narrow ray class group of $K$ of modulus ${\mathfrak n}_0{\mathcal Q}^r$ for each integer $r \geq 1$. Our main theorem is related to the finiteness of the projective limit ${\rm Cl}_K({\mathfrak n}_0{\mathcal Q}^\infty)=\varprojlim_r {\rm Cl}_K({\mathfrak n}_0{\mathcal Q}^r)$. In view of class field theory, ${\rm Cl}_K({\mathfrak n}_0{\mathcal Q}^\infty)$ is a finite group if and only if $(\prod_{i=1}^t U_{K, {\mathcal P}_i})/\overbar{U_K}$ is a finite group (see Section \ref{sec:202} for detail). If this is the case, we put
\begin{align*}
 M' &= |{\rm Cl}_K| \cdot \left|\left(\prod_{i=1}^t U_{K, {\mathcal P}_i} \right)/\overbar{U_K} \right| \cdot \prod_{\substack{{\mathfrak l} \mid {\mathfrak n}_0, \\ \text{split in $K$}}} (q_{\mathfrak l}-1) \cdot \prod_{\substack{{\mathfrak l} \mid {\mathfrak n}_0, \\ \text{inert in $K$}}} (q_{\mathfrak l}+1) 
\end{align*}
and $M({\mathcal F}, K, f)=p^{{\rm ord}_p(M')}$ the $p$-part of $M'$. Here ${\rm ord}_p$ is the $p$-adic valuation normalized so that ${\rm ord}_p(p)=1$, $|{\rm Cl}_K|$ is the class number of $K$, and for a prime ${\mathfrak l}$ of $F$, $q_{\mathfrak l}$ is the order of the residue field $O_F/{\mathfrak l}$. 
\begin{theorem}
\label{theo:205}
Let $p$ be an odd prime. Suppose that a primitive $p$-ordinary cuspidal Hida family ${\mathcal F}$ of tame level ${\mathfrak n}_0$ prime to $p$ satisfies the properties {\rm (P1)} and {\rm (P2)} above. Then the following two statements hold true:  
\begin{enumerate}
\item If ${\rm Cl}_K({\mathfrak n}_0{\mathcal Q}^\infty)$ is an infinite group, then $K/F$ is totally imaginary and there is a Hida family $\mathcal G$ of CM type by $K$ that belongs to same Hida community as ${\mathcal F}$; 
\item If ${\rm Cl}_K({\mathfrak n}_0{\mathcal Q}^\infty)$ is of finite order, then ${\mathcal F}$ is not a CM family and the number of classical weight one specializations of ${\mathcal F}$ is bounded by $M({\mathcal F}, K, f)$. 
\end{enumerate}
\end{theorem}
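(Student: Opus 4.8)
The plan is to turn the enumeration of classical weight one specializations of $\mathcal{F}$ into a counting problem for finite order characters of $G_K$, and then read the count off from class field theory for $K$. First I would observe that, since $\mathcal{F}$ is residually of dihedral type, every classical weight one specialization $g$ of $\mathcal{F}$ is again of dihedral type (Lemma \ref{lem:110} and the remark after Definition \ref{defn:111}), so the argument of Lemma \ref{rmk:221} produces $\rho_g \cong \mathrm{Ind}_K^F(\varphi_g)$ for a finite order character $\varphi_g\colon G_K \to \overline{\mathbb{Q}}_p^\times$ lifting $\bar\varphi$. The splitting of each $\mathfrak{p}\mid p$ in $K$ is a property of $K/F$ established once and for all through $f$ (Lemma \ref{lem:202}), so it is available for every $g$. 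I would then normalize $\varphi_g$ to be unramified at each $\mathcal{P}_i^\sigma$: this is exactly the unramified branch $\mathcal{D}_{\mathfrak p}$ of the ordinary filtration of $\rho_{\mathcal F}$ from Section~\ref{sec:103}, which, being unramified as a family character, specializes to the unramified constituent $\varphi_g|_{\mathcal{P}_i^\sigma}$ for every $g$ simultaneously (matching $\varphi=\varphi_f$). This normalization determines $\varphi_g$ from $\rho_g$, and since $\rho_g$ determines $g$, the assignment $g \mapsto \varphi_g$ is injective; it therefore suffices to bound the number of admissible $\varphi_g$.

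For part (2) I would analyze how two admissible characters differ. For specializations $g,g'$ put $\eta = \varphi_g\varphi_{g'}^{-1}$. Since both lift $\bar\varphi$, $\eta$ reduces to the trivial character and hence has $p$-power order; by the normalization it is unramified at every $\mathcal{P}_i^\sigma$, and its ramification is confined to $\mathcal{Q}$ and to the primes dividing $\mathfrak{n}_0$. Moreover the determinant identity $\det\rho_g = \psi_{\mathcal F}\varepsilon = \varepsilon_{K/F}\cdot N_{K/F}(\varphi_g)$ forced by $p$-ordinarity (Section~\ref{sec:103}) shows that $N_{K/F}(\eta)$ is a ratio of $p$-power order characters of $\mathbf{G}$, in particular unramified outside $p$; at a prime $\mathfrak{l}\mid\mathfrak{n}_0$ this norm condition cuts the relevant local character group down to the kernel of the norm map, contributing $q_{\mathfrak l}-1$ in the split case and $q_{\mathfrak l}+1$ in the inert case. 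Class field theory (the computation of Section~\ref{sec:202}) then identifies the admissible characters with the $p$-primary part of a finite abelian group of order $M'$, in which $|\mathrm{Cl}_K|$ and $|(\prod_i U_{K,\mathcal{P}_i})/\overline{U_K}|$ account for the unboundedly wild ramification at $\mathcal{Q}$ through the projective limit defining $\mathrm{Cl}_K(\mathfrak{n}_0\mathcal{Q}^\infty)$. As the $\varphi_g$ attached to distinct specializations all lie in a single coset of this $p$-primary part, their number is at most $p^{\mathrm{ord}_p(M')} = M(\mathcal{F},K,f)$. For the non-CM assertion I would argue by contradiction: a CM structure on $\mathcal{F}$ would be by a totally imaginary quadratic field inducing $\bar\rho_{\mathcal F}$, necessarily $K$ by uniqueness of the dihedral field, which would force $K/F$ totally imaginary and hence $\mathrm{Cl}_K(\mathfrak{n}_0\mathcal{Q}^\infty)$ infinite (Remark \ref{rmk:291}), contrary to hypothesis.

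For part (1) I would run the correspondence in reverse: when $\mathrm{Cl}_K(\mathfrak{n}_0\mathcal{Q}^\infty)$ is infinite there are infinitely many admissible characters $\varphi_g$, and these can be interpolated into a $\Lambda$-adic Hecke character of $K$, which induces a Hida family $\mathcal{G}$ of CM type (the construction recalled after Definition \ref{defn:105}). By construction $\mathcal{G}$ has tame level $\mathfrak{n}_0$ and residual representation $\mathrm{Ind}_K^F(\bar\varphi)=\bar\rho_{\mathcal F}$, so it lies in the same Hida community as $\mathcal{F}$ and admits infinitely many classical weight one specializations. Theorem \ref{theo:107} then forces $\mathcal{G}$ to be of CM type, and since a CM family exists only over a totally imaginary quadratic field, $K/F$ is totally imaginary.

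The hard part will be the class field theory bookkeeping in the finite case: matching the group of admissible $\eta$ with the $p$-primary part of the group of order $M'$ and extracting the exact local factors. The passage to the projective limit over $\mathcal{Q}^r$ must be made compatible with the norm constraint at $\mathfrak{n}_0$, and the contribution of the $\mathcal{Q}^\infty$-part is precisely the term $|(\prod_i U_{K,\mathcal{P}_i})/\overline{U_K}|$, whose finiteness is the phenomenon that here replaces Leopoldt's conjecture. Verifying that this local-units-modulo-global-units term is the correct interpolation factor, and that no admissible character is overcounted or omitted under the inessential $\varphi_g \leftrightarrow \varphi_g^\sigma$ ambiguity, is where the bulk of the careful work lies.
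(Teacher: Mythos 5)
Your part (2) is essentially the paper's own argument: you count the ratio characters $\xi=\varphi_g/\varphi_{g'}$, show they are $p$-power order characters of ${\rm Cl}_K({\mathfrak n}_0{\mathcal Q}^\infty)$ whose transfer to $G_F$ is unramified outside $p$, and use class field theory to cut the local character groups at ${\mathfrak l}\mid{\mathfrak n}_0$ down to groups of order $q_{\mathfrak l}-1$ (split) and $q_{\mathfrak l}+1$ (inert); this is exactly Lemma \ref{lem:293}, Lemma \ref{lem:tame-level} and Section \ref{sec:202}, giving the bound $p^{{\rm ord}_p(M')}$. Two minor caveats there: the cleanest proof that ${\mathcal F}$ is non-CM in case (2) is simply that a CM family has infinitely many classical weight one specializations, contradicting the finite bound just established — your appeal to ``uniqueness of the dihedral field'' is not valid when the projective image is a small dihedral group, where several quadratic self-twist fields can coexist; and your normalization of $\varphi_g$ via the family's unramified branch ${\mathcal D}_{\mathfrak p}$ pins down the correct branch residually only when $\bar{\mathcal E}_{\mathfrak p}\neq\bar{\mathcal D}_{\mathfrak p}$, though the paper is equally brief on this point.

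The genuine gap is in part (1). You propose to interpolate the infinitely many admissible characters into a $\Lambda$-adic Hecke character of $K$ and thereby ``induce a Hida family ${\mathcal G}$ of CM type.'' But the construction of a CM family from a $\Lambda$-adic Hecke character (the one recalled after Definition \ref{defn:105}, from Section 4 of \cite{BGV}) requires $K/F$ to be totally imaginary: only then are the weight $k\geq 2$ specializations of the induced $\Lambda$-adic object holomorphic (discrete series at infinity) Hilbert cusp forms. At this stage of the proof, total imaginarity of $K/F$ is not known — it is precisely one of the conclusions of part (1), and absent Leopoldt-type input one cannot rule out a totally real $K$ with ${\rm Cl}_K({\mathfrak n}_0{\mathcal Q}^\infty)$ infinite by pure algebra. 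So your construction presupposes its conclusion, and the closing appeal to Theorem \ref{theo:107} does not repair this, since that theorem applies only to an honest Hida family already known to have infinitely many classical weight one specializations — producing such a family is exactly what is at stake. The paper avoids the circle by staying in weight one, where induction from an arbitrary quadratic extension (real or imaginary) can produce classical forms: for each admissible $\xi$ it shows $\rho_\xi={\rm Ind}_K^F(\varphi\xi)$ is irreducible and totally odd (Lemma \ref{lem:292}), obtains a classical holomorphic weight one form $f_\xi$ by automorphic induction together with an archimedean computation (Lemma \ref{lem:wt-one}), verifies $p$-ordinarity of its stabilizations and that the tame level is ${\mathfrak n}_0$ (Proposition \ref{prop:211}), and invokes Wiles' existence theorem (Theorem 3 of \cite{W88}) to place each $f_\xi$ in some family of the community $\left\{{\mathcal F}\right\}$; since a Hida community is finite, pigeonholing yields a single member ${\mathcal G}$ with infinitely many classical weight one specializations, and only then does Theorem \ref{theo:107} give that ${\mathcal G}$ is CM and $K/F$ is totally imaginary. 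These intermediate steps — weight one automorphic induction, the ordinarity and level control, Wiles' existence theorem, and the pigeonhole over the finite community — are the content your proposal is missing.
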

\begin{remark}
\label{rmk:291}
\begin{enumerate}
\item When $K/F$ is totally imaginary, the rank of the torsion-free part of $U_K$ as a ${\mathbb{Z}}$-module is $d-1$, and hence the ${\mathbb{Z}}_p$-rank of $\overbar{U_K}$ is at most $d-1$. On the other hand, the ${\mathbb{Z}}_p$-rank of $\prod_{i=1}^t U_{K, {\mathcal P}_i}$ is $d$. Therefore the group $\left(\prod_{i=1}^t U_{K, {\mathcal P}_i} \right)/\overbar{U_K}$ cannot be finite. 
\item If $K/F$ is totally imaginary and ${\mathcal F}$ is not a CM family, one can also give an upper bound. This estimate will be discussed at the end of this section (Proposition \ref{prop:294}), exactly in the same manner as Lemma 6.5; p.~682 of \cite{DG}. \label{rmk:293}
\end{enumerate}
\end{remark}
The outline of the proof is as follows. Suppose that $f'$ is another classical weight one specialization of ${\mathcal F}$ (if any) and let $\rho_{f'}={\rm Ind}_K^F(\varphi')$. Our strategy is counting the number of characters $\xi=\varphi/\varphi'$. Let $G_F^{\rm ab}$ (resp. $G_K^{\rm ab}$) be the maximal abelian quotient of $G_F$ (resp. $G_K$) and ${\rm ver}_{K/F}: G_F^{\rm ab} \rightarrow G_K^{\rm ab}$ the transfer map (namely, ${\rm ver}_{K/F}(g)=g\sigma g \sigma^{-1}$ if $g \in G_K$ and ${\rm ver}_{K/F}(g)=g^2$ if $g \notin G_K$). 
\begin{lemma}
\label{lem:293}
The above $\xi: G_K \rightarrow \overbar{\mathbb{Q}}_p^\times$ satisfies the following properties: 
\begin{enumerate}
\item $\xi$ is a $p$-power order character; 
\item $\xi$ is unramified outside ${\mathfrak n}_0{\mathcal Q}$ and the infinite places of $K$, and the prime-to-$p$ part of $N_{K/F}({\rm cond}(\xi))$ divides ${\mathfrak n}_0$; 
\item $\xi \circ {\rm ver}_{K/F}: G_F \rightarrow \overbar{\mathbb{Q}}_p^\times$ is unramified outside $p$. 
\end{enumerate}
\end{lemma}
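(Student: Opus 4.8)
The plan is to fix, once and for all, a finite order character $\varphi'\colon G_K\to O^\times$ with ${\rm Ind}_K^F(\varphi')\cong\rho_{f'}$, normalized so that its reduction is $\bar\varphi$ and not $\bar\varphi^\sigma$. This normalization is available because ${\rm Ind}_K^F(\varphi')\cong{\rm Ind}_K^F(\varphi'^\sigma)$ gives the freedom to interchange $\varphi'$ and $\varphi'^\sigma$, while $\bar\varphi\neq\bar\varphi^\sigma$ (Remark \ref{rmk:201}) makes the two reductions genuinely distinct, so exactly one choice reduces to $\bar\varphi$. With this in place $\xi=\varphi/\varphi'$ reduces to the trivial character modulo the maximal ideal, so $\xi$ takes values in the principal units $1+\mathfrak m_O$. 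Since $\xi$ has finite order and the torsion subgroup of $1+\mathfrak m_O$ is a $p$-group, $\xi$ is of $p$-power order, which is assertion (1).

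For assertion (3) I would use the determinant of an induced representation: for a character $\varphi$ of the index-two subgroup $G_K$ one has ${\rm det}({\rm Ind}_K^F\varphi)=\varepsilon_{K/F}\cdot(\varphi\circ{\rm ver}_{K/F})$. On the other hand, specializing ${\rm det}(\rho_{\mathcal F})=\psi_{\mathcal F}\chi_\Lambda$ at the weight one point giving $f$ yields ${\rm det}(\rho_f)=\psi_{\mathcal F}\varepsilon$, where $\varepsilon\colon\mathbf G\to\overbar{\mathbb Q}_p^\times$ is the $p$-power order character with $\varphi_{1,\varepsilon}({\mathcal F})=f$ (Section \ref{sec:103}). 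Comparing the two expressions for ${\rm det}(\rho_f)$ gives $\varphi\circ{\rm ver}_{K/F}=\varepsilon_{K/F}\psi_{\mathcal F}\varepsilon$, and likewise $\varphi'\circ{\rm ver}_{K/F}=\varepsilon_{K/F}\psi_{\mathcal F}\varepsilon'$ for the character $\varepsilon'$ attached to $f'$. Taking the ratio, the factors $\varepsilon_{K/F}$ and $\psi_{\mathcal F}$ cancel and I obtain $\xi\circ{\rm ver}_{K/F}=\varepsilon/\varepsilon'$. As $\varepsilon$ and $\varepsilon'$ factor through $\mathbf G={\rm Gal}(F_\infty/F)$ and the cyclotomic $\mathbb Z_p$-extension $F_\infty/F$ is unramified outside $p$, the character $\varepsilon/\varepsilon'$, and hence $\xi\circ{\rm ver}_{K/F}$, is unramified outside $p$. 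This settles (3).

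Assertion (2) splits into the behaviour away from $p$ and at $p$. Away from $p$, both $\rho_f$ and $\rho_{f'}$ are specializations of ${\mathcal F}$, hence unramified outside $\mathfrak n_0p$; by the conductor--discriminant formula (\ref{eqn:c-d}) together with the fact that $d_{K/F}$ is prime to $p$ (each $\mathfrak p\mid p$ splits in $K$ by Lemma \ref{lem:202}), the prime-to-$p$ parts of $N_{K/F}({\rm cond}(\varphi))$ and of $N_{K/F}({\rm cond}(\varphi'))$ divide $\mathfrak n_0$; taking the ratio, the prime-to-$p$ part of $N_{K/F}({\rm cond}(\xi))$ divides $\mathfrak n_0$. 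At $p$, Lemma \ref{lem:202} shows that $\varphi$ (resp. $\varphi'$) is ramified at exactly one of ${\mathcal P}_i,{\mathcal P}_i^\sigma$ for each $i$, and the point is to check that $\varphi'$ ramifies at the same prime ${\mathcal P}_i$ as $\varphi$. For this I would use that $f$ and $f'$ share the ordinary filtration of ${\mathcal F}$: since $\mathfrak p_i$ splits, $D_{\mathfrak p_i}=D_{{\mathcal P}_i}\subset G_K$, and the unramified character in the ordinary filtration of $\rho_f$ at $\mathfrak p_i$ restricts on $D_{{\mathcal P}_i}$ to $\varphi^\sigma$, while that of $\rho_{f'}$ restricts to the unramified one of $\varphi',\varphi'^\sigma$. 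Both reduce to the common residual character $\overbar{\mathcal D}_{\mathfrak p_i}=\bar\varphi^\sigma|_{D_{{\mathcal P}_i}}$; since $\bar\varphi\neq\bar\varphi^\sigma$ on $D_{{\mathcal P}_i}$, the unramified direction for $f'$ must again be $\varphi'^\sigma$, forcing $\varphi'$ to ramify at ${\mathcal P}_i$ and be unramified at ${\mathcal P}_i^\sigma$. Hence $\xi$ is unramified at every ${\mathcal P}_i^\sigma$, and therefore unramified outside $\mathfrak n_0{\mathcal Q}$ and the infinite places.

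The main obstacle is precisely this last step---pinning down the $p$-adic ramification direction of $\varphi'$. It rests on the distinctness $\bar\varphi|_{D_{{\mathcal P}_i}}\neq\bar\varphi^\sigma|_{D_{{\mathcal P}_i}}$, i.e. $p$-distinguishedness of ${\mathcal F}$ at $\mathfrak p_i$. When $\psi_{\mathcal F}$ is ramified at $\mathfrak p_i$ this is automatic, since then $\overbar{\mathcal E}_{\mathfrak p_i}$ is ramified while $\overbar{\mathcal D}_{\mathfrak p_i}$ is not; the remaining case, where $\psi_{\mathcal F}$ is unramified at $\mathfrak p_i$ and both residual characters are unramified there, has to be dealt with by comparing the Frobenius eigenvalues of $\overbar{\mathcal E}_{\mathfrak p_i}$ and $\overbar{\mathcal D}_{\mathfrak p_i}$ furnished by the family. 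Everything else---assertion (1), the determinant computation in (3), and the prime-to-$p$ conductor bound in (2)---is formal once the normalization $\bar\varphi'=\bar\varphi$ is fixed.
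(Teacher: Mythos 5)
Your handling of assertions (1) and (3), and of the prime-to-$p$ conductor bound in (2), is exactly the paper's: (1) is the observation that two lifts of $\bar{\varphi}$ differ by a $p$-power order character (your normalization $\bar{\varphi}'=\bar{\varphi}$, available by Remark \ref{rmk:201}, is the same implicit choice the paper makes); (3) is the determinant computation $\xi\circ{\rm ver}_{K/F}={\rm det}(\rho_f)/{\rm det}(\rho_{f'})=\varepsilon/\varepsilon'$, as in the paper (the paper also records total evenness, which follows from your remark that $\varepsilon/\varepsilon'$ factors through ${\mathbf G}$); and the statement about the prime-to-$p$ part of $N_{K/F}({\rm cond}(\xi))$ is the conductor--discriminant formula applied to $\varphi$ and $\varphi'$, again as in the paper. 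The divergence is at the step you yourself call the main obstacle: showing that $\varphi'$, once normalized so that $\bar{\varphi}'=\bar{\varphi}$, is unramified at every ${\mathcal P}_i^\sigma$.

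At that step your argument has a genuine gap, which you partly acknowledge but do not close. Ordinarity of $f'$ only rules out $\varphi'$ being ramified at both ${\mathcal P}_i$ and ${\mathcal P}_i^\sigma$; to exclude the bad direction ($\varphi'$ unramified at ${\mathcal P}_i$ but ramified at ${\mathcal P}_i^\sigma$) you compare the reduction of the unramified quotient of $\rho_{f'}|_{D_{{\mathfrak p}_i}}$ with $\overbar{\mathcal D}_{{\mathfrak p}_i}=\bar{\varphi}^\sigma|_{D_{{\mathcal P}_i}}$, and this comparison has content only when $\overbar{\mathcal E}_{{\mathfrak p}_i}\neq\overbar{\mathcal D}_{{\mathfrak p}_i}$, i.e.\ when ${\mathcal F}$ is $p$-distinguished at ${\mathfrak p}_i$. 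That is not implied by (P1)--(P2): if $\psi_{\mathcal F}$ is unramified at ${\mathfrak p}_i$ and $\varphi|_{I_{{\mathcal P}_i}}$ has nontrivial $p$-power order (then $\rho_f(I_{{\mathfrak p}_i})$ has order $\geq p\geq 3$, so (P2) holds), both residual characters are unramified at ${\mathfrak p}_i$, and if in addition their Frobenius values coincide, your proposed fallback of ``comparing the Frobenius eigenvalues of $\overbar{\mathcal E}_{{\mathfrak p}_i}$ and $\overbar{\mathcal D}_{{\mathfrak p}_i}$'' is vacuous precisely in the case it is meant to handle; the bad direction for $\varphi'$ is then not excluded, and assertion (2) is not proved. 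You should know that the paper's own proof passes over this same point in one line (``by an argument similar to the proof of Lemma \ref{lem:202}, we may assume that $\varphi'$ is ramified at the ${\mathcal P}_i$'s and unramified at the ${\mathcal P}_i^\sigma$'s''), treating as a free normalization a choice that is in fact already pinned down by the requirement $\bar{\varphi}'=\bar{\varphi}$ in (1); your analysis has the merit of making the needed input ($p$-distinguishedness) explicit, but as written neither your argument nor your fallback covers the non-$p$-distinguished case, so the proposal does not constitute a complete proof of the lemma.
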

\begin{proof}
Since both $\varphi$ and $\varphi'$ are lifts of $\bar{\varphi}$, they differ by a $p$-power order character. As for the second statement, by an argument similar to the proof of Lemma \ref{lem:202}, we may assume that $\varphi'$ is ramified at ${\mathcal P}_i$'s and unramified at ${\mathcal P}_i^\sigma$'s. Then $\xi$ is unramified at ${\mathcal P}_i^\sigma$ for all $i$. We note that the conductor of $\rho_f$ is equal to the level of $f$ and hence it is of the form ${\mathfrak n}_0p^r$ for some integer $r$. Then conductor-discriminant formula implies that the prime-to-$p$ part of $N_{K/F}({\rm cond}(\varphi))$ divides ${\mathfrak n}_0$ (the same for $\varphi'$). Therefore the prime-to-$p$ part of $N_{K/F}({\rm cond}(\xi))$ divides ${\mathfrak n}_0$, and $\xi$ is unramified outside ${\mathfrak n}_0{\mathcal Q}$ and the infinite places. We prove the third assertion. We have ${\rm det}(\rho_f)=\varepsilon_{K/F} \cdot (\varphi \circ {\rm ver}_{K/F})$ (the same for $f'$ and $\varphi'$) and thus $\xi \circ {\rm ver}_{K/F}={\rm det}(\rho_f)/{\rm det}(\rho_{f'})=\varepsilon/\varepsilon'$ (here $\varepsilon: {\mathbf{G}} \rightarrow \overbar{\mathbb{Q}}_p^\times$ is the character such that $f=\varphi_{1, \varepsilon}({\mathcal F})$. The same for $f'$ and $\varepsilon'$). Therefore $\xi \circ {\rm ver}_{K/F}$ is unramified outside $p$ and the infinite places. Moreover ${\rm det}(\rho_f)$ and ${\rm det}(\rho_{f'})$ are both totally odd, since $f$ and $f'$ are weight one forms. Hence $\xi \circ {\rm ver}_{K/F}$ is totally even, as it is desired. 
\end{proof}
Hence $\xi$ can be regarded as a $p$-power order character of ${\rm Cl}_K({\mathfrak n}_0{\mathcal Q}^\infty)$. The restriction of such a character to the product $\prod_{i=1}^t U_{K, {\mathcal P}_i} \times (O_K/{\mathfrak n}_0)^\times$ factors through the quotient (\ref{eqn:207}) below (Section \ref{sec:202}). Hence we obtain the upper bound in Theorem \ref{theo:205} (2) if ${\rm Cl}_K({\mathfrak n}_0{\mathcal Q}^\infty)$ is finite. Secondly we assume that ${\rm Cl}_K({\mathfrak n}_0{\mathcal Q}^\infty)$ is infinite. We first show that there are infinitely many classical weight one specializations that arise from the Hida community containing ${\mathcal F}$ (Section \ref{sec:203}). Since a Hida community is a finite set, there exists a member ${\mathcal G}$ in the community which possesses infinitely many such specializations. In view of Theorem \ref{theo:107}, $K/F$ has to be totally imaginary and ${\mathcal G}$ has CM by $K$ (Section \ref{sec:204}). 

\subsection{Global class field theory}
\label{sec:202}
We begin with some observation on the narrow ray class group ${\rm Cl}_K({\mathfrak n}_0{\mathcal Q}^\infty)$. For each $i=1, \ldots, t$, let $K_{{\mathcal P}_i}$ be the completion of $K$ at ${\mathcal P}_i$, $O_{K, {\mathcal P}_i}$ the integer ring of $K_{{\mathcal P}_i}$, $O_K^\times$ be the unit group of $K$ and $\overbar{O_K^\times}$ be the closure (taken in $\prod_{i=1}^t O_{K, {\mathcal P}_i}^\times$) of the image of $O_K^\times$ under the diagonal embedding $O_K^\times \rightarrow \prod_{i=1}^t O_{K, {\mathcal P}_i}^\times$. By global class field theory, we know that ${\rm Cl}_K({\mathfrak n}_0{\mathcal Q}^\infty)$ is sitting in the exact sequence: 
\begin{align}
\begin{CD}
 1 \; @>>> \; \overbar{O_K^\times} \; @>>> \; \displaystyle \prod_{i=1}^t O_{K, {\mathcal P}_i}^\times \times (O_K/{\mathfrak n}_0)^\times \; @>>> \; {\rm Cl}_K({\mathfrak n}_0{\mathcal Q}^\infty) \; @>>> \; {\rm Cl}_K^+ \; @>>> \; 1. 
\end{CD} \label{exact}
\end{align}
Here ${\rm Cl}_K^+$ is the narrow ideal class group of $K$. Notice that the $p$-part of $|{\rm Cl}_K^+|$ and $|{\rm Cl}_K|$ are equal, since $p$ is odd. In particular we see that
\begin{center}
 ``${\rm Cl}_K({\mathfrak n}_0{\mathcal Q}^\infty)$ is a finite group if and only if the ${\mathbb{Z}}_p$-rank of $\overbar{O_K^\times}$ is equal to $d$." 
\end{center}
Let $\xi: {\rm Cl}_K({\mathfrak n}_0{\mathcal Q}^\infty) \rightarrow \overbar{\mathbb{Q}}_p^\times$ be a $p$-power order character. Then $\xi$ restricted to the product $\prod_{i=1}^t O_{K, {\mathcal P}_i}^\times \times (O_K/{\mathfrak n}_0)^\times$ factors through the quotient
\begin{align}
 \left(\left(\prod_{i=1}^t U_{K, {\mathcal P}_i} \right)/\overbar{U_K} \right) \times (O_K/{\mathfrak n}_0)^\times. \label{eqn:206}
\end{align}
Suppose further that $\xi \circ {\rm ver}_{K/F}: G_F^{\rm ab} \rightarrow \overbar{\mathbb{Q}}_p^\times$ is unramified outside $p$. Note that $\xi \circ {\rm ver}_{K/F}$ is unramified outside ${\mathfrak n}_0p$ because $\xi$ is unramified outside ${\mathfrak n}_0{\mathcal Q}$. However it is not always the case that $\xi$ is unramified at primes dividing ${\mathfrak n}_0$. Therefore this additional assertion should cause some constraint on the behavior of $\xi$ at the primes of $K$ lying over ${\mathfrak n}_0$. We shall establish the following
\begin{lemma}
\label{lem:tame-level}
Let $\xi$ be a character satisfying the properties $(1)$ through $(3)$ in Lemma $\ref{lem:293}$. Then $\xi$ restricted to $\prod_{i=1}^t O_{K, {\mathcal P}_i}^\times \times (O_K/{\mathfrak n}_0)^\times$ factors through the quotient 
\begin{align}
 \left(\left(\prod_{i=1}^t U_{K, {\mathcal P}_i} \right)/\overbar{U_K} \right) \times \prod_{\substack{{\mathfrak l} \mid {\mathfrak n}_0, \\ \text{split in $K$}}} {\mathbb{F}}_{\mathfrak l}^\times \times \prod_{\substack{{\mathfrak l} \mid {\mathfrak n}_0, \\ \text{inert in $K$}}} {\mathbb{F}}_{{\mathfrak l}^2}^\times/{\mathbb{F}}_{\mathfrak l}^\times \label{eqn:207}
\end{align}
of $(\ref{eqn:206})$. Here for a prime ideal ${\mathfrak l}$ of $F$ dividing ${\mathfrak n}_0$, ${\mathbb{F}}_{\mathfrak l}$ denotes the residue field $O_F/{\mathfrak l}$. As for a prime factor ${\mathfrak l}$ of ${\mathfrak n}_0$ that is inert in $K$, ${\mathbb{F}}_{{\mathfrak l}^2}$ is the residue field $O_K/{\mathfrak l}O_K$, which is the unique quadratic extension of ${\mathbb{F}}_{\mathfrak l}$. 
\end{lemma}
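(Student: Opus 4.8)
The plan is to combine the idele-theoretic description of the quotient $(\ref{eqn:206})$ recalled in Section $\ref{sec:202}$ with a local analysis of the condition that $\xi \circ {\rm ver}_{K/F}$ be unramified outside $p$. Throughout I would identify $\xi$ with a Hecke character of $K$ via global class field theory, writing $\xi_{\mathfrak L}$ for its local component at a prime ${\mathfrak L}$ of $K$. Since $(\ref{eqn:207})$ is a quotient of $(\ref{eqn:206})$ in which the factor $(\prod_i U_{K, {\mathcal P}_i})/\overbar{U_K}$ is left untouched, it suffices to control the image of $\xi$ on the tame factor $(O_K/{\mathfrak n}_0)^\times$.

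First I would dispose of the higher unit groups. Writing $(O_K/{\mathfrak n}_0)^\times = \prod_{{\mathfrak L} \mid {\mathfrak n}_0 O_K} (O_{K, {\mathfrak L}}/{\mathfrak L}^{e_{\mathfrak L}})^\times$, each local factor is an extension of the residue-field units ${\mathbb{F}}_{\mathfrak L}^\times$ by a group of principal units whose order is a power of the rational prime below ${\mathfrak L}$. As ${\mathfrak n}_0$ is prime to $p$, these principal unit groups have order prime to $p$, so the $p$-power order character $\xi$ (Lemma $\ref{lem:293}$ (1)) is trivial on them. Hence $\xi$ restricted to $(O_K/{\mathfrak n}_0)^\times$ already factors through $\prod_{{\mathfrak L} \mid {\mathfrak n}_0 O_K} {\mathbb{F}}_{\mathfrak L}^\times$, and the statement is reduced to identifying, for each ${\mathfrak l} \mid {\mathfrak n}_0$, the image of $\xi$ on the residue-field units above ${\mathfrak l}$.

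The key input is the interpretation of Lemma $\ref{lem:293}$ (3). Under the reciprocity isomorphisms the transfer map ${\rm ver}_{K/F}: G_F^{\rm ab} \to G_K^{\rm ab}$ corresponds to the natural inclusion of idele class groups $C_F \hookrightarrow C_K$; consequently $\xi \circ {\rm ver}_{K/F}$ is the Hecke character $\xi|_{C_F}$, whose local component at a prime ${\mathfrak l}$ of $F$ is $\prod_{{\mathfrak L} \mid {\mathfrak l}} \xi_{\mathfrak L}|_{F_{\mathfrak l}^\times}$, the product being taken over the primes ${\mathfrak L}$ of $K$ above ${\mathfrak l}$ via the embeddings $F_{\mathfrak l} \hookrightarrow K_{\mathfrak L}$. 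The hypothesis that $\xi \circ {\rm ver}_{K/F}$ is unramified at each ${\mathfrak l} \mid {\mathfrak n}_0$ therefore amounts to the local relation $\prod_{{\mathfrak L} \mid {\mathfrak l}} \xi_{\mathfrak L}|_{O_{F, {\mathfrak l}}^\times} = 1$.

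It then remains to read off this relation on residue-field units case by case. When ${\mathfrak l}O_K = {\mathfrak L}{\mathfrak L}^\sigma$ splits, both completions are isomorphic to $F_{\mathfrak l}$ and the relation becomes $\xi_{\mathfrak L} \cdot \xi_{{\mathfrak L}^\sigma} = 1$ on ${\mathbb{F}}_{\mathfrak l}^\times$, so the pair of residue-field characters is determined by $\xi_{\mathfrak L}$ alone and $\xi$ factors through a single copy of ${\mathbb{F}}_{\mathfrak l}^\times$; when ${\mathfrak l}$ is inert, $K_{{\mathfrak l}O_K}/F_{\mathfrak l}$ is the unramified quadratic extension and the relation forces $\xi_{{\mathfrak l}O_K}$ to be trivial on the image ${\mathbb{F}}_{\mathfrak l}^\times \subset {\mathbb{F}}_{{\mathfrak l}^2}^\times$, so it factors through ${\mathbb{F}}_{{\mathfrak l}^2}^\times/{\mathbb{F}}_{\mathfrak l}^\times$. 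A prime dividing ${\mathfrak n}_0$ that ramifies in $K$, should one occur, contributes trivially by the same computation, consistent with its absence from $(\ref{eqn:207})$. Assembling these local contributions yields the desired factorization through $(\ref{eqn:207})$. The main obstacle I anticipate is pinning down the local component of $\xi \circ {\rm ver}_{K/F}$ correctly, in particular the compatibility of the Verlagerung with the inclusion $C_F \hookrightarrow C_K$ and the diagonal behavior of $F_{\mathfrak l}^\times$ inside $\prod_{{\mathfrak L} \mid {\mathfrak l}} K_{\mathfrak L}^\times$, since everything downstream is merely a bookkeeping of that identity; a reassuring cross-check is that the resulting orders $q_{\mathfrak l}-1$ (split) and $q_{\mathfrak l}+1 = (q_{\mathfrak l}^2-1)/(q_{\mathfrak l}-1)$ (inert) are exactly the tame factors appearing in $M'$.
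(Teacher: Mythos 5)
Your proposal is correct and follows essentially the same route as the paper's proof: both interpret the hypothesis on $\xi \circ {\rm ver}_{K/F}$ via class field theory as triviality of $\xi$ on the diagonally embedded $O_{F,{\mathfrak l}}^\times$, reduce to residue-field units using that $\xi$ has $p$-power order while ${\mathfrak n}_0$ is prime to $p$, and then carry out the same split/inert/ramified case analysis (the paper computes the quotient group $\bigl(\prod_{{\mathfrak L}\mid{\mathfrak l}}{\mathbb F}_{\mathfrak L}^\times\bigr)/{\mathbb F}_{\mathfrak l}^\times$ directly, which is just the dual formulation of your condition on characters).
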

\begin{proof}
Let ${\mathfrak l}$ be a prime ideal dividing ${\mathfrak n}_0$ and $O_{F, {\mathfrak l}}^\times$ the unit group of the completion of $F$ at ${\mathfrak l}$. Similarly for a prime ideal ${\mathfrak L}$ of $K$, $O_{K, {\mathfrak L}}^\times$ denotes the unit group of the completion of $K$ at ${\mathfrak L}$. We note that in view of class field theory, the transfer map ${\rm ver}_{K/F}: G_F^{\rm ab} \rightarrow G_K^{\rm ab}$ corresponds to a canonical map from the id\'ele class group of $F$ to that of $K$. With this in mind, what we have to do is to characterize the set of $p$-power order characters $\xi: \prod_{{\mathfrak L} \mid {\mathfrak l}} O_{K, {\mathfrak L}}^\times \rightarrow \overbar{\mathbb{Q}}_p^\times$ whose restriction to $O_{F, {\mathfrak l}}^\times$ is trivial. Since ${\mathfrak n}_0$ is prime to $p$, it is enough to investigate the quotient group
\begin{align}
 \left(\prod_{{\mathfrak L} \mid {\mathfrak l}} (O_{K, {\mathfrak L}}/{\mathfrak L})^\times \right)/(O_{F, {\mathfrak l}}/{\mathfrak l})^\times \label{eqn:quot}
\end{align}

Suppose first that ${\mathfrak l}$ splits in $K$, say ${\mathfrak l}O_K={\mathfrak L}{\mathfrak L}^\sigma$ for distinct prime ideals ${\mathfrak L}$ and ${\mathfrak L}^\sigma$ of $K$. Then $\sigma$ induces an isomorphism $\sigma: O_{K, {\mathfrak L}}^\times \cong O_{K, {\mathfrak L}^\sigma}^\times$ and the group (\ref{eqn:quot}) is isomorphic to $(O_{K, {\mathfrak L}^\sigma}/{\mathfrak L}^\sigma)^\times$ by the map taking $(x, y) \bmod (O_{F, {\mathfrak l}}/{\mathfrak l})^\times$ to $y \cdot (x^\sigma)^{-1}$. Secondly if ${\mathfrak l}$ is inert in $K$, the group (\ref{eqn:quot}) is nothing but ${\mathbb{F}}_{{\mathfrak l}^2}^\times/{\mathbb{F}}_{\mathfrak l}^\times$, as it is desired. Finally the group (\ref{eqn:quot}) is trivial if ${\mathfrak l}$ is ramified in $K$. 
\end{proof}
Now we are ready to prove Theorem \ref{theo:205} (2). If ${\rm Cl}_K({\mathfrak n}_0{\mathcal Q}^\infty)$ is a finite group, so is the group (\ref{eqn:207}) and the number of classical weight one specializations of ${\mathcal F}$ is bounded by the order of the $p$-Sylow group of (\ref{eqn:207}). This establishes Theorem \ref{theo:205} (2). 

Secondly we shall prove Theorem \ref{theo:205} (1). Put ${\rm Cl}_K({\mathcal Q}^\infty)=\varprojlim_r {\rm Cl}_K({\mathcal Q}^r)$. In view of the exact sequence (\ref{exact}), ${\rm Cl}_K({\mathcal Q}^\infty)$ is infinite if and only if ${\rm Cl}_K({\mathfrak n}_0{\mathcal Q}^\infty)$ is infinite. A character $\xi: {\rm Cl}_K({\mathcal Q}^\infty) \rightarrow \overbar{\mathbb{Q}}_p^\times$, regarded as a character of $G_K$, is unramified outside ${\mathcal Q}$ and the infinite places. We record this observation as 
\begin{lemma}
\label{lem:208}
Suppose that ${\rm Cl}_K({\mathfrak n}_0{\mathcal Q}^\infty)$ is an infinite group. Then there are infinitely many characters $\xi: G_K \rightarrow \overbar{\mathbb{Q}}_p^\times$ which enjoy the properties $(1)$ through $(3)$ in Lemma $\ref{lem:293}$ and is unramified at the prime ideals of $K$ lying over ${\mathfrak n}_0$. 
\end{lemma}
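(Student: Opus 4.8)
The plan is to realize the required characters directly as $p$-power order characters of the profinite group ${\rm Cl}_K({\mathcal Q}^\infty)$, extracting them from the infiniteness hypothesis and then reading off properties (1)--(3) of Lemma \ref{lem:293} from class field theory. The first observation is that ${\rm Cl}_K({\mathcal Q}^\infty)$ is itself infinite: comparing the exact sequence (\ref{exact}) with its analogue for the modulus ${\mathcal Q}^\infty$, the two groups differ only by the finite factor $(O_K/{\mathfrak n}_0)^\times$, so ${\rm Cl}_K({\mathcal Q}^\infty)$ is infinite precisely when ${\rm Cl}_K({\mathfrak n}_0{\mathcal Q}^\infty)$ is, which holds by assumption. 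Any continuous character of ${\rm Cl}_K({\mathcal Q}^\infty)$, viewed through class field theory as a character of $G_K$, is automatically unramified outside ${\mathcal Q}$ and the infinite places.

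The one step requiring care is to ensure that the infiniteness is carried by the pro-$p$ part, since an infinite profinite group need not admit infinitely many characters of $p$-power order. I would deduce this from the rank criterion of Section \ref{sec:202}: there ${\rm Cl}_K({\mathfrak n}_0{\mathcal Q}^\infty)$ is shown to be finite exactly when $\overbar{O_K^\times}$ has $\mathbb{Z}_p$-rank $d$, while $\prod_{i=1}^t U_{K,{\mathcal P}_i}$ has $\mathbb{Z}_p$-rank $d$. Hence the infiniteness hypothesis forces $\overbar{O_K^\times}$ to have rank strictly less than $d$, so the pro-$p$ group $(\prod_{i=1}^t U_{K,{\mathcal P}_i})/\overbar{U_K}$ has positive $\mathbb{Z}_p$-rank. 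Since ${\rm Cl}_K^+$ and the prime-to-$p$ part of $\prod_{i=1}^t O_{K,{\mathcal P}_i}^\times$ are both finite, the exact sequence (\ref{exact}) then shows the maximal pro-$p$ quotient of ${\rm Cl}_K({\mathcal Q}^\infty)$ is infinite; dually, there are infinitely many continuous characters $\xi: {\rm Cl}_K({\mathcal Q}^\infty) \to \overbar{\mathbb{Q}}_p^\times$ of $p$-power order. This gives property (1).

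It remains to check properties (2) and (3) together with the extra unramifiedness assertion, for each such $\xi$ regarded as a character of $G_K$. Because $\xi$ factors through ${\rm Cl}_K({\mathcal Q}^\infty)$, it is unramified outside ${\mathcal Q}$ and the infinite places; in particular it is unramified at every prime of $K$ lying over ${\mathfrak n}_0$, which is the extra assertion of the lemma and makes the prime-to-$p$ part of $N_{K/F}({\rm cond}(\xi))$ trivial, so (2) holds a fortiori. For (3) I would use the description already employed in the proof of Lemma \ref{lem:tame-level}, namely that ${\rm ver}_{K/F}$ corresponds under class field theory to the natural map of id\'ele class groups induced by $\mathbb{A}_F^\times \hookrightarrow \mathbb{A}_K^\times$, whose local component at a prime ${\mathfrak l} \nmid p$ of $F$ is the diagonal embedding $O_{F,{\mathfrak l}}^\times \to \prod_{{\mathfrak L} \mid {\mathfrak l}} O_{K,{\mathfrak L}}^\times$. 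Since $\xi$ is unramified at each ${\mathfrak L} \mid {\mathfrak l}$, the composite $\xi \circ {\rm ver}_{K/F}$ is trivial on $O_{F,{\mathfrak l}}^\times$, hence unramified at ${\mathfrak l}$ for every ${\mathfrak l} \nmid p$, giving (3). The only genuine obstacle is the second step: once the pro-$p$ part of ${\rm Cl}_K({\mathcal Q}^\infty)$ is seen to be infinite via the rank bound, properties (2) and (3) follow formally from the local shape of the transfer map.
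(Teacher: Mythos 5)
Your proposal is correct and takes essentially the same route as the paper: the paper also realizes the desired characters as $p$-power order characters of ${\rm Cl}_K({\mathcal Q}^\infty)=\varprojlim_r {\rm Cl}_K({\mathcal Q}^r)$, noting via the exact sequence (\ref{exact}) that this group is infinite exactly when ${\rm Cl}_K({\mathfrak n}_0{\mathcal Q}^\infty)$ is, and that such characters, viewed on $G_K$, are unramified outside ${\mathcal Q}$ and the infinite places. The paper in fact records the lemma as a bare observation, so your two additional steps --- the ${\mathbb{Z}}_p$-rank argument ensuring the infiniteness is carried by the pro-$p$ part, and the id\'elic description of ${\rm ver}_{K/F}$ giving property $(3)$ --- are exactly the details it leaves implicit, and both are carried out correctly.
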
 

\subsection{Characters and classical weight one forms}
\label{sec:203}
In this subsection, we attach a classical weight one cuspidal eigenform to each character $\xi$ treated in the previous section. 
\begin{lemma}
\label{lem:292}
Let $\xi$ be a character satisfying the properties $(1)$ through $(3)$ in Lemma $\ref{lem:293}$ and put $\rho_\xi={\rm Ind}_K^F(\varphi \xi)$. Then $\rho_\xi$ is an irreducible, totally odd representation and is unramified outside ${\mathfrak n}_0pd_{K/F}$. 
\end{lemma}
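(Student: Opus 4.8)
The plan is to verify the three asserted properties of $\rho_\xi={\rm Ind}_K^F(\varphi\xi)$ one at a time, in each case reducing to what we already know about $\rho_f={\rm Ind}_K^F(\varphi)$ together with the three properties of $\xi$ recorded in Lemma \ref{lem:293}.

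For irreducibility I would appeal to Mackey's criterion as in Remark \ref{rmk:201}, so that it suffices to show $\varphi\xi \neq (\varphi\xi)^\sigma$. The key point is that $\xi$ has $p$-power order, so its reduction modulo the maximal ideal is trivial and $\overline{\varphi\xi}=\bar{\varphi}$. Since $\bar{\rho}_{\mathcal F}$ is irreducible we have $\bar{\varphi}\neq\bar{\varphi}^\sigma$ (Remark \ref{rmk:201}), and this inequality of reductions already forces $\varphi\xi\neq(\varphi\xi)^\sigma$; hence $\rho_\xi$ is irreducible.

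For total oddness I would use the determinant formula ${\rm det}({\rm Ind}_K^F(\psi))=\varepsilon_{K/F}\cdot(\psi\circ{\rm ver}_{K/F})$ already employed in the proof of Lemma \ref{lem:293}, which gives ${\rm det}(\rho_\xi)={\rm det}(\rho_f)\cdot(\xi\circ{\rm ver}_{K/F})$. Evaluating at a complex conjugation $c$, one has ${\rm det}(\rho_f)(c)=-1$ because $\rho_f$ is totally odd, so it remains to check $(\xi\circ{\rm ver}_{K/F})(c)=1$. This follows since $\xi\circ{\rm ver}_{K/F}$ has $p$-power order: its value at $c$ is a $p$-power root of unity squaring to $1$, and as $p$ is odd the only such root of unity is $1$. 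Therefore ${\rm det}(\rho_\xi)(c)=-1$ at every infinite place and $\rho_\xi$ is totally odd.

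Finally, for the ramification I would invoke the conductor--discriminant formula ${\rm cond}(\rho_\xi)=N_{K/F}({\rm cond}(\varphi\xi))\,d_{K/F}$. Since $\rho_f$ has conductor of the form ${\mathfrak n}_0p^r$, the divisor $d_{K/F}$ is supported on primes dividing ${\mathfrak n}_0p$, and at any prime ${\mathfrak q}$ of $F$ not dividing ${\mathfrak n}_0p$ (hence unramified in $K/F$) the character $\varphi$ is unramified at the primes of $K$ above ${\mathfrak q}$; thus $\varphi$ ramifies only over ${\mathfrak n}_0p$. By property $(2)$ of $\xi$ the same holds for $\xi$, which ramifies only over ${\mathfrak n}_0{\mathcal Q}$ with ${\mathcal Q}\mid p$. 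Hence $N_{K/F}({\rm cond}(\varphi\xi))$ is supported on primes dividing ${\mathfrak n}_0p$, and $\rho_\xi$ is unramified outside ${\mathfrak n}_0p\,d_{K/F}$. None of the three steps presents a serious obstacle; the only point demanding care is the elementary root-of-unity computation in the oddness step, which crucially uses that $p$ is odd.
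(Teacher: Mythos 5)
Your proof is correct and follows essentially the same route as the paper: all three steps hinge on the triviality of the reduction of the $p$-power order character $\xi$, the determinant formula for induced representations, and the conductor--discriminant formula. The only cosmetic difference is in the irreducibility step, where you argue via Mackey's criterion ($\overline{\varphi\xi}=\bar{\varphi}\neq\bar{\varphi}^\sigma$ forces $\varphi\xi\neq(\varphi\xi)^\sigma$) while the paper notes directly that $\bar{\rho}_\xi=\bar{\rho}_{\mathcal F}$ is irreducible and hence so is its lift $\rho_\xi$ --- two formulations that Remark \ref{rmk:201} already identifies as equivalent.
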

\begin{proof}
Since ${\mathcal F}$ is residually of dihedral type, the reduction $\bar{\rho}_\xi=\bar{\rho}_{\mathcal F}$ is irreducible, and so is $\rho_\xi$. We have ${\rm det}(\rho_\xi)=\varepsilon_{K/F} \cdot ((\varphi \xi) \circ {\rm ver}_{K/F})$ as a character of $G_F^{\rm ab}$ since $\rho_\xi$ is induced by $\varphi \xi$. As $p$ is odd and $\xi$ is a $p$-power order character, we have $(\xi \circ {\rm ver}_{K/F})(c)=1$ for any complex conjugate $c$ in $G_F$. Therefore the parity $\rho_\xi$ and $\rho_f$ are the same and thus $\rho_\xi$ is totally odd. The last assertion follows from conductor-discriminant formula (\ref{eqn:c-d}). 
\end{proof}
\begin{lemma}
\label{lem:wt-one}
Let $\rho_\xi$ be the induced representation in Lemma $\ref{lem:292}$. 
There exists a Hilbert modular form $f_\xi$ over $F$ of parallel weight one so that $\rho_{f_\xi} \cong \rho_\xi$.  
\end{lemma}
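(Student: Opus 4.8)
The plan is to realize $\rho_\xi = {\rm Ind}_K^F(\varphi\xi)$ as the representation attached to a parallel weight one Hilbert cuspidal eigenform by means of the theta series construction, equivalently automorphic induction, which is unconditional for dihedral (monomial) representations. First I would pass to the automorphic side: by global class field theory together with the fixed embeddings $\iota_p$ and $\iota$, the finite order character $\psi := \varphi\xi$ of $G_K$ corresponds to a finite order Hecke character of $K$, unramified outside ${\mathfrak n}_0{\mathcal Q}$ and the infinite places by Lemma \ref{lem:293}. Because ${\mathcal F}$ is residually of dihedral type we have $\bar{\varphi} \neq \bar{\varphi}^\sigma$ (Remark \ref{rmk:201}), and hence $\psi \neq \psi^\sigma$; this is precisely the condition guaranteeing that the automorphic induction of $\psi$ from $GL_1({\mathbb{A}}_K)$ to $GL_2({\mathbb{A}}_F)$ is a cuspidal automorphic representation $\pi_\xi$ rather than an Eisenstein one.

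The second, and main, step is to verify that $\pi_\xi$ is the automorphic representation of a holomorphic Hilbert cuspidal eigenform of parallel weight one. At the finite places the local components follow from the recipe for automorphic induction, so the crux is the archimedean behavior. At each real place $v$ of $F$ the Langlands parameter of $\pi_\xi$ is $\rho_\xi|_{D_v} = {\rm Ind}_{K_v}^{F_v}(\psi_v)$, and since $\psi$ has finite order this is a sum of two characters valued in $\{\pm 1\}$. I expect the decisive point to lie here: parallel weight one corresponds to the limit of discrete series, whose archimedean parameter is reducible and coincides with that of a principal series, so holomorphy must be extracted from within this $L$-packet. The input that makes this work is that $\rho_\xi$ is totally odd (Lemma \ref{lem:292}): oddness forces the parameter at each $v$ to be the sum of the trivial and the sign character, which is exactly the parameter of the weight one (limit of) discrete series. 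Concretely this is automatic where $v$ is complex in $K$ and is a genuine constraint on the local signs where $v$ is real in $K$. Rather than carry out this archimedean bookkeeping by hand, I would invoke the theorem of Rogawski--Tunnell \cite{RT} (together with Ohta \cite{O84}) in its converse form: a totally odd, irreducible, dihedral Artin representation of $G_F$ is the representation attached to a holomorphic Hilbert cuspidal eigenform of parallel weight one. Applied to $\rho_\xi$, this produces the desired form $f_\xi$.

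Finally I would confirm that $\rho_{f_\xi} \cong \rho_\xi$ by matching Hecke eigenvalues with Frobenius traces. For a prime ${\mathfrak l}$ of $F$ prime to ${\mathfrak n}_0 p\, d_{K/F}$ that splits in $K$ as ${\mathfrak l}O_K = {\mathfrak L}{\mathfrak L}^\sigma$, the $T_{\mathfrak l}$-eigenvalue of $f_\xi$ is $\psi({\mathfrak L}) + \psi({\mathfrak L}^\sigma) = {\rm Tr}\,\rho_\xi({\rm Frob}_{\mathfrak l})$, while for ${\mathfrak l}$ inert in $K$ both the eigenvalue and the trace vanish. Thus the semisimple representations $\rho_{f_\xi}$ and $\rho_\xi$ have equal traces on a set of Frobenii of density one, so Chebotarev's density theorem yields $\rho_{f_\xi} \cong \rho_\xi$, as required.
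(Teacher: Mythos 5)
Your proposal is correct and follows essentially the same route as the paper's proof: automorphic induction of the Hecke character attached to $\varphi\xi$, cuspidality coming from irreducibility of $\rho_\xi$ (equivalently $\varphi\xi \neq (\varphi\xi)^\sigma$), and total oddness forcing the archimedean parameter at each real place to be $1 \oplus \mathrm{sgn}$, so that $\pi_\infty \cong \pi(1,\mathrm{sgn})$ is precisely the parallel weight one component. One caveat on attribution: the ``converse form'' of Rogawski--Tunnell you invoke for holomorphy is not what \cite{RT} proves (their theorem goes from weight one forms to Artin representations), and the paper instead carries out exactly the archimedean bookkeeping you deferred, computing $WD_\infty(\rho_\xi)$ and identifying $\pi_\infty$ with ${\rm Ind}^{GL_2(\mathbb{R})}_{B(\mathbb{R})}(1 \otimes {\rm sgn})$ (citing \cite{KY} and \cite{Cas}) --- but since you sketch that same computation yourself, this is a presentational point rather than a gap.
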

\begin{proof}
Let $\tau_\xi$ be the automorphic form on $GL_1(\mathbb{A}_F)$ associated to $\xi$ via class field theory. Here ${\mathbb{A}}_F$ is the ring of ad\`eles of $F$. Let $\pi={\rm AI}^F_K(\tau_\xi)$ be the automorphic induction of $\tau_\xi$ which is a cuspidal automorphic representation of $GL_2(\mathbb{A}_F)$ so that $L(s,\pi)=L(s,\rho_\xi)$. Note that the irreducibility of $\rho_\xi$ 
implies the cuspidality of $\pi$. 
It follows from Lemma \ref{lem:292} that $\rho_\xi$ is totally odd and then we see that 
the Weil-Deligne representation $WD_\infty(\rho_\xi)$ associated to $\rho_\xi$ at $\infty$ is described as  
$$WD_\infty(\rho_\xi):W_{\mathbb{R}}=\mathbb{C}^\times\rtimes \{1,j\}\longrightarrow GL_2(\mathbb{C}),\ \mathbb{C}\ni z\mapsto I_2,\ j\mapsto 
\left(
\begin{array}{cc}
1 & 0 \\
0 & -1
\end{array}\right)
$$      
(see the argument in Proposition 2.1 of \cite{KY} and also \cite{Cas}). 
It follows from this that for each infinite place $\infty$ of $F$, one has   
\begin{align*}
 \pi_\infty\simeq \pi(1,{\rm sgn}) &:= {\rm Ind}^{GL_2(\mathbb{R})}_{B(\mathbb{R})}(1\otimes {\rm sgn}) 
\end{align*}
where $B({\mathbb{R}})$ is the group of upper-triangular matrices in $GL_2({\mathbb{R}})$, $1 \otimes {\rm sgn}: B({\mathbb{R}}) \rightarrow {\mathbb{R}}^\times$ is the group homomorphism defined by $\left(
\begin{smallmatrix}
 a & * \\
 0 & d
\end{smallmatrix}
\right) \mapsto {\rm sgn}(d)$ and ${\rm Ind}^{GL_2(\mathbb{R})}_{B(\mathbb{R})}(1\otimes {\rm sgn})$ is the parabolic induction by $1 \otimes {\rm sgn}$. This implies $\pi$ generated by a holomorphic Hilbert modular form of parallel weight one.  
\end{proof}
By Lemmas \ref{lem:292} and \ref{lem:wt-one}, we know that there exists a classical weight one cuspidal eigenform $f_\xi$ such that $\rho_\xi$ is equivalent to the representation $\rho_{f_\xi}$ associated to $f_\xi$. We shall observe that the $p$-stabilization(s) of $f_\xi$ can be obtained from the Hida community containing ${\mathcal F}$. More precisely, we are going to establish the following 
\begin{proposition}
\label{prop:211}
Let $f_\xi$ be as in Lemma $\ref{lem:wt-one}$. Then there exists a primitive $p$-ordinary cuspidal Hida family ${\mathcal F}_\xi$ which specializes to the $p$-stabilization(s) of $f_\xi$. Moreover, if $\xi$ is unramified at the prime ideals of $K$ lying over ${\mathfrak n}_0$, then the tame level of ${\mathcal F}_\xi$ is ${\mathfrak n}_0$ and hence ${\mathcal F}_\xi$ belongs to the same Hida community as ${\mathcal F}$. 
\end{proposition}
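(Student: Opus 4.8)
The plan is to read off the $p$-ordinary structure of $\rho_\xi$ at each prime above $p$, conclude that $f_\xi$ is $p$-ordinary, and then invoke Wiles' weight one existence theorem to produce the family; the tame level and residual representation are afterwards pinned down by the conductor-discriminant formula. First I would examine $\rho_\xi|_{D_{\mathfrak p}}$ for each prime ${\mathfrak p}$ of $F$ above $p$. By Lemma \ref{lem:202} such a ${\mathfrak p}$ splits in $K$, say ${\mathfrak p}O_K={\mathcal P}{\mathcal P}^\sigma$, so $D_{\mathfrak p}\subseteq G_K$ and $\rho_\xi|_{D_{\mathfrak p}}$ decomposes as $(\varphi\xi)|_{D_{\mathcal P}}\oplus(\varphi\xi)^\sigma|_{D_{\mathcal P}}$. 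Because $\varphi$ is unramified at ${\mathcal P}^\sigma$ and, by the proof of Lemma \ref{lem:293}, so is $\xi$, the conjugate character $(\varphi\xi)^\sigma=\varphi^\sigma\xi^\sigma$ is unramified at ${\mathcal P}$, and its value $(\varphi\xi)^\sigma({\rm Frob}_{\mathcal P})$ is a root of unity, hence a $p$-adic unit. This exhibits an unramified quotient of $\rho_\xi|_{D_{\mathfrak p}}$ with unit Frobenius eigenvalue, so $f_\xi$ is $p$-ordinary with ordinary $U_{\mathfrak p}$-eigenvalue $(\varphi\xi)^\sigma({\rm Frob}_{\mathcal P})$.

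With $p$-ordinariness in hand, I would form the $p$-stabilization of $f_\xi$ along the unit eigenvalue at each ${\mathfrak p}\mid p$: at primes where $\varphi\xi$ is already ramified the stabilization is forced, whereas at the remaining primes it involves the usual choice of the ordinary root, which accounts for the plural in ``$p$-stabilization(s)''. The resulting object is a $p$-ordinary, $p$-stabilized weight one eigenform, and Wiles' existence theorem (Theorem 3; p.~532 of \cite{W88}, cf. Remark \ref{rmk:106}) then furnishes a primitive $p$-ordinary cuspidal Hida family ${\mathcal F}_\xi$ specializing to it. Since the weight one case carries no uniqueness (Remark \ref{rmk:106}), the point to be careful about is to take the \emph{primitive} family attached to the $p$-stabilized form, so that its tame level is exactly the tame conductor of $f_\xi$.

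For the final assertion I would compute this tame level, which is the prime-to-$p$ part of ${\rm cond}(\rho_\xi)$. As each ${\mathfrak p}\mid p$ splits in $K$, the extension $K/F$ is unramified above $p$ and $d_{K/F}$ is prime to $p$. Comparing the conductor-discriminant formula (\ref{eqn:c-d}) for $\rho_\xi={\rm Ind}_K^F(\varphi\xi)$ and for $\rho_f={\rm Ind}_K^F(\varphi)$, whose tame conductor is ${\mathfrak n}_0$ by (P2), I observe that if $\xi$ is unramified at the primes of $K$ above ${\mathfrak n}_0$ then ${\rm cond}(\varphi\xi)$ and ${\rm cond}(\varphi)$ have the same prime-to-$p$ part; hence so do their norms, and the prime-to-$p$ part of ${\rm cond}(\rho_\xi)$ equals ${\mathfrak n}_0$. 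Finally, the residual representation of ${\mathcal F}_\xi$ is the reduction of $\rho_\xi$, namely ${\rm Ind}_K^F(\overline{\varphi\xi})$; as $\xi$ has $p$-power order its reduction is trivial, so this is ${\rm Ind}_K^F(\bar\varphi)=\bar\rho_{\mathcal F}$. Thus ${\mathcal F}_\xi$ shares the tame level ${\mathfrak n}_0$ and the residual representation of ${\mathcal F}$, and so lies in the same Hida community by Definition \ref{defn:109}.

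I expect the main obstacle to be the local analysis at $p$ in the first two paragraphs: tracking the ramification of $\varphi$ and $\xi$ at ${\mathcal P}$ and ${\mathcal P}^\sigma$ to isolate the unit eigenvalue, and then correctly bookkeeping the $p$-stabilization — and extracting a genuinely primitive family of the right tame level — in the absence of any weight one uniqueness statement.
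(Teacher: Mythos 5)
Your proposal is correct and follows essentially the same route as the paper: track the ramification of $\varphi\xi$ at ${\mathcal P}_i$ and ${\mathcal P}_i^\sigma$ to see that the ${\mathfrak p}_i$-stabilization(s) are ordinary (forced when $\varphi\xi$ is ramified at ${\mathcal P}_i$, a choice of unit root otherwise), invoke Wiles' weight one existence theorem, and compute the tame level by the conductor-discriminant formula. The only cosmetic difference is that you phrase ordinarity via the local Galois decomposition $(\varphi\xi)|_{D_{\mathcal P}}\oplus(\varphi\xi)^\sigma|_{D_{\mathcal P}}$, while the paper phrases the same computation via the Fourier coefficient $c({\mathfrak p}_i, f_\xi)=(\varphi\xi)({\mathcal P}_i^\sigma)$ and the level of $f_\xi$; also, the fact that the prime-to-$p$ part of ${\rm cond}(\rho_f)$ is ${\mathfrak n}_0$ comes from the primitivity of ${\mathcal F}$ rather than from (P2) itself.
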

\begin{proof}
For each $i=1, \ldots, t$, since $\varphi$ and $\xi$ are unramified at ${\mathcal P}_i^\sigma$, $\varphi \xi$ is ramified at ${\mathcal P}_i$ if and only if $N_{K/F}({\rm cond}(\varphi \xi))$ is divisible by ${\mathfrak p}_i$. Conductor-discriminant formula ${\rm cond}(\rho_{f_\xi})=N_{K/F}({\rm cond}(\varphi \xi))d_{K/F}$ implies that these equivalent conditions hold true if and only if ${\rm cond}(\rho_{f_\xi})$ is divisible by ${\mathfrak p}_i$. As the conductor of $\rho_{f_\xi}$ is equal to the level of $f_\xi$, $\varphi \xi$ is ramified at ${\mathcal P}_i$ if and only if the level of $f_\xi$ is divisible by ${\mathfrak p}_i$. If this is the case, we have $(\varphi \xi)({\mathcal P}_i)=0$ and $(\varphi \xi)({\mathcal P}_i^\sigma) \neq 0$, and thus the normalized Fourier coefficient $c({\mathfrak p}_i, f_\xi)=(\varphi \xi)({\mathcal P}_i^\sigma)$ is a root of unity. Therefore $f_\xi$ is stabilized and ordinary at ${\mathfrak p}_i$. Suppose, on the contrary, that $\varphi \xi$ is unramified at ${\mathcal P}_i$. Then we have
\begin{align*}
 {\rm det}(XI_2-\rho_{f_\xi}({\rm Frob}_{{\mathfrak p}_i})) &= (X-(\varphi \xi)({\mathcal P}_i))(X-(\varphi \xi)({\mathcal P}_i^\sigma))
\end{align*}
and both $(\varphi \xi)({\mathcal P}_i)$ and $(\varphi \xi)({\mathcal P}_i^\sigma)$ are roots of unity. Therefore the two ${\mathfrak p}_i$-stabilizations of $f_\xi$ are both ordinary at ${\mathfrak p}_i$. In any case we can conclude that there is a $p$-stabilized newform $f_\xi^*$ such that the associated representation is equivalent to $\rho_\xi$. Then by a theorem of Wiles (Theorem 3; p.~532 of \cite{W88}) there exists a primitive $p$-ordinary cuspidal Hida family ${\mathcal F}_\xi$ specializing to $f_\xi^*$. In particular the residual representation $\bar{\rho}_{{\mathcal F}_\xi}$ is equivalent to $\bar{\rho}_{\mathcal F}$ by construction. 

The tame level of ${\mathcal F}_\xi$ is equal to the prime-to-$p$ part of the level of $f_\xi$. If $\xi$ is unramified at the prime ideals of $K$ sitting above ${\mathfrak n}_0$, then the prime-to-$p$ part of $N_{K/F}({\rm cond}(\varphi \xi))$ is equal to that of $N_{K/F}({\rm cond}(\varphi))$. We apply conductor-discriminant formula to see that the prime-to-$p$ part of ${\rm cond}(\rho_{f_\xi})$ and that of ${\rm cond}(\rho_f)$ coincide. Thus the prime-to-$p$ part of the level of $f_\xi$ and that of $f$ are the same. Since ${\mathcal F}$ is primitive of tame level ${\mathfrak n}_0$, the prime-to-$p$ part of the level of $f$ is ${\mathfrak n}_0$. Therefore the prime-to-$p$ part of the level of $f_\xi$ is also ${\mathfrak n}_0$, which shows the assertion. 
\end{proof}

\subsection{The proof of the main theorem}
\label{sec:204}
Now we complete the proof of Theorem \ref{theo:205}. 
\begin{proof}
Suppose that the ray class group ${\rm Cl}_K({\mathfrak n}_0{\mathcal Q}^\infty)$ is an infinite group. Then Lemma \ref{lem:208} tells us that there are infinitely many $p$-power order characters $\xi: G_K \rightarrow \overbar{\mathbb{Q}}_p^\times$ that are unramified outside ${\mathcal Q}$ and the infinite places and $\xi \circ {\rm ver}_{K/F}: G_F \rightarrow \overbar{\mathbb{Q}}_p^\times$ is unramified outside $p$. Each of such characters $\xi$ produces a $p$-stabilized newform of weight one which is obtained from the Hida community $\left\{\mathcal F \right\}$ and the associated Galois representation $\rho_\xi$ satisfies $\rho_\xi \cong \varepsilon_{K/F} \otimes \rho_\xi$. Hence the community $\left\{\mathcal F \right\}$ has infinitely many classical weight one specializations that have multiplication by $K$. Since a Hida community is a finite set, there exists a member ${\mathcal G}$ of $\left\{\mathcal F \right\}$ which admits infinitely many such specializations. With Theorem \ref{theo:107} in mind, we conclude that $K/F$ is totally imaginary and the family ${\mathcal G}$ has CM by $K$. The main theorem is proved. 
\end{proof}

\subsection{Families residually of CM type}
\label{sec:205}
As we declared in Remark \ref{rmk:291} (\ref{rmk:293}), we end this section by giving an upper bound when ${\mathcal F}$ is residually of CM type. Let ${\mathcal F}$ be a non-CM primitive $p$-ordinary Hida family such that $\bar{\rho}_{\mathcal F} \cong {\rm Ind}_K^F(\bar{\varphi})$ for some totally imaginary quadratic extension $K/F$ and a character $\bar{\varphi}: G_K \rightarrow {\mathbb{F}}^\times$. Since ${\mathcal F}$ is not of CM type, there is at least one prime ideal ${\mathfrak l}$ in $F$ that is inert in $K$, prime to ${\mathfrak n}_0p$ and the trace ${\rm Tr}\rho_{\mathcal F}({\rm Frob}_{\mathfrak l})$ is non-zero (see the proof of Theorem \ref{theo:107}). Therefore ${\rm Tr}\rho_{\mathcal F}({\rm Frob}_{\mathfrak l})$ is contained in only finitely many height one prime ideals of $\Lambda_L$. In particular, there are only finitely many height one prime ideals of $\Lambda_L$ lying over $P_{1, \varepsilon}$ for some $p$-power order character $\varepsilon: {\mathbf{G}} \rightarrow \overbar{\mathbb{Q}}_p^\times$. Let $\lambda_{{\mathcal F}, {\mathfrak l}}$ denote this number and put $\lambda_{\mathcal F}={\rm min}\left\{\lambda_{{\mathcal F}, {\mathfrak l}} \mid \text{${\mathfrak l}$ is inert in $K$ and prime to ${\mathfrak n}_0p$} \right\}$. 
\begin{proposition}[cf. Lemma 6.5; p.~682 of \cite{DG}]
\label{prop:294}
Let ${\mathcal F}$ be a non-CM family that is residually of CM type by $K$ in the above sense. Then the number of classical weight one specializations of ${\mathcal F}$ is bounded by $\lambda_{\mathcal F}$. 
\end{proposition}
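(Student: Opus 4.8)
The plan is to bound the number of weight one specializations of $\mathcal{F}$ by showing that each such specialization corresponds to a height one prime ideal of $\Lambda_L$ lying over some $P_{1,\varepsilon}$, and that the chosen trace value ${\rm Tr}\rho_{\mathcal F}({\rm Frob}_{\mathfrak l})$ (for an inert prime $\mathfrak{l}$) must vanish at every such prime, whence the count is controlled by $\lambda_{\mathcal F}$. First I would fix a prime ideal $\mathfrak{l}$ of $F$ that is inert in $K$, prime to ${\mathfrak n}_0 p$, and with $t_{\mathfrak l} := {\rm Tr}\rho_{\mathcal F}({\rm Frob}_{\mathfrak l}) \neq 0$; such an $\mathfrak{l}$ exists because $\mathcal{F}$ is not of CM type, exactly as in the proof of Theorem \ref{theo:107}. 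Since $\Lambda$ is (after fixing the topological generator) the one-variable power series ring ${\mathcal O}[[X]]$, the nonzero element $t_{\mathfrak l} \in \Lambda_L$ lies in only finitely many height one primes of $\Lambda_L$; among these, $\lambda_{{\mathcal F},{\mathfrak l}}$ counts precisely those sitting above a prime of the form $P_{1,\varepsilon}$ corresponding to a weight one point.

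The crux is to show that any classical weight one specialization $f'$ of $\mathcal{F}$ forces $t_{\mathfrak l}$ to vanish at the associated height one prime $P'$ of $\Lambda_L$. Here I would use condition (P1): since $\mathcal{F}$ is residually of dihedral (indeed CM) type, any weight one specialization $f'$ is itself of dihedral type, and the same argument as in Lemma \ref{rmk:221} shows $\rho_{f'} \cong {\rm Ind}_K^F(\varphi')$ for some finite order character $\varphi'$ of $G_K$ lifting $\bar\varphi$. Because $\mathfrak{l}$ is inert in $K$, the geometric Frobenius ${\rm Frob}_{\mathfrak l}$ lies outside $G_K$, so the induced representation has ${\rm Tr}\,\rho_{f'}({\rm Frob}_{\mathfrak l}) = 0$. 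On the other hand, this trace is the reduction of $t_{\mathfrak l} = {\rm Tr}\rho_{\mathcal F}({\rm Frob}_{\mathfrak l})$ modulo $P'$, via the compatibility of $\rho_{\mathcal F}$ with specializations and the trace formula \eqref{eq:trace}. Hence $t_{\mathfrak l} \in P'$, i.e.\ $P'$ is one of the height one primes containing $t_{\mathfrak l}$ and lying over a weight one point $P_{1,\varepsilon'}$.

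This yields an injection from the set of classical weight one specializations of $\mathcal{F}$ into the finite set of height one primes of $\Lambda_L$ that contain $t_{\mathfrak l}$ and lie over some $P_{1,\varepsilon}$, whose cardinality is $\lambda_{{\mathcal F},{\mathfrak l}}$. Optimizing over all admissible inert primes $\mathfrak{l}$ gives the bound $\lambda_{\mathcal F} = \min_{\mathfrak l}\lambda_{{\mathcal F},{\mathfrak l}}$, as claimed. The step I expect to require the most care is the assignment of a well-defined height one prime $P'$ of $\Lambda_L$ to each weight one form $f'$: one must verify that distinct specializations give distinct primes (so the map is genuinely injective) and that $\rho_{f'}$ really is the reduction of $\rho_{\mathcal F}$ at $P'$. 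The latter is where I would invoke that $\rho_{\mathcal F}$ reduces to $\rho_{f'}$ modulo $P'$ with matching traces by \eqref{eq:trace}, so that ${\rm Tr}\,\rho_{f'}({\rm Frob}_{\mathfrak l}) = (t_{\mathfrak l} \bmod P')$; the vanishing of the left side then places $P'$ among the finitely many primes counted by $\lambda_{{\mathcal F},{\mathfrak l}}$.
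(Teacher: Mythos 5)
Your proposal is correct and follows essentially the same route as the paper (which itself defers to Lemma 6.5 of \cite{DG}): pick an inert prime ${\mathfrak l}$ prime to ${\mathfrak n}_0p$ with ${\rm Tr}\rho_{\mathcal F}({\rm Frob}_{\mathfrak l}) \neq 0$, note via the argument of Lemma \ref{rmk:221} that every classical weight one specialization is induced from $K$ and so has vanishing trace at ${\rm Frob}_{\mathfrak l}$, conclude by \eqref{eq:trace} that its prime contains ${\rm Tr}\rho_{\mathcal F}({\rm Frob}_{\mathfrak l})$, and count such primes over weight one points. Your bookkeeping of specializations versus primes (each form arises from at least one prime lying over some $P_{1,\varepsilon}$, so the count of forms is bounded by $\lambda_{{\mathcal F},{\mathfrak l}}$) is exactly what is needed.
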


\section{Families residually of exceptional type}
We give an upper bound on the number of classical weight one forms in a primitive $p$-ordinary non-CM cuspidal Hida family ${\mathcal F}$ residually of exceptional type. Let $p^r$ be the $p$-part of the class number $|{\rm Cl}_F|$ of $F$ and ${\mathfrak p}_1, \ldots, {\mathfrak p}_t$ the prime ideals of $F$ lying over $p$. 
\begin{theorem} 
\label{theo:301}
If ${\mathcal F}$ is residually of exceptional type, then ${\mathcal F}$ has at most $a \cdot b$ classical weight one specializations, where
\begin{itemize}
\item $a=1$, except $p=5$ and the type of ${\mathcal F}$ is $A_5$, in which case $a=2$, and
\item $b=p^r$, except $p=3$ or $5$, in which case $b=2^t \cdot p^r$. 
\end{itemize}
In particular, ${\mathcal F}$ has at most one weight one specialization if $p \geq 7$ and the class number $|{\rm Cl}_F|$ is not divisible by $p$. 
\end{theorem}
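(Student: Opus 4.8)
The plan is to fix one classical weight one specialization $f$ of $\mathcal F$ (if none exists the bound is vacuous) and to control every other weight one specialization $f'$ relative to $f$. By Lemma~\ref{lem:110} and the remark following Definition~\ref{defn:111}, every classical weight one specialization of $\mathcal F$ is again of exceptional type, with projective image a fixed finite group $\Gamma\in\{A_4,S_4,A_5\}$. Moreover, since reduction can only shrink the image while both projective images have the same order as that of $\overline{\rho}_{\mathcal F}^{\rm ss}$, the field cut out by $\mathrm{Proj}(\rho_f)$ is the same extension $\bar M/F$ for every such $f$, with $\mathrm{Gal}(\bar M/F)\cong\Gamma$. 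Thus the count splits as (number of admissible projective lifts $\mathrm{Proj}(\rho_f)$ of the fixed residual projective representation) times (number of forms sharing a single projective lift), and these two factors will produce $a$ and $b$ respectively.

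For the factor $a$ I would analyze the reduction map on the finite exceptional subgroup. Two specializations with distinct projectivizations both reduce to the fixed embedding $\Gamma\hookrightarrow PGL_2(\mathbb F)$, so I must count the faithful $2$-dimensional projective representations of $\Gamma$ reducing to it. When $p\nmid|\Gamma|$, the Brauer theory of lifting representations of a finite group gives a unique lift, so $a=1$; this covers all $p\ge 7$. For $p=3$ the groups $A_4\cong PSL_2(\mathbb F_3)$ and $S_4\cong PGL_2(\mathbb F_3)$ fall under case (2) of Dickson's list rather than the exceptional case, so they cannot occur residually, leaving only type $A_5$, for which the lift is still unique ($a=1$). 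The one genuinely non-injective situation is $A_5$ with $p=5$: the two $2$-dimensional representations of the binary icosahedral group $2.A_5$ are interchanged by $\sqrt5\mapsto-\sqrt5$ and become equal modulo $5$, producing two lifts and hence $a=2$.

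For the factor $b$ I would fix a projective lift and write $\rho_{f'}\cong\rho_f\otimes\eta$ for a finite order character $\eta\colon G_F\to\overline{\mathbb Q}_p^\times$. Reducing and using that $\overline{\rho}_{\mathcal F}$ is irreducible and \emph{non-dihedral} forces $\overline{\rho}_{\mathcal F}\otimes\bar\eta\cong\overline{\rho}_{\mathcal F}$ with $\bar\eta=1$, so $\eta$ has $p$-power order. Comparing determinants gives $\eta^2=\det\rho_{f'}/\det\rho_f$, which is unramified outside $p$ and totally even; since $p$ is odd and $\eta$ is a $p$-group element, $\eta$ itself is unramified outside $p$ and totally even. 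The remaining issue is the behaviour at the primes ${\mathfrak p}\mid p$: both $\rho_f$ and $\rho_{f'}$ are $p$-ordinary with an \emph{unramified} quotient character, and matching these filtrations after twisting by $\eta$ leaves at each ${\mathfrak p}$ only two possibilities—either $\eta$ is unramified at ${\mathfrak p}$, or $\rho_f$ is split at ${\mathfrak p}$ and $\eta\vert_{I_{\mathfrak p}}$ is forced to equal a prescribed (ramified) character. The first alternative, imposed at every ${\mathfrak p}$, reduces $\eta$ to a $p$-power order character unramified everywhere, i.e.\ a character of the $p$-part of $\mathrm{Cl}_F$, of which there are exactly $p^r$, giving $b=p^r$. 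For the small primes $p=3$ and $p=5$ the clean argument degrades—an element of order $p$ can enter the cyclic image of inertia and the ordinary line loses its rigidity—so at each of the $t$ primes above $p$ one is left with an additional binary ambiguity, which I would absorb into the extra factor $2^t$, yielding $b=2^tp^r$. Multiplying the two counts bounds the specializations by $a\cdot b$; when $p\ge 7$ and $p\nmid|\mathrm{Cl}_F|$ one has $a=1$ and $b=p^0=1$, so there is at most one.

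The hard part will be the local analysis at $p$ behind the factor $b$, especially for $p=3,5$. One must verify that the only way to keep both specializations $p$-ordinary with unramified quotient, while remaining compatible with the bounded cyclic image of inertia sitting inside the exceptional group $\Gamma$, is precisely the dichotomy above, and then convert the resulting ramification constraints into a clean class-field-theoretic count producing exactly $p^r$ (respectively $2^tp^r$). Tracking the interaction between the tame part of $\psi_{\mathcal F}$ and the wild ($p$-power) part of $\eta$ at each ${\mathfrak p}$, and excluding the split alternative when $p\ge 7$, is where the argument demands the most care.
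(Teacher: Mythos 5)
Your overall strategy coincides with the paper's: split the count into a ``projective'' factor (your count of projective lifts; the paper's count of projective traces) times a ``twist'' factor (your count of characters $\eta$; the paper's count of determinants $\psi_{\mathcal F}\varepsilon$), then use ordinarity at $p$ and class field theory for the second factor. The decomposition itself is sound, but there are two genuine gaps. First, your treatment of $p=3$ discards the types $A_4$ and $S_4$ on the grounds that $A_4\cong PSL_2({\mathbb F}_3)$ and $S_4\cong PGL_2({\mathbb F}_3)$ ``fall under case (2) of Dickson's list rather than the exceptional case.'' This is not a valid exclusion: Dickson's cases are not mutually exclusive, and by Definition \ref{defn:111} a family whose residual projective image is isomorphic to $S_4$ (even if that group happens to be $PGL_2({\mathbb F}_3)$) \emph{is} residually of exceptional type, so the theorem must cover it; your Brauer-lifting argument, which needs $p\nmid|\Gamma|$, does not apply there. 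The paper handles all cases uniformly: since $\Gamma\in\{A_4,S_4,A_5\}$ has no nontrivial normal $p$-subgroup, reduction is injective on the projective image of $\rho_f$, so the order of $\mathrm{Proj}\,\rho_f(g)$ is determined by $\bar\rho_{\mathcal F}(g)$, and in characteristic zero that order ($1,2,3,4,5$) determines the projective trace (values $4,0,1,2$, or a root of $X^2-3X+1$); this gives $a=1$ for $A_4,S_4$ for every odd $p$, the only ambiguity being the two roots of $X^2-3X+1$, which collide exactly when $p=5$.

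Second, and more seriously, the step you yourself flag as ``the hard part''---excluding the ramified alternative at ${\mathfrak p}\mid p$ when $p\geq 7$---is the core of the proof, and you do not supply it. The paper's argument is short: ordinarity gives $\rho_f|_{I_{\mathfrak p}}\cong\left(\begin{smallmatrix}\det\rho_f & *\\ 0 & 1\end{smallmatrix}\right)$, so any element of $\rho_f(I_{\mathfrak p})$ that is trivial in $PGL_2(O)$ is a scalar with prescribed eigenvalue $1$, hence trivial; thus $\rho_f(I_{\mathfrak p})$ injects into the projective image, whose elements have order at most $5$. Writing $\det\rho_f=\psi_{\mathcal F}\varepsilon$ with $\psi_{\mathcal F}$ tamely ramified at ${\mathfrak p}$ and $\varepsilon$ of $p$-power order, the order of $\varepsilon|_{I_{\mathfrak p}}$ is a $p$-power at most $5$, hence trivial when $p\geq 7$. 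This is precisely what forces every admissible twist $\eta$ (equivalently every $\varepsilon$) to be unramified at $p$ for $p\geq 7$, yielding $b=p^r$ and the ``at most one'' conclusion when moreover $p\nmid|{\rm Cl}_F|$; without it your $p\geq 7$ case is unproven. Your $p=3,5$ bookkeeping (the set $S$ of primes above $p$ where the ramification types differ, giving $2^t$, times the $p^r$ everywhere-unramified characters) does match the paper's argument, so once the two points above are repaired your proof closes along the same lines as the paper's.
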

\begin{proof}
The proof relies on that of Theorem 5.1; p.~676 of \cite{DG}. We note that the Galois representation $\rho_f: G_F \rightarrow GL_2(O)$ attached to a classical weight one specialization $f$ of ${\mathcal F}$ (if any) is irreducible and hence determined by the trace. Therefore it is enough to show that there are at most $a$ choices for the projective trace ${({\rm Tr}\rho_f)^2}/{{\rm det}\rho_f}$, and for each of them there are at most $b$ choices for the determinant of $\rho_f$. Indeed, since ${\rm Tr}\rho_f$ is congruent to ${\rm Tr}\bar{\rho}_{\mathcal F}$ and $p$ is odd, ${\rm Tr}\rho_f$ is uniquely determined by $({\rm Tr}\rho_f)^2$. 

Since ${\mathcal F}$ is residually of $A_4$, $S_4$ or $A_5$ type, the image of $\bar{\rho}_{\mathcal F}(g)$ in $PGL_2({\mathbb{F}})$ has order at most five. A standard computation shows that the projective trace ${{\rm Tr}\bar{\rho}_{\mathcal F}(g)^2}/{{\rm det}\bar{\rho}_{\mathcal F}(g)}$ of $\bar{\rho}_{\mathcal F}(g)$ varies as in \textsc{Table \ref{table1}}, according to the order of $\bar{\rho}_{\mathcal F}(g)$ in $PGL_2({\mathbb{F}})$: 
\begin{table}[h]
\caption{} \label{table1}
  \begin{tabular}{|c||c|c|c|c|c|} \hline
    order of $\bar{\rho}_{\mathcal F}(g)$ in $PGL_2({\mathbb{F}})$ & $1$ & $2$ & $3$ & $4$ & $5$ \\ \hline
    projective trace of $\bar{\rho}_{\mathcal F}(g)$ & $4$ & $0$ & $1$ & $2$ & a root of $X^2-3X+1$ \\ \hline
  \end{tabular}
\end{table}
\newline Therefore, if ${\mathcal F}$ is residually of $A_4$ or $S_4$ type, there is at most one choice for the projective trace ${({\rm Tr}\rho_f)^2}/{{\rm det}\rho_f}$ of $f$. As for $A_5$ case, the two roots of $X^2-3X+1$ are congruent modulo $p$ if and only if $p=5$. Thus there are at most two choices for ${({\rm Tr}\rho_f)^2}/{{\rm det}\rho_f}$. 

It remains to be shown that there are at most $b$ choices for the determinant of ${\rm det}(\rho_f)$ with a given projective trace. Since ${\mathcal F}$ is $p$-ordinary, we have
\begin{align*}
 \rho_f|_{I_{\mathfrak p}} & \cong \left(
\begin{array}{cc}
 {\rm det}(\rho_f) & * \\
 0 & 1
\end{array}
\right)
\end{align*}
for each prime ${\mathfrak p}$ of $F$ lying over $p$. This implies that $\rho_f(I_{\mathfrak p})$ injects into $PGL_2(O)$. Therefore the order of ${\rm det}(\rho_f)=\psi_{\mathcal F} \varepsilon$ restricted to $I_{\mathfrak p}$ is at most five, where $\varepsilon: {\mathbf{G}} \rightarrow \overbar{\mathbb{Q}}_p^\times$ is the $p$-power order character such that $\varphi_{1, \varepsilon}({\mathcal F})=f$. Recall that $\psi_{\mathcal F}$ is tamely ramified at ${\mathfrak p}$ and thus $\psi_{\mathcal F}|_{I_{\mathfrak p}}$ has order prime to $p$. If $p \geq 7$ this implies that $\varepsilon|_{I_{\mathfrak p}}=1$ for all ${\mathfrak p}$ lying over $p$. Consequently $\varepsilon$ is unramified everywhere and hence is a character of the ideal class group of $F$. This proves the assertion when $p \geq 7$. Throughout the rest of the proof, we assume $p=3$ or $5$. Suppose that $f=\varphi_{1, \varepsilon}({\mathcal F})$ and $f'=\varphi_{1, \varepsilon'}({\mathcal F})$ are two classical weight one forms in ${\mathcal F}$ having the same projective trace. Then there exists a $p$-power order character $\eta: G_F \rightarrow \overbar{\mathbb{Q}}_p^\times$ so that $\rho_f \cong \eta \otimes \rho_{f'}$. This immediately yields the equality $\varepsilon=\eta^2 \varepsilon'$. Note that $\eta$ is uniquely determined by $f'$. Since $\varepsilon$ and $\varepsilon'$ are unramified outside $p$, so is $\eta^2$. Moreover $\eta$ is of $p$-power order and $p$ is odd. This reveals that $\eta$ is unramified outside $p$. We investigate the behavior of $\eta$ at each prime ${\mathfrak p}$ lying over $p$. The relation $\rho_f \cong \eta \otimes \rho_{f'}$ restricted to $I_{\mathfrak p}$ implies that at least one of the following occurs: 
\begin{enumerate}
\item $\eta=1$ and $\varepsilon'=\varepsilon$ on $I_{\mathfrak p}$; 
\item $\eta=\varepsilon=(\varepsilon')^{-1}$ and $\psi_{\mathcal F}=1$ on $I_{\mathfrak p}$. 
\end{enumerate}
Let $S$ be the set of primes ideals ${\mathfrak p}$ lying over $p$ such that $\varepsilon'|_{I_{\mathfrak p}} \neq \varepsilon|_{I_{\mathfrak p}}$. Then $\eta$ is unramified outside $S$ and $\eta|_{I_{\mathfrak p}}=\varepsilon|_{I_{\mathfrak p}}$ for each ${\mathfrak p} \in S$. Let $\zeta: G_F \rightarrow \overbar{\mathbb{Q}}_p^\times$ be a $p$-power order character and suppose that $\eta \zeta$ satisfies the same ramification condition as that of $\eta$. Then $\zeta$ is unramified everywhere so there are $p^r$ choices for $\zeta$ (recall that $p^r$ is the $p$-part of the class number $|{\rm Cl}_F|$). Obviously the set $S$ depends on $\varepsilon'$ and there are at most $2^t$ choices for $S$. Hence ${\mathcal F}$ has at most $2^t \cdot p^r$ weight one specializations, which proves the theorem. 
\end{proof}
The following proposition is a generalization of Proposition 5.2; p.~678 of \cite{DG}: 
\begin{proposition}
\label{prop:303}
Let $p \geq 7$ be a prime number that splits completely in $F$ and $\left\{{\mathcal F} \right\}$ is a Hida community of exceptional type which is $p$-distinguished and the residual representation $\bar{\rho}_{\mathcal F}$ is absolutely irreducible when restricted to ${\rm Gal}(\overbar{F}/F(\zeta_p))$. Assume further that the tame level ${\mathfrak n}_0$ of $\left\{{\mathcal F} \right\}$ is the same as the conductor of $\bar{\rho}_{\mathcal F}$. Then $\left\{{\mathcal F} \right\}$ has at least one classical weight one specialization $f$. Moreover, any other classical weight one specialization of $\left\{{\mathcal F} \right\}$ can be written as $f \otimes \eta$, where $\eta: G_F \rightarrow {\mathbb{Q}}_p^\times$ is a $p$-power order character of conductor dividing ${\mathfrak n}_0$. 
\end{proposition}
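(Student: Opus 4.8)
The plan is to follow the proof of Proposition 5.2 of \cite{DG}: construct the weight one form by lifting the projective residual representation to characteristic zero and feeding the resulting Artin representation into a modularity lifting theorem, exactly as \cite{BGV} handle weight one specializations. The projectivization $\mathbb{P}\bar\rho_{\mathcal F}: G_F \to PGL_2(\mathbb{F})$ has image $A_4$, $S_4$ or $A_5$ by hypothesis, and since $p \geq 7$ divides none of $|A_4|=12$, $|S_4|=24$, $|A_5|=60$, this image has order prime to $p$. A finite subgroup of $PGL_2(\mathbb{F})$ of order prime to $p$ lifts isomorphically through $PGL_2(\mathcal{O}) \to PGL_2(\mathbb{F})$ for a suitable finite extension $\mathcal{O}$ of $\mathbb{Z}_p$, giving a projective representation $\tilde{\mathbb{P}}: G_F \to PGL_2(\mathcal{O})$ with the same exceptional image that reduces to $\mathbb{P}\bar\rho_{\mathcal F}$. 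The obstruction to lifting $\tilde{\mathbb{P}}$ to $GL_2(\overbar{\mathbb{Q}}_p)$ lies in $H^2(G_F, \overbar{\mathbb{Q}}_p^\times)$ and vanishes by Tate's theorem, so $\tilde{\mathbb{P}}$ comes from a genuine finite-image representation; twisting by a suitable finite order character I obtain an irreducible, totally odd Artin representation $\rho$ that reduces to $\bar\rho_{\mathcal F}$ and has determinant of the form $\psi_{\mathcal F}\varepsilon$ with $\varepsilon$ of $p$-power order. Because the image of each inertia group $I_{\mathfrak p}$ ($\mathfrak p \mid p$) in $PGL_2$ is cyclic of order prime to $p$, the restriction $\rho|_{D_{\mathfrak p}}$ is a sum of characters and $\rho$ is $p$-ordinary; and as $\mathfrak n_0$ equals the Artin conductor of $\bar\rho_{\mathcal F}$ by assumption, one arranges $\mathrm{cond}(\rho) = \mathfrak n_0$.

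Next I would invoke modularity. Now $\rho$ is an irreducible, totally odd, $p$-ordinary Artin representation of conductor $\mathfrak n_0$ lifting the modular residual representation $\bar\rho_{\mathcal F}$. The present hypotheses are exactly (A1) and (A2) of Remark \ref{rmk:108} ($p$ splits completely, $\{\mathcal F\}$ is $p$-distinguished, and $\bar\rho_{\mathcal F}$ is absolutely irreducible over $\mathrm{Gal}(\overbar{F}/F(\zeta_p))$), so the modularity lifting theorem (Theorem 3; p.~999 of \cite{Sa10}) applies to $\rho$ in the form used in \cite{BGV}. It identifies $\rho$ with the representation $\rho_f$ of a weight one point $f$ of the Hida community $\{\mathcal F\}$: the weight is one because $\det\rho$ has finite order, the tame level is $\mathfrak n_0$ because $\mathrm{cond}(\rho)=\mathfrak n_0$, and this weight one specialization is classical for the same reason as in the proof of Theorem \ref{theo:107}. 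This produces the desired $f$.

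For the final assertion, let $f'$ be any other classical weight one specialization of $\{\mathcal F\}$, with Artin representation $\rho_{f'}$. Both $\mathbb{P}\rho_f$ and $\mathbb{P}\rho_{f'}$ are characteristic-zero lifts of $\mathbb{P}\bar\rho_{\mathcal F}$ with finite image of order prime to $p$; since such a projective representation admits a unique lift up to conjugacy, $\mathbb{P}\rho_{f'} \cong \mathbb{P}\rho_f$, and hence $\rho_{f'} \cong \eta \otimes \rho_f$ for some character $\eta: G_F \to \overbar{\mathbb{Q}}_p^\times$. Reducing modulo $p$ forces $\eta \equiv 1$, so $\eta$ is of $p$-power order. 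As in the proof of Theorem \ref{theo:301}, the $p$-ordinary structure at each $\mathfrak p \mid p$ makes $\eta$ unramified at $p$ when $p \geq 7$; and since $\rho_f, \rho_{f'}$ are unramified outside $\mathfrak n_0 p$ while a $p$-power order character is tamely ramified away from $p$, the conductor of $\eta$ divides $\mathfrak n_0$. Thus $f' = f \otimes \eta$, as claimed.

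The step I expect to be the main obstacle is the modularity input. Weight one is not a regular weight, so no standard Taylor--Wiles patching applies directly; instead one must route the argument through the Hida-theoretic $R = \mathbb{T}$ encoded in \cite{Sa10} and verify with care that $\rho$ meets every local condition — ordinarity at all $\mathfrak p \mid p$ and conductor exactly $\mathfrak n_0$ — so that it defines a genuine point of the deformation ring attached to $\{\mathcal F\}$ and the associated weight one form is classical. By contrast, the projective lifting and the vanishing of the $GL_2$-lifting obstruction over the number field $F$ are standard.
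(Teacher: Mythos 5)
Your proposal is correct and shares the paper's skeleton: construct a characteristic-zero Artin lift of $\bar{\rho}_{\mathcal F}$, feed it into Sasaki's modularity lifting theorem (Theorem \ref{theo:209}), place the resulting classical weight one form inside the community, and prove uniqueness by showing any two weight one specializations differ by a $p$-power order twist $\eta$ whose ramification at $p$ is killed by ordinarity when $p \geq 7$. The one genuinely different step is the construction of the lift. The paper notes that for $p \geq 7$ the image of $\bar{\rho}_{\mathcal F}$ in $GL_2({\mathbb{F}})$ (not just in $PGL_2({\mathbb{F}})$) has order prime to $p$, and takes the Teichm\"uller lift $\tilde{\rho}: G_F \rightarrow GL_2(W({\mathbb{F}}))$ directly; since reduction is then an isomorphism on the image, one gets for free that $\tilde{\rho}$ reduces to $\bar{\rho}_{\mathcal F}$ on the nose, that $\tilde{\rho}|_{D_{\mathfrak p}}$ inherits the direct-sum decomposition $\bar{\mathcal E}_{\mathfrak p} \oplus \bar{\mathcal D}_{\mathfrak p}$ (with distinct reductions, by $p$-distinguishedness) required by Sasaki's theorem, and that ${\rm cond}(\tilde{\rho})={\rm cond}(\bar{\rho}_{\mathcal F})={\mathfrak n}_0$. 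Your route via projective lifting, Tate's vanishing of $H^2(G_F, \overline{\mathbb{Q}}_p^\times)$, and a twist also works, but everything you relegate to ``one arranges'' --- that the twist makes the reduction equal to $\bar{\rho}_{\mathcal F}$ itself and not merely a twist of it, that the conductor comes out exactly ${\mathfrak n}_0$, and that $\rho|_{D_{\mathfrak p}}$ (the decomposition group, not just inertia) splits as two characters with distinct reductions --- is precisely what the Teichm\"uller lift trivializes, and these points must be written out since the tame-level identification and Sasaki's local hypothesis depend on them. Two smaller points: your phrase ``identifies $\rho$ with a weight one point of the Hida community'' elides a step the paper makes explicit, namely that Sasaki's theorem only yields a classical weight one form $f$ with $\rho_f \cong \tilde{\rho}$, after which one invokes Wiles' theorem (Theorem 3, p.~532 of \cite{W88}; cf. Remark \ref{rmk:106}) to obtain a Hida family through the $p$-stabilization of $f$ and uses the conductor computation to see its tame level is ${\mathfrak n}_0$; and your closing worry about the modularity input is unfounded relative to the paper, which applies Sasaki's theorem as a black box whose hypotheses are exactly the assumptions of the proposition. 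On the uniqueness side, your bound on ${\rm cond}(\eta)$ via tameness of $p$-power order characters away from $p$ is a clean alternative to the paper's determinant identity $\psi_{\mathcal G}\varepsilon' = \eta^2\psi_{\mathcal F}\varepsilon$.
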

\begin{proof}
Since $p \geq 7$ the order of the image of $\bar{\rho}_{\mathcal F}$ in $GL_2({\mathbb{F}})$ is prime to $p$. Therefore we can take the Teichm\"uller lift $\tilde{\rho}: G_F \rightarrow GL_2(W({\mathbb{F}}))$ of $\bar{\rho}_{\mathcal F}$, where $W({\mathbb{F}})$ is the ring of Witt vectors of ${\mathbb{F}}$. The reduction map $GL_2(W({\mathbb{F}})) \rightarrow GL_2({\mathbb{F}})$ induces an isomorphism on $\tilde{\rho}(G_F)$. This implies that $\tilde{\rho}$ is an Artin representation and $\tilde{\rho}(G_F)$ is a semi-simple group. As ${\mathcal F}$ is $p$-ordinary, $\bar{\rho}_{\mathcal F}|_{D_{\mathfrak p}}$ is a direct sum of the characters $\bar{\mathcal E}_{\mathfrak p}$ and $\bar{\mathcal D}_{\mathfrak p}$ for each prime ${\mathfrak p}$ lying over $p$. Therefore $\tilde{\rho}|_{D_{\mathfrak p}}$ is also a direct sum of two characters. We apply the modularity lifting theorem below to obtain a classical weight one form $f$ such that $\rho_f \cong \tilde{\rho}$: 
\begin{theorem}[Theorem 3; p.~999 of \cite{Sa10}]
\label{theo:209}
Let $p$ be an odd prime that splits completely in $F$. Let $E$ be a finite extension of ${\mathbb{Q}}_p$ with the ring of integers $O$ and the maximal ideal ${\mathfrak m}$, and $\rho: G_F \rightarrow GL_2(O)$ a continuous totally odd representation satisfying the following conditions: 
\begin{itemize}
\item $\rho$ ramifies at only finitely many primes; 
\item $\bar{\rho}=(\rho \bmod {\mathfrak m})$ is absolutely irreducible when restricted to ${\rm Gal}(\bar{F}/F(\zeta_p))$, and has a modular lifting which is potentially ordinary and potentially Barsotti-Tate at every prime of $F$ above $p$; 
\item For every prime ${\mathfrak p}$ of $F$ above $p$, the restriction $\rho|_{D_{\mathfrak p}}$ is the direct sum of $1$-dimensional characters $\chi_{{\mathfrak p}, 1}$ and $\chi_{{\mathfrak p}, 2}$ of $D_{\mathfrak p}$ such that the images of the inertia subgroup at ${\mathfrak p}$ are finite and $(\chi_{{\mathfrak p}, 1} \bmod {\mathfrak m}) \not \equiv (\chi_{{\mathfrak p}, 2} \bmod {\mathfrak m})$. 
\end{itemize}
Then there exists an embedding $\iota: E \rightarrow \overbar{{\mathbb{Q}}}_p \cong {\mathbb{C}}$ and a classical cuspidal Hilbert modular eigenform $f$ of weight one such that $\iota \circ \rho: G_F \rightarrow GL_2({\mathbb{C}})$ is isomorphic to the representation associated to $f$ by Rogawski-Tunnell \cite{RT}. 
\end{theorem}
By a theorem of Wiles (Theorem 3; p.~532 of \cite{W88}) there exists a Hida family ${\mathcal G}$ specializing to (the $p$-stabilization(s) of) $f$. The tame level of ${\mathcal G}$ is equal to the prime-to-$p$ part of the level of $f$. By our assumption ${\rm cond}(\rho_f)={\rm cond}(\tilde{\rho})={\rm cond}(\bar{\rho}_{\mathcal F})$ is equal to ${\mathfrak n}_0$. 

As for the second claim, let $f$ and $g$ be two classical weight one specializations of the community $\left\{{\mathcal F} \right\}$, say $f=\varphi_{1, \varepsilon}({\mathcal F})$ and $g=\varphi_{1, \varepsilon'}({\mathcal G})$ for members ${\mathcal F}, {\mathcal G} \in \left\{{\mathcal F} \right\}$ and $p$-power order characters $\varepsilon, \varepsilon': {\mathbf{G}} \rightarrow \overbar{\mathbb{Q}}_p ^\times$. As the projective image of $\rho_f$ and $\rho_g$ are equivalent, there exists a $p$-power order character $\eta: G_F \rightarrow \overbar{\mathbb{Q}}_p^\times$ so that $\rho_g \cong \eta \otimes \rho_f$. By determinant considerations, one can show, in a fashion similar to the proof of Theorem \ref{theo:301}, that $\eta$ is unramified outside ${\mathfrak n}_0p$. Let ${\mathfrak p}$ be a prime of $F$ lying over $p$. The relation $\rho_g \cong \eta \otimes \rho_f$ restricted to $I_{\mathfrak p}$ implies that at least one of the following holds true: 
\begin{enumerate}
\item $\eta=1, \psi_{\mathcal F}=\psi_{\mathcal G}$ and $\varepsilon'=\varepsilon$ on $I_{\mathfrak p}$; 
\item $\eta=\varepsilon^{-1}=\varepsilon'$ and $\psi_{\mathcal F}=\psi_{\mathcal G}=1$ on $I_{\mathfrak p}$. 
\end{enumerate}
Recall that $\varepsilon|_{I_{\mathfrak p}}=\varepsilon'|_{I_{\mathfrak p}}=1$ provided $p \geq 7$ (see the proof of Theorem \ref{theo:301}). Thus in both cases we have $\eta|_{I_{\mathfrak p}}=1$. Therefore $\eta$ is unramified outside ${\mathfrak n}_0$, and the conductor of $\eta$ divides ${\mathfrak n}_0$ since $\psi_{\mathcal G}\varepsilon'=\eta^2 \psi_{\mathcal F}\varepsilon$. 
\end{proof}
We end this article by making two remarks on Proposition \ref{prop:303}. 
\begin{remark}
\label{rmk:304}
\begin{enumerate}
\item In the proposition, we assumed that $p$ splits completely in $F$, $\left\{{\mathcal F} \right\}$ is $p$-distinguished, and the residual representation $\bar{\rho}_{\mathcal F}$ is absolutely irreducible when restricted to ${\rm Gal}(\overbar{F}/F(\zeta_p))$. This is due to our application of modularity lifting theorem in the course of the proof. 
\item Under the assumptions of Proposition \ref{prop:303}, the community $\left\{{\mathcal F} \right\}$ has a unique classical weight one specialization if the order of the narrow ray class group ${\rm Cl}_F({\mathfrak n}_0)$ of $F$ of modulus ${\mathfrak n}_0$ is prime to $p$. 
\end{enumerate}
\end{remark}

\section*{Acknowledgements}
The author would like to express her gratitude to her supervisor, Nobuo Tsuzuki, for his helpful advice and unceasing encouragement. She also thanks Takuya Yamauchi for useful information, suggestions and discussions. The author is partially supported by Japan Society for the Promotion of Science (JSPS), Grant-in-Aid for Scientific Research for JSPS fellows (15J00944). 


\end{document}